\documentclass[12pt]{article}

\usepackage{tikz}
\usepackage{amsmath,amsthm,amssymb,xcolor,multicol,url}


\theoremstyle{plain}
\newtheorem{theorem}{Theorem}
\newtheorem{lemma}[theorem]{Lemma}
\newtheorem{corollary}[theorem]{Corollary}
\newtheorem{proposition}[theorem]{Proposition}

\theoremstyle{definition}
\newtheorem{definition}[theorem]{Definition}
\newtheorem{example}[theorem]{Example}

\newtheorem{problem}[theorem]{Problem}

\theoremstyle{remark}
\newtheorem{remark}[theorem]{Remark}


\newcommand{\nilcox}{\mathbb{A}}
\newcommand{\nilcoxmon}{\hat{\mathbb{A}}}
\newcommand{\core}{\mathfrak{c}}
\newcommand{\sh}{\operatorname{sh}}
\newcommand{\len}{\operatorname{len}}
\newcommand{\ZZ}{\mathbb{Z}}

\newcommand{\LD}{\operatorname{LD}}
\newcommand{\LI}{\operatorname{LI}}
\newcommand{\RD}{\operatorname{RD}}
\newcommand{\RI}{\operatorname{RI}}

\newcommand{\LCRD}{\operatorname{CRD}}

\newcommand{\tp}{\operatorname{TOP}}
\newcommand{\dwn}{\operatorname{DOWN}}
\newcommand{\chrm}{\operatorname{CHARM}}

\newcommand{\AS}{\tilde{S}}

\usepackage{ifthen}
\newboolean{draft}
\setboolean{draft}{true}
\ifdraft
\newcommand{\TODO}[2][To do: ]{\textcolor{red}{\textbf{#1#2}}}
\else
\newcommand{\TODO}[2][]{}
\fi

\begin{document}
\title{\bf Canonical Decompositions of Affine Permutations, Affine Codes,\\ and Split $k$-Schur Functions}
\author{Tom Denton}

\author{Tom Denton\thanks{Supported by York University and the Fields Institute.}\\
\small Department of Mathematics\\[-0.8ex]
\small York University\\[-0.8ex] 
\small Toronto, Ontario, Canada\\
\small\tt sdenton4@gmail.com\\
}

\date{Oct 8, 2012\\
\small Mathematics Subject Classifications: 05E05, 05E15}

\maketitle

{\abstract{We develop a new perspective on the unique maximal decomposition of an arbitrary affine permutation into a product of cyclically decreasing elements, implicit in work of Thomas Lam \cite{Lam06affinestanley}.  This decomposition is closely related to the affine code, which generalizes the $k$-bounded partition associated to Grassmannian elements.  We also prove that the affine code readily encodes a number of basic combinatorial properties of an affine permutation.  As an application, we prove a new special case of the Littlewood-Richardson Rule for $k$-Schur functions, using the canonical decomposition to control for which permutations appear in the expansion of the $k$-Schur function in noncommuting variables over the affine nil-Coxeter algebra.}}

\tableofcontents

\section{Introduction}

The affine permutation group $\AS_{k+1}$ was originally described by Lusztig~\cite{lusztig83} as a combinatorial realization of the affine Weyl group of type $A_k^{(1)}$.  The affine permutations have since been extensively studied; a very good overview of the basic results may be found in~\cite{Bjorner_Brenti.2005}.  The group $\AS_{k+1}$ is important for a variety of reasons; for example, new results on $\AS_{k+1}$ often generalize or give new results in the classical symmetric group.  Additionally, $\AS_{k+1}$ is the affine Weyl group of type $A_{k}^{(1)}$, and new combinatorics in the affine symmetric group suggest new directions of exploration for general affine Weyl groups.  Finally, the affine nil-Coxeter algebra, which is closely related to the affine permutation group, has proven very useful in the study of symmetric functions, via the construction of Schur (and $k$-Schur) functions in non-commuting variables~\cite{FominGreene98}\cite{Lam06affinestanley}.

As our primary objective, we develop new machinery for finding the unique maximal decomposition of an arbitrary affine permutation.  This may be interpreted as a canonical reduced decomposition of each affine permutation.  This composition is encoded in the \emph{affine code}, or \emph{$k$-code}, which may be interpreted as a weak composition with $k+1$ parts, at least one of which is zero\footnote{The affine code generalizes both the inversion vector and the Lehmer code of a classical permutation.}.  We interpret the diagram of an affine code as living on a cylinder.  (Before the connection to the affine code was noticed, we called this object a $k$-castle, because when the affine code's Ferrer's diagram is drawn on a cylinder, it resembles the ramparts of a castle.  The requirement that one part of the composition is zero means that the castle always has a ``gate.''  See Figure~\ref{fig.affineTableau}.)  One may then quickly determine whether two affine permutations given by reduced words are equal by putting each in their canonical form: Thus, we provide an alternative solution of the word problem for the affine symmetric group.  The affine code readily yields other useful information about the affine permutation, including its (right) descent set and length.  Furthermore, the Dynkin diagram automorphism on $\AS_{k+1}$ may be realized by simply rotating the affine code.

While in review, it was noticed that the $k$-code coincides with the affine code, and that the unique maximal decomposition is implicit in the work of Lam~\cite{Lam06affinestanley}[Theorem 13].  Our work here gives an alternative proof of the existence and uniqueness of the maximal decomposition, as well as introducing the perspective of the two-row moves on cyclic decompositions of an element.  This added perspective is imminently useful when considering combinatorial problems arising in the affine symmetric group; this utility is demonstrated in Section~\ref{sec:lwrule}, and will be further demonstrated in future work.

We furthermore describe an insertion algorithm on affine codes, which gives rise to the notion of a set of standard recording tableau in bijection with the set of reduced words for an affine permutation with affine code $\alpha$.  We also generalize a number of constructions that arise in the study of $k$-Schur functions (described below) to general affine permutations.  In particular, the notions of $k$-conjugation and weak strip appear and generalize naturally in the study of affine codes.

Initially we developed this machinery in order to prove a special case of the $k$-Littlewood-Richardson rule describing the multiplication of $k$-Schur functions.  The $k$-Schur functions $s_\lambda^{(k)}$ are indexed by $k$-bounded partitions, and give a basis for the ring $\Lambda^{(k)}$ defined as the algebraic span of the complete homogeneous functions $h_i$ with $i\leq k$.

The $k$-Schur functions were originally defined combinatorially in terms of $k$-atoms, and conjecturally provide a positive decomposition of the Macdonald polynomials~\cite{LLM2003}.  Since their original appearance, these functions have attracted much attention, but many basic properties remain elusive.  As of this writing, the author estimates that there are at least five different definitions, all of which are conjecturally equivalent.  A good overview of the state-of-the-art in the study of $k$-Schur functions, including many of the various definitions, is~\cite{kschurprimer}.

One definition of the $k$-Schur functions is given by the $k$-Pieri rule.  The $k$-bounded partitions are are in bijection with $(k+1)$-cores and Grassmannian affine permutations.    Lam demonstrated that the cyclically decreasing elements in the affine nil-Coxeter algebra commute and satisfy the same multiplication as the $h_i$'s~\cite{Lam06affinestanley}.  As such, the $k$-Pieri rule may be used to construct elements in the affine nil-Coxeter algebra which mimic the $k$-Schur functions.  This is the realization of the $k$-Schur functions we use throughout this paper.

\begin{definition}
\label{def:splits}
Given a shape $\mu$, the \emph{$k$-boundary} $\partial_k(\mu)$ of $\mu$ is the skew shape obtained by removing all boxes with hook $>k$.
A skew shape is \emph{connected} if any box may be reached from any other box by a sequence of vertical and horizontal steps.
A $k+1$-core $\mu$ \emph{splits} if the $k$-boundary $\partial_k(\mu)$ is not connected.  If $\mu$ splits, then each connected component of $\partial_k(\mu)$ is the boundary of some $k+1$ core $\rho_i$.  These cores $\rho_i$ are the \emph{components} of $\mu$.  Any collection of diagonally-stacked connected components may similarly be associated to a core; such a collection we call a \emph{factor}, in anticipation of the main result.
\end{definition}

Our main application is the following special case of the $k$-Littlewood-Richardson rule, which appears as Theorem~\ref{thm:mainResult}:
\begin{theorem}
Suppose $\mu$ splits into components $\mu_i$.  Then 
\[
s_\mu^{(k)} = \prod s_{\mu_i}^{(k)}.
\]
\end{theorem}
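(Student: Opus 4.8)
The plan is to work inside the affine nil-Coxeter algebra $\nilcox$, where, following Lam, each $s_\mu^{(k)}$ is realized as the image of the ordinary symmetric-function expression for $s_\mu^{(k)}$ under the substitution $h_i \mapsto \mathbf{h}_i$, with $\mathbf{h}_i = \sum w$ summed over cyclically decreasing $w$ of length $i$. Since the $\mathbf{h}_i$ commute, it suffices to show that the combinatorial data defining $s_\mu^{(k)}$ factors along the components. Concretely, I would first translate the hypothesis that $\partial_k(\mu)$ is disconnected into a statement about residues: the components $\mu_i$ occupy pairwise disjoint arcs of residues in $\ZZ/(k+1)\ZZ$, and consecutive arcs are separated by at least one residue absent from the $k$-boundary. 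This separation is precisely what the affine code records --- the gate (the zero part) together with the gaps between the components' segments --- so the affine-code perspective developed earlier yields the arc decomposition directly.

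Next I would prove the key structural lemma: any cyclically decreasing element $w$ whose support lies in the union of the arcs factors uniquely as a product $w = \prod_i w_i$ of cyclically decreasing elements $w_i$ supported on the individual arcs, and, because distinct arcs use disjoint and pairwise non-adjacent generators, the factors $w_i$ commute and the factorization is length-additive. Summing over lengths, this shows that the restriction of the generating series $\sum_i \mathbf{h}_i t^i$ to the relevant region factors as a commuting product of the analogous series over the arcs. Equivalently, the weak strips added by the $k$-Pieri rule to $\mu$ are in bijection with tuples of weak strips added independently to the components $\mu_i$, so the $k$-Pieri rule is multiplicative across components.

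Finally I would assemble these facts by induction on $\len(w_\mu)$ (equivalently on the number of boxes), using the $k$-Pieri rule as the recursive characterization of $s_\mu^{(k)}$: adding a weak strip to $\mu$ corresponds to independently adding strips to each component, and the commuting factorization of cyclically decreasing elements lets $\prod_i s_{\mu_i}^{(k)}$ reproduce the $k$-Pieri recursion for $s_\mu^{(k)}$, giving $s_\mu^{(k)} = \prod_i s_{\mu_i}^{(k)}$. I expect the main obstacle to be ruling out cyclically decreasing elements that wrap around the cylinder and bridge two components: one must show that the gaps coming from the disconnectedness of $\partial_k(\mu)$ are genuine and are never closed up by a cyclic rotation, so that no such bridging element contributes, and that the canonical decomposition of $w_\mu$ respects the arc boundaries. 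Verifying that the weak-strip count is exactly multiplicative, with no cross terms at the gate, is the crux, and is where controlling which permutations appear via the canonical decomposition does the real work.
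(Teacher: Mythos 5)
Your proposal founders at its very first step: the translation of the splitting condition into ``the components occupy pairwise disjoint arcs of residues in $\ZZ/(k+1)\ZZ$'' is false, and everything downstream (the commuting factorization of cyclically decreasing elements, the claim that the $k$-Pieri rule is multiplicative with no cross terms) depends on it. Take the paper's own example: the $5$-core $(6,3,3,1,1,1)$ with $k=4$ splits into $(1,1,1)$, $(2,2)$, $(3)$, whose $k$-boundary boxes carry residue sets $\{0,1,2\}$, $\{4,0,1\}$, $\{3,4,0\}$ respectively --- these overlap heavily (residue $0$ occurs in all three components). In fact the paper proves that splitting forces the \emph{opposite} of disjointness: if $\core(\lambda)$ splits into $\core(\mu)$ and $\core(\nu)$, then $\mu^{(k)}_i+\nu_j\geq k+1$ for all $i,j$, so the supports of the relevant cyclic elements are so large that they must intersect (or at least jointly cover all of $I$). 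There is no ``gap'' of unused residues separating the components, no gate to protect, and the generators belonging to different components do not commute. Consequently your key structural lemma cannot be proved, and the bijection between weak strips on $\mu$ and tuples of weak strips on the $\mu_i$ does not hold at the level you need it.

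The paper's actual argument is built precisely on this large-overlap inequality rather than on disjointness. One component is realized by cyclically \emph{increasing} elements and the other by cyclically \emph{decreasing} ones, and instead of commutation one has the near-commutation relation of Corollary~\ref{cor:fatMove}, $u_Bd_A=d_{A+1}u_{B+1}$, in which the supports are shifted by the Dynkin diagram automorphism. The engine is Lemma~\ref{lem:mainMachine}: when $|A_j|+|B_i|\geq k+1$ for all $i,j$, the product $u_{\vec B}d_{\vec A}$ is $i$-dominant if and only if $d_{\vec A}$ is $i$-dominant and $u_{\vec B}$ is $(i-n)$-dominant. This is then combined with the fact that $s_\nu^{(k)}$ has a unique $0$-dominant summand $a_\nu$, that any $0$-dominant term of the product must have the form $x\,a_\nu$ with $x\vdash a_\nu$, and with an induction on the parts of $\mu^{(k)}$ using the Pieri rule, dominance order, and coefficient bookkeeping (Claims 1--3 in Lemma~\ref{lem:mainResult}) to show the product $s_\mu^{(k)}s_\nu^{(k)}$ has exactly one $0$-dominant summand. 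Note that even the resulting factorization of the Grassmannian element $w_\lambda$ is not $w_\mu\cdot w_\nu$ with independent supports: the left factor is the $(k+1-n)$-dominant summand of $s_\mu^{(k)}$, i.e.\ a Dynkin-rotated element, which is exactly the index-shifting phenomenon your disjoint-arc picture erases. To repair your proof you would have to abandon the disjointness premise entirely and replace commutation by these shifted relations, at which point you are reconstructing the paper's argument.
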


\begin{example}
Consider the $5$-core $(6,3,3,1,1,1)$, associated to the $4$-bounded partition $(3,2,2,1,1,1)$:

  \begin{center}
  \includegraphics[scale=1]{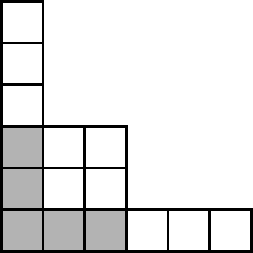}
  \end{center}

The $5$-boundary is in white, while the non-boundary boxes are shaded grey.  The boundary splits into three connected components, $(1,1,1)$, $(2,2)$, and $(3)$.  Then the theorem states that:
\[
s_{(6,3,3,1,1,1)}^{(4)} = s_{(3)}^{(4)} s_{(2,2)}^{(4)} s_{(1,1,1)}^{(4)}.
\]
\end{example}

This special case of the $k$-Schur Littlewood Richardson rule is similar in flavor to one proven by Lapointe and Morse~\cite{lapointeMorse2007}.  Their special case, the $k$-rectangle rule, involves multiplication of a $k$-Schur function indexed by a rectangle $R$ with maximal hook $(k+1)$ by an arbitrary $k$-Schur function $\lambda$.  In this case:
\[
s_R^{(k)}s_\lambda^{(k)} = s_{R\cup \lambda}^{(k)},
\]
where $R\cup \lambda$ is the partition obtained by stacking the Ferrer's diagrams of $R$ and $\lambda$ and then ``down-justifying''\footnote{Or ``up-justifying,'' if you prefer the English notation for partitions.} the resulting shape to obtain a $k$ bounded partition.  
Given the $k$-rectangle rule, the multiplication of the $k$-Schur functions for a fixed $k$ is then fully determined by the multiplication of the $k$-Schur functions indexed by shapes strictly contained within a $k$-rectangle.  The $k$-rectangle rule was given a combinatorial interpretation in the affine nil-Coxeter setting in~\cite{bbtz2011}.

The splitting condition we consider here is distinct from the $k$-rectangle rule, and provides some products of $k$-Schur functions contained strictly within a $k$-rectangle, and thus advances our overall understanding of the $k$-Littlewood-Richardson rule.

\subsection{Further Directions.}

Our results on affine codes suggest a number of questions for further exploration.  In particular, we expect that our perspective will be helpful in problems relating to reduced decompositions of affine permutations, especially those relating to the affine Stanley symmetric functions $\mathfrak{S}_x$, originally studied in~\cite{Lam06affinestanley}.  The affine Stanley symmetric function may be defined as a sum over decompositions of an affine permutation into a product of cyclically decreasing elements; our framework gives a natural way to relate these various decompositions, which we will explore in further work.  The affine codes may also be helpful in the enumeration of reduced words for either classical or affine permutations, a problem which has proven especially difficult.  

As noted in~\cite{Lam06affinestanley}, the problem of expanding the $k$-Schur functions over the nil-Coxeter algebra $\nilcox_k$ is equivalent to finding the $k$-Littlewood-Richardson coefficients.  A number of the supporting results in this work determine coefficients in the expansion of the $k$-Schur function for special elements, using the affine code constructions.  We expect that more information about the coefficients in the expansion may be gleaned from further study, which in turn will illuminate the $k$-Littlewood-Richardson coefficients.

\subsection{Overview}
In Section~\ref{sec:background} we review basic concepts from the literature and establish notation that will be used throughout the paper.  This includes a review of affine permutations, the affine nil-Coxeter algebra, cyclically decreasing elements, and the expression of the $k$-Schur functions in non-commuting variables over the affine nil-Coxeter algebra..

The bulk of the paper is in Section~\ref{sec:kcastles}.  In this section, we construct the bijection(s) between affine permutations and $k$-codes, via maximal decompositions.
In Subsection~\ref{ssec:maxCyclic}, we prove that every affine permutation has a unique maximal decomposition as a product of cyclically decreasing elements.  This provides the first main result of the paper, Theorem~\ref{thm:maxDecomposition}.  The proof of the theorem is constructive, and provides a fast algorithm for computing the maximal decomposition.

In Subsection~\ref{ssec:maxMoves}, we establish `moves' between various reduced decompositions of an affine permutation into cyclically decreasing elements.  These allow us to prove Proposition~\ref{prop:maxCyclic}, which establishes that the maximal decomposition of any affine permutation into cyclically decreasing elements satisfies a `shifted containment' property, which is key in the identification of the decomposition with a weak composition.

In Subsection~\ref{ssec:insertion}, we reinterpret the maximizing moves to establish an insertion algorithm on $k$-codes.  This algorithm is reversible, which allows us to associate a set of recording tableaux to each $k$-code, and thus to each affine permutation.  By construction, these recording tableaux are in bijection with reduced words for the affine permutation.  This is the content of Theorem~\ref{thm:redWords}.

In Subsection~\ref{ssec:bijection}, we prove our main result, Theorem~\ref{thm:compTabBijection}, which establishes the bijection between $k$-codes and affine permutations.  We also establish a relationship between descents of $k$-codes and descents of the affine permutations they correspond to.

In Subsection~\ref{ssec:relations}, it is observed that there are actually four different bijections between affine permutations and $k$-codes, according to different choices for the maximal decomposition:  One can build either a decomposition into cyclically decreasing or increasing elements, from the right side or the left side.  Here we investigate the relationships between the four $k$-codes assigned to a given affine permutation.  The increasing and decreasing decompositions are related by a generalization of the $k$-conjugate, a vital construction on $k$-bounded partitions. 
We also note that the $k$-codes of the left and right decreasing decompositions are related by a permutation (Proposition~\ref{prop:colPermutation}).

We then focus on Grassmannian elements in Subsection~\ref{ssec:grassmannian}.  These are affine permutations with right descent set $\{0\}$ or $\emptyset$.  They are of particular interest because they index the $k$-Schur functions: Grassmannian elements are in bijection with $k$-bounded partitions, which may be interpreted as a $k$-code $\alpha$ with only one descent at $\alpha_0$.  We show that the usual $k$-conjugate of $k$-bounded partitions corresponds to switching between two maximal decompositions of the associated Grassmannian element (Proposition~\ref{prop:kconjagrees}).  This allows us to define the $k$-conjugate on arbitrary affine permutations.

The $k$-Pieri rule is used to define the $k$-Schur functions, and an important characterization of the Pieri rule is by weak horizontal strips.  In particular, consider $k$-bounded partitions $\lambda\subset \mu$, and let the $k$-conjugate of $\lambda$ and $\mu$ be $\lambda'$ and $\mu'$ respectively.  (The $k$-conjugate is defined in Section~\ref{ssec:kSchurFunctions}.)  Then we say that the skew shape $\mu/\lambda$ is a \emph{weak strip} if no column of $\mu/\lambda$ contains two boxes, and no row of $\mu'/\lambda'$ contains two boxes.  Suppose the affine permutations associated to $\lambda$ and $\mu$ are $x$ and $y$ respectively.  Indeed, $\mu/\lambda$ is a weak strip if and only if there exists a cyclically decreasing element $d_A$ such that $y=d_Ax$.  

In Subsection~\ref{ssec:genpieri}, we generalize the combinatorial Pieri rule by showing that multiplying any affine permutation by a cyclically decreasing element adds at most one box to each row of its $k$-code, while multiplication by a cyclically increasing element adds at most one box to each column of its $k$-code.

Section~\ref{sec:incrdecr} investigates the results of multiplying cyclically increasing and decreasing elements together.  In particular, we find near-commutation rules: for cyclically increasing and decreasing elements $u_B$ and $d_A$, there exist $A', B'$ such that $u_Bd_A=d_{A'}u_{B'}$.  The main result is Proposition~\ref{prop:updowndownup}.

Finally, we use the machinery of the previous sections to prove our special case of the $k$-Littlewood Richardson rule in Section~\ref{sec:lwrule}.  The main result is Theorem~\ref{thm:mainResult}.

\subsection{Acknowledgements}

I would like to thank the Fields Institute for providing space and hosting our weekly Algebraic Combinatorics seminar, where many of the ideas in this paper were discussed, argued, and strengthened.  Many thanks are also due to Chris Berg, Nantel Bergeron, Zhi Chen, Anne Schilling, Luis Serrano, Nicolas Thi\'ery and Mike Zabrocki for helpful comments and conversation.  Funding during this research was provided by Nantel Bergeron and York University.  Thanks are also due to the LACIM group at UQAM for their repeated hospitality.

One reviewer of the article patiently indicated many small errors in the text and supplied many generally helpful comments, for which I am grateful.  Of course, any remaining errors are my own.

Finally, much of the work in this project made extensive use of the Sage computer algebra system and the Sage-Combinat project~\cite{Sage}\cite{Sage-Combinat}, and was particularly reliant on code contributions from Chris Berg, Franco Saliola, Anne Schilling, and Mike Zabrocki.

\begin{figure}
  \begin{center}
  \includegraphics[scale=0.75]{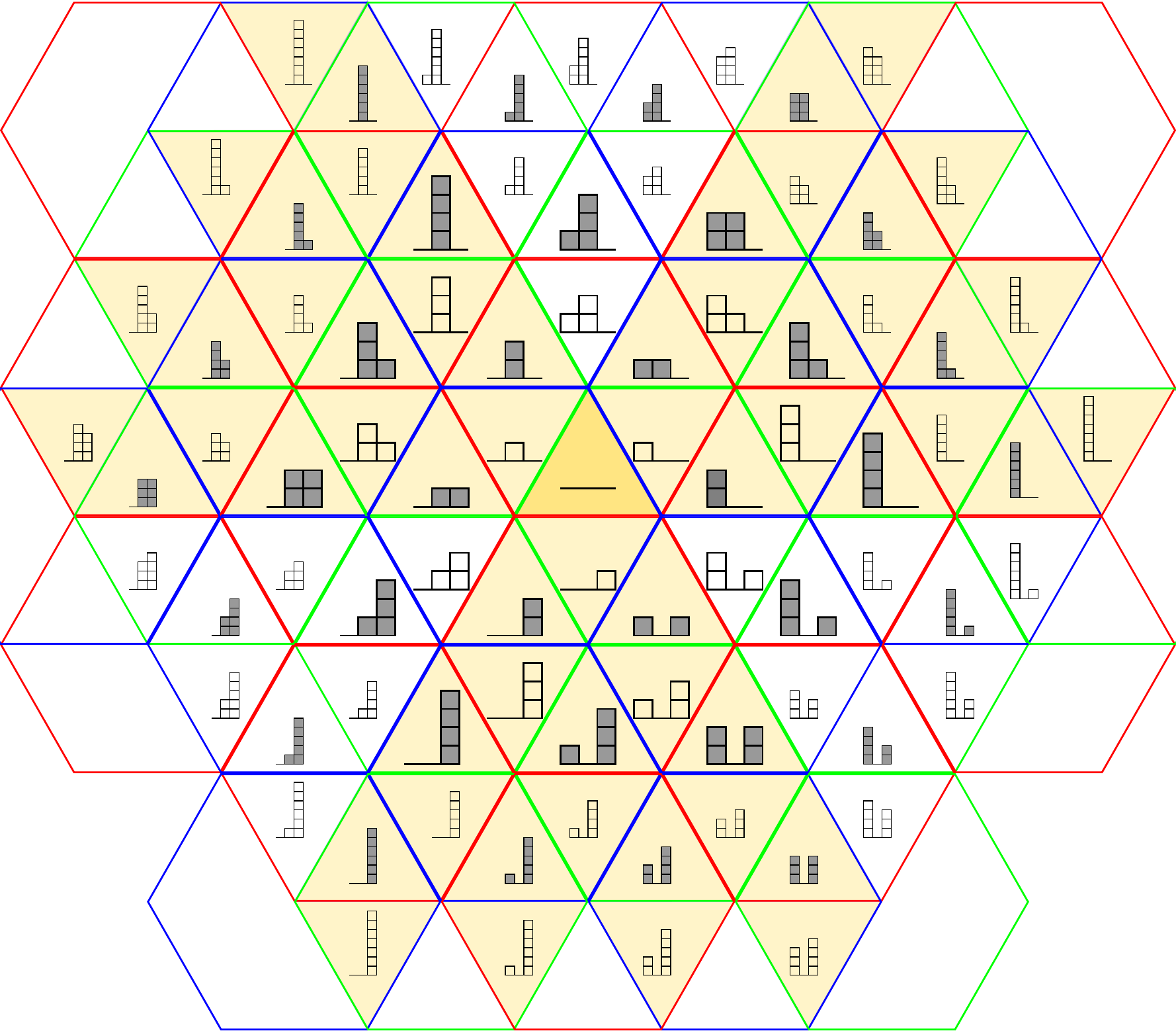}
  \end{center}
  \caption{All $2$-castles with $\leq 7$ boxes, drawn in the alcoves corresponding to the various affine permutations in $\AS_3$.  The colored walls of the alcoves indicate which simple transposition is used to cross that wall.  Blue is $s_0$, green is $s_1$, and red is $s_2$.  The orange-shaded regions indicate the three dominant cones; in particular, the $0$-dominant elements are in the cone which opens to the upper-right.  The shading in the boxes of the castles themselves corresponds to length; odd-length permutations have no shading, while even-length permutations are shaded gray.}
  \label{fig.2-castles-5}
\end{figure}

\begin{figure}
  \begin{center}
  \includegraphics[scale=0.75]{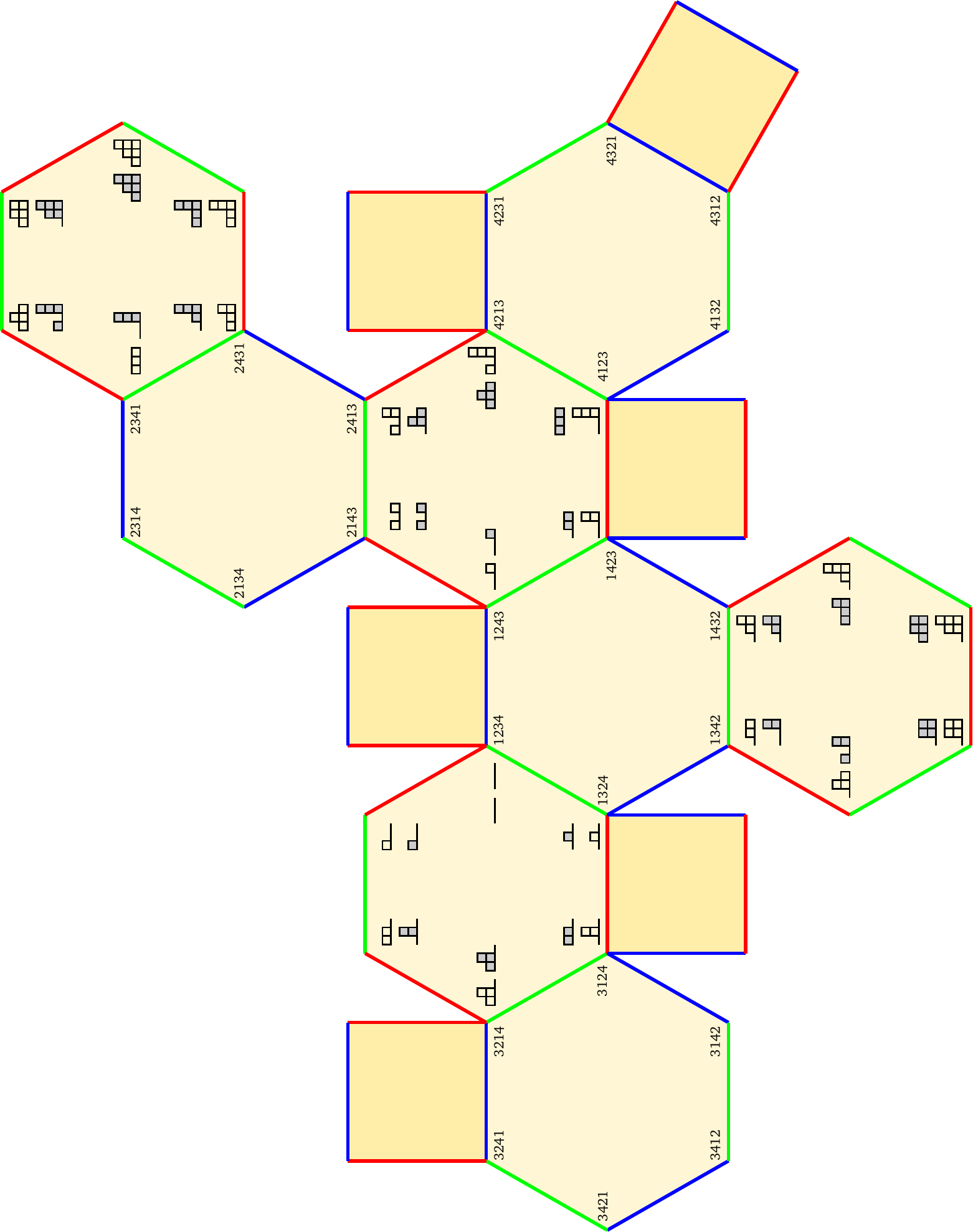}
  \end{center}
  \caption{DIY Permutahedron:  Cut out and glue together to get a 3-dimensional representation of this paper's results, restricted to the classical symmetric group $S_4$.  For best results, print large and in color on heavy card-stock.  Each vertex is labeled by a permutation, and the $k$-codes associated to the right decreasing (in white) and increasing (gray) decompositions of the permutation.  Edges correspond to right multiplication by a simple transposition: red for $s_1$, green for $s_2$ and blue for $s_3$.  You can use the blank square faces to mark your own favorite permutation statistics!}
  \label{fig.permutahedronModel}
\end{figure}

\section{Background and Definitions}
\label{sec:background}

In this section we review background material and fix notations for the remainder of the paper.

\subsection{The Affine Nilcoxeter Algebra and Affine Permutations.}

We begin by defining the affine nil-Coxeter algebra, and reviewing some basic facts and definitions relating to affine permutations.  Good references on affine Coxeter groups in general and the affine symmetric group in particular include~\cite{humphreys90},
\cite{Bjorner_Brenti.2005}.

Let $k$ be a positive integer.  Let $I$ indicate the index set $\ZZ_{k+1}=\{0, 1, \ldots, k\}$, which correspond to nodes in the Dynkin diagram of type $A_k^{(1)}$.  Indices from $I$ are thus always considered modulo $k+1$.  The \emph{Dynkin diagram} of type $A_k^{(1)}$ is the cyclic graph with vertices labeled by elements of $I$, and an edge connecting each pair of indices $i$ and $i+1$.
For brevity, we let $[p, q] := \{p, p+1, \ldots, q-1, q\}\subsetneq I$ for $p\neq q-1$.  (For example, with $k=5$, the set $[4,2]=\{4,5,0,1,2\}$.)  We call a subset $A\subsetneq I$ \emph{connected} if the corresponding subgraph of the Dynkin diagram is connected; \emph{i.e.,} $A=[i,l]$ for some $i, l$.  A connected component of an arbitrary $A\subsetneq I$ is a maximal connected subset of $A$.

\begin{definition}
The \emph{affine nil-Coxeter monoid} $\nilcoxmon_k$ is generated by the alphabet $\{a_i \mid i\in I\}$, subject to the relations:
\begin{itemize}
\item $a_i^2=0$,
\item $a_ia_j=a_ja_i$ for all $j>i$ with $j-i>1$, 
\item $a_ia_{i+1}a_i=a_{i+1}a_ia_{i+1}$ for all $i$, and
\item $x0=0x=0$ for all $x\in \nilcoxmon_k$.
\end{itemize}
The affine nil-Coxeter algebra $\nilcox_k$ is the monoid algebra of $\nilcoxmon_k$.  The \emph{classical nil-Coxeter monoid} $\nilcoxmon_k^{0}$ and corresponding monoid algebra $\nilcox_k^{0}$ are obtained as a (parabolic) quotient of $\nilcox_k$ by evaluating $a_0=0$.

We will also occasionally use the \emph{affine symmetric group} $\AS_k$ generated by the alphabet $\{s_i \mid i\in I\}$, subject to the relations:
\begin{itemize}
\item $s_i^2=1$,
\item $s_is_j=s_js_i$ for all $j>i$ with $j-i>1$, and
\item $s_is_{i+1}s_i=s_{i+1}s_is_{i+1}$ for all $i$.
\end{itemize}
\end{definition}

Elements of the affine symmetric group may be considered as (affine) permutations $x: \ZZ\rightarrow \ZZ$ subject to the additional requirements that:
\begin{itemize}
\item $\sum_{i=1}^{k+1} x(i) = \binom{k+2}{2} $, and
\item $x(i+k+1) = x(i)+k+1$.
\end{itemize}
Any affine permutation $x$ is then completely specified by its \emph{window notation}, given by the vector $(x(1), x(2), \ldots, x(k+1))$.  Affine permutations may be considered as bi-infinite sequences, setting $x_l:=x(l)$.
These affine permutations are in bijection with non-zero elements of the nil-Coxeter monoid.  

The generators $s_i$ may be considered as the simple transpositions exchanging $m(k+1)+i$ with $m(k+1)+i+1$ for every $m\in \ZZ$.  The set of affine permutations admit a \emph{left action} and a \emph{right action} by the generators $s_i$.  Considering $x$ as a bi-infinite sequence $(\ldots, x(-1), x(0), x(1), \ldots)$, we may consider the left action of the generators as an action on values (exchanging $m(k+1)+i$ with $m(k+1)+i+1$ for every $m\in \ZZ$), while the right action is on positions (exchanging the values in positions $m(k+1)+i$ and $m(k+1)+i+1$ for every $m\in \ZZ$).    

A \emph{reduced word} or \emph{reduced expression} for $x$ is minimal length sequence $(w_1, \ldots, w_l)$ with $w_l\in I$ such that $x=s_{w_1}\cdots s_{w_l}$.  The number $l$ is the \emph{length} of $x$, which we denote $\len(X)$.  It is a consequence of basic Coxeter theory that an expression is reduced if and only if $a_{w_1}\cdots a_{w_l}\neq 0$.  We mainly consider affine permutations as elements of the nil-Coxeter monoid, partially because this is the natural setting to work in for the $k$-Schur functions, and partially to avoid worrying about whether a given expression is reduced.

To save space, we will often write words in $\nilcox_k$ as a subscript: for example, we write $a_1a_2a_1$ as $a_{1,2,1}$.

Let $x$ be an affine permutation.  We recall the \emph{set of right descents} $D_R(x)\subsetneq I$ of an element $x$.  We say that $x$ has a \emph{right descent} at $i\in I$, and write $i\in D_R(x)$,
\begin{itemize}
\item $x(i)>x(i+1)$, 
\item $x$ has a reduced word ending with the generator $s_i$.
\end{itemize}
Likewise, we define the left descents $D_L(x)\subsetneq I$.  Recall that $x$ has a \emph{left descent} at $i\in I$, and $i\in D_L(x)$ if either of the following two equivalent statements hold:
\begin{itemize}
\item $i$ appears to the right of $i+1$ in $x$ considered as a bi-infinite sequence,
\item $x$ has a reduced word beginning with the generator $s_i$.
\end{itemize}
Note that for any $x$, $D_R(x)\neq I$.  (If $D_R(x)=I$, then $x$ would be a longest element in $\AS_k$.  But such elements do not exist in affine Coxeter groups for a variety of reasons.~\cite{humphreys90})

In Figure~\ref{fig.2-castles-5}, illustrating the bijection between $2$-castles and affine permutations in $\AS_3$, we make use of the \emph{alcove model} for affine permutations, for which we refer the unfamiliar reader to~\cite{humphreys90}.  In short, each triangle in the picture is an `alcove,' corresponding to a particular affine permutation.  Crossing a wall of an alcove to reach an adjacent alcove corresponds to multiplication by a simple transposition.

\begin{lemma}[Extended Braid Relation]
\label{lem:extendedBraidRelation}
For any set $[i,j] \subsetneq I$, we have:
\[
a_{i, i+1, \ldots, j-1, j, j-1, \ldots, i+1, i} = a_{j, j-1, \ldots, i+1, i, i+1, \ldots, j-1, j}.
\]
\end{lemma}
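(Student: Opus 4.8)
The plan is to prove the identity by induction on the length $n = j-i$ of the interval, relying only on the braid relation $a_la_{l+1}a_l = a_{l+1}a_la_{l+1}$ and the commutation relation $a_pa_q = a_qa_p$ for non-adjacent indices. Note in particular that the quadratic relation $a_l^2 = 0$ is never invoked, so this is really a word identity in the braid and commutation moves and could equally be stated in $\AS_k$. Write $L$ for the left-hand side $a_ia_{i+1}\cdots a_{j-1}a_ja_{j-1}\cdots a_{i+1}a_i$ and $R$ for the right-hand side $a_ja_{j-1}\cdots a_{i+1}a_ia_{i+1}\cdots a_{j-1}a_j$. The base case $n=0$ is trivial ($L=R=a_i$), and the case $n=1$ is precisely the braid relation $a_ia_{i+1}a_i = a_{i+1}a_ia_{i+1}$.

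For the inductive step, assume the identity for all intervals of length $n-1$ and take $[i,j]$ with $j-i = n \ge 2$. The idea is to manufacture a second copy of the peak generator $a_j$. First apply the braid relation at the peak, rewriting the central subword $a_{j-1}a_ja_{j-1}$ as $a_ja_{j-1}a_j$, so that
\[
L = (a_i\cdots a_{j-2})\,a_j\,a_{j-1}\,a_j\,(a_{j-2}\cdots a_i).
\]
Since $[i,j]$ is a proper arc of the cyclic Dynkin diagram, $a_j$ commutes with every $a_p$ for $i\le p\le j-2$. Commuting the left copy of $a_j$ leftward past $a_{j-2},\ldots,a_i$ and the right copy rightward past $a_{j-2},\ldots,a_i$ gives
\[
L = a_j\,\big(a_i\cdots a_{j-2}\,a_{j-1}\,a_{j-2}\cdots a_i\big)\,a_j,
\]
where the bracketed word is exactly the left-hand side of the extended braid relation for the shorter interval $[i,j-1]$. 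By the inductive hypothesis this equals the corresponding right-hand side $a_{j-1}\cdots a_{i+1}a_ia_{i+1}\cdots a_{j-1}$, and substituting back yields $a_ja_{j-1}\cdots a_ia_{i+1}\cdots a_{j-1}a_j = R$, completing the induction.

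The one genuine subtlety — and the step I expect to be the main obstacle, or at least the place demanding care — is justifying that $a_j$ commutes with each $a_p$ for $i\le p\le j-2$. In finite type $A$ this is immediate from $|j-p|\ge 2$, but in the affine (cyclic) setting the commutation relation can fail for indices that are two apart in the linear order yet adjacent around the cycle. Such a coincidence would require $j+1\equiv p \pmod{k+1}$ for some $p$ in range, i.e. the arc wrapping all the way around; the hypothesis $[i,j]\subsetneq I$ guarantees $j+1\pmod{k+1}\notin[i,j]$, so no such adjacency arises and every commutation used is legitimate. I would record this adjacency check as a one-line preliminary remark and then run the induction as above, the remaining manipulations being routine applications of the two defining relations.
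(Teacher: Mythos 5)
Your proof is correct and takes essentially the same approach as the paper, whose entire proof is the one-line assertion that the identity ``follows from repeated application of the braid relation'' --- your induction (braid move at the peak, commute the two copies of $a_j$ outward, invoke the shorter interval) is precisely what that assertion compresses. Your preliminary check that $[i,j]\subsetneq I$ rules out the cyclic adjacency $j+1\equiv p$ is a genuine point of care that the paper leaves implicit, and it is handled correctly.
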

\begin{proof}
This follows from repeated application of the braid relation.
\end{proof}

The Dynkin diagram of type $A_k^{(1)}$ admits a cyclic symmetry, which descends to an algebra automorphism of $\nilcox_k$.
\begin{definition}
The Dynkin Diagram automorphism $\Psi: \nilcox_k \rightarrow \nilcox_k$ is defined by its action on the generators:
\[
\Psi(a_i) = a_{i+1}.
\]
\end{definition}
We observe that $\Psi^{(k+1)}$ is the identity.

\subsection{Cyclic Elements in $\nilcox_k$.}

\begin{definition}
Given a subset $A \subsetneq I$ with $|A|=n$ we define the cyclically decreasing element $d_A$ ($d$ for `down') to be the product $d_A:=a_{i_1}\cdots a_{i_n}$ for $i_l\in A$, where if $j, j-1\in A$ then $j$ appears to the left of $j-1$ in any reduced word for $d_A$.  The cyclically increasing element $u_A$ ($u$ for `up') is defined similarly, where if $j$ and $j-1\in A$ then $j$ appears to the right of $j-1$ in any reduced word for $d_A$.  

Then we define:
\[
h_i := \sum_{|A|=i} d_A, \text{ and }
e_i := \sum_{|A|=i} u_A.
\]
For a partition $\lambda = (\lambda_1, \ldots, \lambda_n)$, $h_\lambda= \prod h_{\lambda_i}$, and $e_\lambda= \prod e_{\lambda_i}$.

We frequently use the notation $A-1:= \{i-1 \mid i \in A\}$ and occasionally $A+1:=\{i+1 \mid i \in A\}$.

A \emph{cyclically increasing} (respectively, \emph{cyclically decreasing}) element of $\nilcox_k$ is an element specified by ordered collection of subsets $A_i\subsetneq I$, given by $u_{A_n}u_{A_{n-1}} \cdots u_{A_1}$ (respectively, $d_{A_n}d_{A_{n-1}} \cdots d_{A_1}$).  We abbreviate such products using the notation $\vec{A} := \{A_1, \ldots, A_n\}$, so that $u_{\vec{A}}:=u_{A_n} \cdots u_{A_1}$ and $d_{\vec{A}}:=d_{A_n} \cdots d_{A_1}$.  A cyclically increasing (resp. decreasing) product $x$ is \emph{maximal} if the \emph{shape of $\vec{A}$} given by the vector $\sh(A) := (|A_1|, |A_2|, \ldots, |A_n|)$ is lexicographically maximal amongst all cyclically increasing (resp. decreasing) expressions for $x$.  
\end{definition}

\begin{example}
Let $k=5$, so that $I=\{0,1,2,3,4,5\}$.  Set $A=\{0,2,4,5\}$.  Then $d_A=a_0a_5a_4a_2$, and $u_A=a_2a_4a_5a_0$.  There is a bijection between proper subsets of $I$ and cyclically decreasing elements.
\end{example}

\begin{theorem}[\cite{Lam06affinestanley}]
The elements $h_i$ generate a commutative subalgebra of $\nilcox_k$.
\end{theorem}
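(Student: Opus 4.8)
The plan is to prove the equivalent statement that the generators pairwise commute, $h_m h_n = h_n h_m$ for all $m,n\ge 0$ (which makes the generated subalgebra commutative), and to package these identities into a single generating-function assertion. Working in $\nilcox_k[[t]]$, I set
\[
\mathbf{h}(t) = \sum_{m\ge 0} h_m t^m = \sum_{A\subsetneq I} t^{|A|} d_A,
\]
so that the Theorem becomes the claim $\mathbf{h}(s)\mathbf{h}(t) = \mathbf{h}(t)\mathbf{h}(s)$ in $\nilcox_k[[s,t]]$; extracting the coefficient of $s^m t^n$ recovers $h_m h_n = h_n h_m$. The point of this repackaging is that $\mathbf{h}(t)$ is nearly multiplicative: writing the local factor $\mathbf{a}_i(t) := 1 + t\, a_i$, a cyclically decreasing $d_A$ is precisely the product of the $\mathbf{a}_i(t)$ over $i\in A$ taken in cyclic-decreasing order, so $\mathbf{h}(t)$ is assembled from the $\mathbf{a}_i(t)$ subject only to the constraint that the chosen index set be a \emph{proper} subset of $I$.

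Next I would isolate three local relations among the $\mathbf{a}_i(t)$, each immediate from the defining relations of $\nilcox_k$. Since $a_i^2=0$ we have $\mathbf{a}_i(s)\mathbf{a}_i(t) = 1+(s+t)a_i = \mathbf{a}_i(s+t)$, which is symmetric in $s$ and $t$; for $i,j$ non-adjacent on the cyclic Dynkin diagram the factors commute, $\mathbf{a}_i(s)\mathbf{a}_j(t)=\mathbf{a}_j(t)\mathbf{a}_i(s)$; and for adjacent indices one has the parametrized Yang--Baxter relation
\[
\mathbf{a}_i(s)\,\mathbf{a}_{i+1}(s+t)\,\mathbf{a}_i(t) = \mathbf{a}_{i+1}(t)\,\mathbf{a}_i(s+t)\,\mathbf{a}_{i+1}(s).
\]
This last identity is checked by expanding both sides: all terms of degree at most two in the $a$'s coincide after applying $a_i^2=a_{i+1}^2=0$, and the two cubic terms $st(s+t)\,a_{i,i+1,i}$ and $st(s+t)\,a_{i+1,i,i+1}$ agree precisely by the braid relation of Lemma~\ref{lem:extendedBraidRelation} (the case $j=i+1$). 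In the \emph{non-affine} nil-Coxeter algebra these three relations are exactly the Fomin--Stanley input for commutativity, because there $\mathbf{h}(t)$ is the honest linear product $(1+t\,a_k)\cdots(1+t\,a_1)$: one slides $\mathbf{h}(s)$ past $\mathbf{h}(t)$ by repeatedly applying the Yang--Baxter move to transport the spectral parameters across the factorization and collapsing coincident indices via $\mathbf{a}_i(s)\mathbf{a}_i(t)=\mathbf{a}_i(s+t)$, and the rearrangement telescopes. My plan is to run the same parameter-pushing bookkeeping on the cyclic product, tracking the monomials produced along the way.

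The main obstacle is the genuinely affine feature absent from the finite case: $\mathbf{h}(t)$ is \emph{not} a single linear product over all of $I$, since the sum ranges over proper subsets and every $d_A$ therefore has a ``gate.'' Consequently the telescoping argument must be carried out cyclically, and it must never create the forbidden full set $I$ as it pushes parameters across the wrap-around. I would control this by organizing the computation according to gate position, via an inclusion--exclusion over which residues are omitted, or equivalently by exhibiting an explicit weight-preserving involution (in the spirit of Fomin--Greene~\cite{FominGreene98} and Bender--Knuth) on pairs $(A,B)$ of proper subsets that interchanges the roles of $m$ and $n$ while fixing the underlying nil-Coxeter monomial. Verifying that such an involution is well defined for \emph{cyclically} decreasing factorizations --- in particular that it never forces $A$ or $B$ to become all of $I$ --- is where essentially all of the difficulty lies; it is exactly the cyclic phenomenon that the two-row moves developed later in this paper are designed to manage, and for which Lam~\cite{Lam06affinestanley} gives the original treatment.
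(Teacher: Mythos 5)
The first thing to note is that the paper does not prove this statement at all: it is imported verbatim from Lam~\cite{Lam06affinestanley}, so there is no internal proof to compare yours against. Judged on its own terms, your proposal is a plan rather than a proof, and its decisive step is missing. Your local identities are all correct: $\mathbf{a}_i(s)\mathbf{a}_i(t)=\mathbf{a}_i(s+t)$ follows from $a_i^2=0$, non-adjacent factors commute, and the parametrized Yang--Baxter relation is equivalent to the braid relation, exactly as you check. In the finite nil-Coxeter algebra these three facts do prove commutativity, because there $\mathbf{h}(t)$ literally factors as the ordered product $(1+ta_k)\cdots(1+ta_1)$, so the spectral parameters can be telescoped across a genuine factorization.

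The gap is that in $\nilcox_k$ no such factorization of $\mathbf{h}(t)$ exists --- the sum runs over cyclically decreasing $d_A$ for \emph{proper} subsets $A$, including wrap-around subsets and excluding $I$ itself --- and neither of your two proposed remedies is carried out. For the inclusion--exclusion over gate positions: the fixed-gate series $\mathbf{h}_g(t)=\sum_{A\subseteq I\setminus\{g\}}t^{|A|}d_A$ is indeed a finite-type ordered product and self-commutes, but commutativity of $\mathbf{h}(s)\mathbf{h}(t)$ requires controlling the cross terms $\mathbf{h}_g(s)\mathbf{h}_{g'}(t)$ with $g\neq g'$, and these do not commute by any local telescoping, since the parameters cannot be pushed past two different gates; you give no argument for how the alternating sum repairs this. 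For the involution on pairs $(A,B)$: constructing a bijection $(A,B)\mapsto(B',A')$ with $d_Ad_B=d_{B'}d_{A'}$, $|B'|=|B|$, $|A'|=|A|$, and with $A',B'$ still proper subsets, is not bookkeeping --- it is the entire content of Lam's proof, and it is exactly the kind of cyclic rearrangement that Lemma~\ref{lem:twoRowMoves} and Proposition~\ref{prop:updowndownup} of this paper are built to handle (note that those results themselves only produce containment or near-commutation relations, not cardinality-preserving swaps, so they cannot simply be quoted either). Your writeup names this object, observes that its existence is where all the difficulty lies, and stops; so the proposal ultimately defers to the very result it set out to establish.
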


\begin{definition}
The \emph{right descent set} of an element $w\in \nilcox_k$ is the set $D_R(w) := \{p\in I \mid w s_p = 0 \}$.  The left descent set $D_L(w)$ is defined similarly.

For $p\in I$, an element $w\in \nilcox_k$ is \emph{$p$-dominant} if $D_R(w)\subset \{p\}$.  When $p=0$, such elements are also known as \emph{Grassmannian elements}.
\end{definition}

\begin{lemma}
A cyclically decreasing (or increasing) element is connected if and only if it is $i$-dominant for some $i$.  
\end{lemma}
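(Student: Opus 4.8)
The plan is to read the statement for a single cyclically decreasing element $d_A$, where ``connected'' means the defining subset $A \subsetneq I$ is a cyclic interval $[i,l]$, and to compute the right descent set $D_R(d_A) = \{p \in I \mid d_A a_p = 0\}$ directly. Throughout I would use the standard fact that $p \in D_R(w)$ precisely when $w$ admits a reduced word ending in $a_p$, together with the fact (implicit in the stated bijection between proper subsets of $I$ and cyclically decreasing elements) that $d_A \neq 0$ for every $A \subsetneq I$. Recall that $d_A$ is $i$-dominant means $D_R(d_A) \subseteq \{i\}$, so the statement amounts to: $A$ is connected if and only if $|D_R(d_A)| \leq 1$.

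For the forward direction (connected implies $i$-dominant), I would observe that for $A = [i,l]$ the cyclically decreasing ordering forces the single word $d_A = a_l a_{l-1} \cdots a_{i+1} a_i$, a strictly decreasing run of consecutive indices. The key point is that this word is rigid: no braid move applies, since three consecutive letters never repeat the outer index, and no commutation move applies, since consecutive letters always differ by exactly one. Hence it is the unique reduced word for $d_A$, the only letter a reduced word can end in is $a_i$, and so $D_R(d_A) = \{i\}$, showing $d_A$ is $i$-dominant.

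For the converse I would argue the contrapositive: a disconnected $A$ yields at least two right descents. Writing $A = A_1 \sqcup \cdots \sqcup A_m$ as its connected components with $m \geq 2$, I would first show that generators from distinct components commute: if $x \in A_p$ and $y \in A_q$ with $p \neq q$ were cyclically adjacent in the Dynkin diagram, then $x,y$ would lie in a common interval of $A$, contradicting that they are in different components; hence every cross pair has cyclic distance at least two and the corresponding generators commute. Consequently $d_A = d_{A_1} \cdots d_{A_m}$ as a commuting product. Applying the forward computation to each factor, $d_A$ then has a reduced word ending in the bottom index $i_j$ of each component $A_j = [i_j, l_j]$, so $i_1, \ldots, i_m$ are distinct elements of $D_R(d_A)$; thus $|D_R(d_A)| \geq 2$ and $d_A$ is not $i$-dominant for any $i$.

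Finally, the increasing case is completely symmetric: $u_{[i,l]} = a_i a_{i+1} \cdots a_l$ is again rigid with unique right descent $l$, and a disconnected $u_A$ factors as a commuting product whose factors contribute the distinct top indices of the components to $D_R(u_A)$. The only step requiring genuine care is the rigidity claim in the forward direction, which I expect to be the main (if modest) obstacle; I would make it precise by checking that neither defining relation of $\nilcoxmon_k$ (the commutation relation or the braid relation) has its left-hand side occurring as a subword of a strictly monotone run of consecutive indices, so that such a run is its own unique reduced word and its descent sets are read directly off its two endpoints.
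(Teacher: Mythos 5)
Your proof is correct and takes essentially the same approach as the paper's: the paper's (much terser) argument likewise notes that a disconnected element has multiple descents, while a connected one admits no braid or commutation relations and hence has a unique reduced word with a single right descent. Your write-up simply makes explicit the rigidity claim and the commuting factorization into components that the paper leaves implicit.
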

\begin{proof}
If not connected, then the element has multiple descents.  If it is connected, no relations may be applied to the element, and so there is only one right descent.
\end{proof}

\subsection{$k$-Schur Functions.}
\label{ssec:kSchurFunctions}

The literature on $k$-Schur functions is extensive, but an excellent overview is given in ``A Primer on $k$-Schur Functions,'' by Schilling and Zabrocki~\cite{kschurprimer}.  Additional background on the realization of the $k$-Schur functions in non-commuting variables over the affine nil-Coxeter algebra may be found in~\cite{bbtz2011}.

\begin{definition}
A $k$-bounded partition is a partition $\lambda=(\lambda_1, \ldots, \lambda_n)$ with each $\lambda_i\leq k$.
A $k+1$-core is a partition $\mu$ with no hooks of length $k+1$.  Given a $k+1$-core $\mu$, the \emph{$k+1$-boundary} $\partial_{k+1}(\mu)$ is the skew shape obtained by deleting all boxes of $\mu$ with hook length greater than $k+1$.  When $k$ is not ambiguous, we will just write $\partial(\mu)$.
\end{definition}

There is a well-known bijection between $k$-bounded partitions and $k+1$ cores.  The bijection is defined by an algorithm on the bounded partition: starting with the first row of the Ferrer's diagram for $\lambda$, if the first box $b$ of a given row has hook length $\geq k+1$, we add boxes to the beginning of the row until the box $b$ has hook length $\leq k$.  We perform this operation on each row of $\lambda$ sequentially to obtain a $k+1$-core.  We may recover the $k$-bounded partition by taking the $k$-boundary of the $k+1$-core and pushing all of the boxes in the resulting skew shape to the left to form a partition.

For a $k$-bounded partition $\lambda$, we write $\mathfrak{c}(\lambda)$ for the associated $k+1$-core, and for a $k+1$-core $\mu$, write $\mathfrak{p}(\mu)$ for the associated $k$-bounded partition.  Thus $\mathfrak{p}(\mathfrak{c}(\lambda))=\lambda$.

Of considerable importance in the study of $k$-Schur functions is the $k$-conjugate.  For a $k$-bounded partition $\lambda$, the $k$-conjugate $\lambda^{(k)}:= \mathfrak{p}(\mathfrak{c}(\lambda)')$, where $\mathfrak{c}(\lambda)'$ denotes the usual conjugate of the $k+1$-core associated to $\lambda$.  Notice that the usual conjugate of a $k$-bounded partition need not be $k$-bounded; the $k$-conjugate returns another $k$-bounded partition.  (See Figure~\ref{fig.boundedCoreBiject}.)  The $k$-conjugate combinatorially implements on the level of $k$-Schur functions the automorphism of the symmetric functions which exchanges $h_i$ and $e_i$.

\begin{figure}
  \begin{center}
  \includegraphics[scale=0.75]{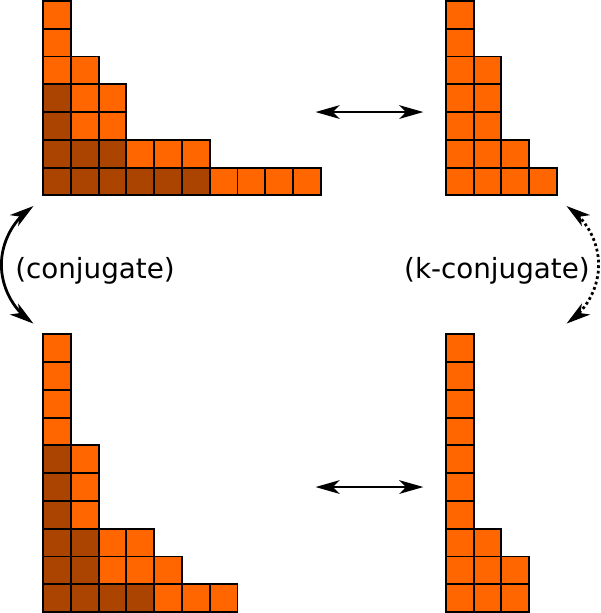}
  \end{center}
  \caption{Illustration of the bijection between $k$-bounded partitions and $k+1$-cores and the $k$-conjugation operation.  In this case, $k=4$.  On the left are two $5$-cores with the boxes with hook $>5$ shaded darker, and on the right are the corresponding $4$-bounded partitions.  The $5$-cores are related by conjugation; the $4$-bounded partitions are related by $k$-conjugation.}
  \label{fig.boundedCoreBiject}
\end{figure}

Recall that for $\lambda, \nu$ partitions of $n$, $\nu$ \emph{dominates} $\lambda$ if $0\leq \sum_{i=1}^j (\nu_i-\lambda_i)$ for every $j$ (possibly padding one of the partitions with zeroes if their lengths are unequal).  In this case, we write $\nu\succ \lambda$.

The $k$-Schur functions are indexed by $k$-bounded partitions, and may be defined by the Pieri rule.  The Pieri rule gives an inductive definition of the $k$-Schur functions, by setting $s_{(l)}^{(k)}:=h_l$, and then expressing $h_l s_\mu^{(k)}=\sum_\nu s_\nu^{(k)}$ according to some restrictions on $\nu$.  In particular, the partitions $\nu$ satisfy a triangularity property with respect to the dominance order, allowing recursive definition of the $k$-Schur functions.

There are different interpretations of the Pieri rule in different contexts, but the primary definition is by weak horizontal strips.  Given partitions $\lambda \subset \mu$, we say that the skew shape $\mu\setminus\lambda$ is a \emph{horizontal strip} if each column of $\mu\setminus\lambda$ contains at most one box.  Likewise, it is a \emph{vertical strip} if each row contains at most one box.  If $\lambda \subset \mu$ are $k$-bounded partitions, we say that the skew shape $\mu\setminus\lambda$ is a \emph{weak horizontal strip} if $\mu\setminus \lambda$ is a horizontal strip and $\mu^{(k)}\setminus\lambda^{(k)}$ is a vertical strip.  Then the Pieri rule may be stated as:
\[
h_l s_\mu^{(k)} = \sum_\nu s_\nu^{(k)},
\] 
where $\nu\setminus\mu$ is a weak horizontal strip~\cite{lapointeMorse2003}.  As a consequence, we can observe that if $l$ is less than the last part of $\mu$, then
\[
h_l s_\mu^{(k)} = s_{\mu\cup l}^{(k)} + \sum_{\nu} s_\nu^{(k)},
\]
where each $\nu\setminus \mu$ is a vertical strip.  Furthermore, each $\nu$ dominates $\mu \cup l$.

Recall that there is a bijection between Grassmannian (or $0$-dominant) affine permutations in $\AS_k$ and $k$-bounded partitions.  Their relation to the $k$-Schur functions is described by the following theorem, which arises as a consequence of the Pieri rule:
\begin{proposition}
For $l \in \{1, \ldots, k\}$, $s_{(l)}^{(k)}:=h_l$.  Each $k$-Schur function $s_\lambda^{(k)}$ appears with multiplicity one in $h_\lambda$.  Furthermore, in its expansion in $\nilcox_k$, $s_\lambda^{(k)}$ contains a unique $0$-dominant summand, $w_\lambda$.
\end{proposition}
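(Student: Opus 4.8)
The plan is to dispose of the three assertions in turn; the first is simply the base case of the definition. For the multiplicity statement I would expand $h_\lambda = \sum_\mu K^{(k)}_{\mu\lambda}\, s^{(k)}_\mu$ by iterating the Pieri rule, and invoke the triangularity property quoted before the proposition, which guarantees $K^{(k)}_{\mu\lambda}\neq 0$ only if $\mu\succeq\lambda$. This reduces the claim to showing $K^{(k)}_{\lambda\lambda}=1$, which I would prove by induction on the number of parts of $\lambda$. Order the parts so that $\lambda_1\ge\cdots\ge\lambda_n$, set $\bar\lambda=(\lambda_1,\ldots,\lambda_{n-1})$, and use the commutativity of the $h_i$ to write $h_\lambda = h_{\lambda_n} h_{\bar\lambda}$, with inductive hypothesis $K^{(k)}_{\bar\lambda\bar\lambda}=1$.

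The crux of $K^{(k)}_{\lambda\lambda}=1$ is an interlacing rigidity. Expanding $h_\lambda = \sum_\mu K^{(k)}_{\mu\bar\lambda}\, h_{\lambda_n} s^{(k)}_\mu$, I would show that $s^{(k)}_\lambda$ can occur in $h_{\lambda_n} s^{(k)}_\mu$ (for $\mu\succeq\bar\lambda$) only when $\mu=\bar\lambda$: indeed, if it occurred then $\lambda/\mu$ would be a weak horizontal strip, so $\mu\subseteq\lambda$ and $\sum_{i\le j}\mu_i\le\sum_{i\le j}\lambda_i$ for all $j$, while $\mu\succeq\bar\lambda$ forces the reverse inequality for $j\le n-1$; the partial sums are therefore equal and $\mu=\bar\lambda$. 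Since $\lambda_n\le\lambda_{n-1}$ is the last part of $\bar\lambda$, the stated consequence of the Pieri rule gives $s^{(k)}_\lambda$ with coefficient one in $h_{\lambda_n} s^{(k)}_{\bar\lambda}$, and combined with $K^{(k)}_{\bar\lambda\bar\lambda}=1$ this yields $K^{(k)}_{\lambda\lambda}=1$.

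For the third assertion I would work directly in $\nilcox_k$ and exploit one decisive observation: left multiplication by a cyclically decreasing element cannot destroy a right descent, since $w a_i = 0$ forces $d_A w\, a_i = 0$. Hence a non-$0$-dominant term remains non-$0$-dominant after multiplication by any $h_l=\sum_{|A|=l} d_A$, so the $0$-dominant part of $h_\lambda$ is controlled entirely by the Grassmannian Pieri rule. Using the weak-strip characterization $y=d_A x$ recalled above, the coefficient of the Grassmannian element $w_\mu$ in $h_\lambda$ counts chains $\emptyset=\nu^{(0)}\subset\cdots\subset\nu^{(n)}=\mu$ whose successive quotients are weak horizontal strips of sizes the parts of $\lambda$; this is exactly $K^{(k)}_{\mu\lambda}$. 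Writing $g_{\mu\nu}$ for the coefficient of $w_\mu$ in $s^{(k)}_\nu$ and applying the ``coefficient of $w_\mu$'' functional to $h_\lambda=\sum_\nu K^{(k)}_{\nu\lambda}\, s^{(k)}_\nu$, I obtain $K^{(k)}_{\mu\lambda}=\sum_\nu g_{\mu\nu} K^{(k)}_{\nu\lambda}$, that is $K^{(k)}=g\,K^{(k)}$ as matrices indexed by $k$-bounded partitions of $|\lambda|$. Since $K^{(k)}$ is unitriangular by the first two assertions, hence invertible, we conclude $g=I$: each $s^{(k)}_\nu$ has $w_\nu$ as its unique $0$-dominant summand, occurring with coefficient one.

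The step I expect to be the main obstacle is proving that $[w_\mu]\,h_\lambda$ really equals the $k$-Kostka number $K^{(k)}_{\mu\lambda}$. This needs both the multiplicity-freeness of the Grassmannian Pieri rule, i.e.\ uniqueness of the cyclically decreasing factor $d_A$ realizing a given weak strip (a shadow of the maximal-decomposition uniqueness developed later in the paper), and the identification of the resulting weak-strip chain count with the symmetric-function $k$-Kostka number. Once these are in place, the descent observation and the matrix-inversion argument are routine, so the delicate content is confined entirely to the Grassmannian Pieri bookkeeping.
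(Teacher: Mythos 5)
The paper never actually proves this proposition: it sits in the background section, introduced with ``which arises as a consequence of the Pieri rule,'' and its content is imported from the literature (Lam's realization of the $k$-Schur functions in $\nilcox_k$ and the Lapointe--Morse Pieri rule). So there is no proof of record to compare against; what you have written is a correct, self-contained reconstruction of exactly that citation, and it uses only facts the paper itself quotes: dominance triangularity of the Pieri expansion, commutativity of the $h_i$, and the weak-strip characterization of products $y=d_A x$ of Grassmannian elements. Three remarks. First, the step you flag as the main obstacle --- uniqueness of the set $A$ realizing a given weak strip --- needs no ``shadow'' of the maximal-decomposition machinery of Section~\ref{sec:kcastles}: a nonzero product in $\nilcoxmon_k$ is length-additive, so $d_A x = d_B x \neq 0$ forces $d_A = d_B$ as elements of $\AS_{k+1}$ by group cancellation, and the paper records that $A \mapsto d_A$ is a bijection between proper subsets of $I$ and cyclically decreasing elements, whence $A=B$. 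Phrasing it this way keeps your argument free of forward references, which matters because later results (Corollary~\ref{cor:shiftDominance}, Theorem~\ref{thm:maxhCoefficient}, and the Littlewood--Richardson argument of Section~\ref{sec:lwrule}) all take the present proposition as input. Second, your descent-preservation observation $D_R(v)\subseteq D_R(wv)$ is precisely the fact the paper invokes in Section~\ref{sec:lwrule}, so your mechanism is consistent with how the proposition is used downstream. Third, the matrix identity $K^{(k)} = g\,K^{(k)}$ with $K^{(k)}$ unitriangular, hence $g=I$, is a nice economy: it delivers uniqueness and coefficient one of the $0$-dominant summand for all $\lambda$ of a given size simultaneously, rather than chasing a single $w_\lambda$ through the Pieri recursion.
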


There is a second interpretation of the Pieri rule in the context of the affine nil-Coxeter algebra.  Take $x$ to be a $0$-dominant element of the affine nil-Coxeter monoid.  Then:  
\[
h_l s_x^{(k)} = \sum_y s_y^{(k)},
\]
where the sum is over Grassmannian elements $y$ such that $y=d_Ax$ for some $A\subsetneq I$ with $|A|=l$.

\begin{corollary}
\label{cor:shiftDominance}
Each $k$-Schur function $s_\lambda^{(k)}$ contains a unique $i$-dominant summand for each $i\in I$.
\end{corollary}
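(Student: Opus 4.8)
The plan is to exploit the Dynkin diagram automorphism $\Psi$, which cyclically shifts the generators and therefore cyclically shifts right descent sets. Since the preceding Proposition already supplies a \emph{unique} $0$-dominant summand $w_\lambda$ of $s_\lambda^{(k)}$, it suffices to transport that statement around the Dynkin diagram by powers of $\Psi$ and check that nothing is created or destroyed along the way.

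First I would verify that $\Psi$ fixes each $h_i$, and hence fixes every $k$-Schur function. Shifting all indices of a cyclically decreasing element by one preserves cyclic decreasingness, so $\Psi(d_A) = d_{A+1}$; since $A \mapsto A+1$ is a bijection on the proper subsets of $I$ of a fixed size, summing gives $\Psi(h_i) = \sum_{|A|=i} d_{A+1} = h_i$. Because each $s_\lambda^{(k)}$ lies in the commutative subalgebra generated by the $h_i$ — the Pieri recursion expresses $h_\mu = \prod h_{\mu_j}$ unitriangularly in the $s_\nu^{(k)}$, so each $s_\lambda^{(k)}$ is a uniquely determined integer combination of the $h_\mu$ — it follows that $\Psi(s_\lambda^{(k)}) = s_\lambda^{(k)}$.

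Next I would record the effect of $\Psi$ on the expansion $s_\lambda^{(k)} = \sum_w c_w\, w$ over the monoid basis. As $\Psi$ is an algebra automorphism sending generators to generators, it permutes the nonzero monoid elements, and the identity $\Psi(w a_p) = \Psi(w)\, a_{p+1}$ shows $p \in D_R(w)$ iff $p+1 \in D_R(\Psi(w))$, i.e. $D_R(\Psi(w)) = D_R(w)+1$. In particular $w$ is $i$-dominant precisely when $\Psi(w)$ is $(i+1)$-dominant. Combining this with $\Psi(s_\lambda^{(k)}) = s_\lambda^{(k)}$ yields $c_{\Psi(w)} = c_w$ for every $w$, so $\Psi^i$ restricts to a coefficient-preserving bijection from the set of $0$-dominant summands of $s_\lambda^{(k)}$ onto the set of its $i$-dominant summands. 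Since the former is the singleton $\{w_\lambda\}$, so is the latter, namely $\{\Psi^i(w_\lambda)\}$, which is exactly the claim.

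The only genuine subtlety — and the step I would write out most carefully — is the invariance $\Psi(s_\lambda^{(k)}) = s_\lambda^{(k)}$: one must be certain that the $k$-Schur function is truly a polynomial expression in the $h_i$ (equivalently, lies in the image of $\Lambda^{(k)}$ inside $\nilcox_k$), so that $\Psi$-invariance of the generators forces $\Psi$-invariance of $s_\lambda^{(k)}$. Everything after that is routine bookkeeping of how $\Psi$ acts on descent sets and on coefficients.
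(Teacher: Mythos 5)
Your proposal is correct and follows essentially the same route as the paper: the paper's proof likewise obtains the $i$-dominant summand as $\Psi^i(w_\lambda)$ and gets uniqueness by applying the inverse power of $\Psi$ to transport any second $i$-dominant summand back to a second $0$-dominant one. The only difference is one of detail — you explicitly justify the $\Psi$-invariance of $s_\lambda^{(k)}$ (via $\Psi(h_i)=h_i$ and the unitriangular Pieri expansion) and the shift $D_R(\Psi(w))=D_R(w)+1$, both of which the paper leaves implicit.
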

\begin{proof}
The statement is true for $i=0$.  One may obtain an $i$-dominant summand in $s_\lambda^{(k)}$ by applying $\Psi^i(w\lambda)$.  This summand is unique, or else we could apply $\Psi^{k-i}$ to obtain more than one $0$-dominant summand.
\end{proof}

An important part of our later proofs in this paper will rely on finding coefficients of certain elements in the expansion of $h_\lambda$, $e_\lambda$, or $s_\lambda^{(k)}$.  For this, we employ the notation:
\[
[x]f := \text{ coefficient of $x$ in $f$.}
\]
For example, in $h_l$, we have:
\[
[d_A]h_l = \delta_{|A|, l}.
\]

\section{Canonical Cyclic Decompositions and $k$-Codes}
\label{sec:kcastles}

We first consider products of cyclically decreasing elements.  All of the results in this section may be adapted to products of cyclically increasing elements with small modifications.  For example, results concerning $k$-bounded partitions for products of cyclically decreasing elements become statements about $k$-column bounded partitions for cyclically increasing products.  These different decompositions are explored in Section~\ref{ssec:relations}.

Suppose we have a collection of subsets $\vec{A} = \{A_1, \ldots, A_n\}$ such that each $A_i\subsetneq I$.  Then we can form a cyclically decreasing product $d_{\vec{A}} = d_{A_n}\cdots d_{A_1}$.  Trivially every element $w\in \nilcox_k$ has an expression as a cyclically decreasing  product, by taking any reduced expression of length $n$ and considering the element as a product of $n$ cyclically decreasing elements of length $1$.  

Our primary goal for this section is to show that every affine permutation has a maximal expression as a product of cyclically decreasing elements, in the sense that the vector $(|A_1|, \ldots, |A_n|)$ is lexicographically maximal amongst all cyclically decreasing decompositions of $x$.  Given such a maximal decomposition, we may associate it with its $k$-code, defined immediately below.  We then show that $k$-codes are in bijection with affine permutations.  Along the way we will create algorithms analogous to jeu de taquin and insertion on $k$-codes, corresponding respectively to maximizing a cyclically decreasing decomposition and multiplying by a single generator.

\begin{definition}
A \emph{$k$-code} is a function $\alpha: \ZZ_{k+1}\rightarrow \ZZ_{\geq 0}$ such that there exists at least one $i$ with $\alpha(i)=0$.  The \emph{window notation} for $\alpha$ is the vector $[\alpha_0, \alpha_{1}, \ldots, \alpha_{k-1}, \alpha_{k}]$.  We usually identify $\alpha$ with its window notation.

The \emph{diagram of a $k$-code} $\alpha$ is a Ferrer's diagram on a cylinder with $k+1$ columns, indexed by $\ZZ_{k+1}$ where the $i$-th column contains $\alpha_i$ boxes.  A \emph{$k$-code filling} is a marking of the diagram of $\alpha$ with residues from $\ZZ_{k+1}$, with the box in the $i$th column and $j$th row marked with residue $i-j$.  We may \emph{flatten} a $k$-code's diagram by cutting out a column $j$ with $\alpha_j=0$.  A \emph{reading word} of a $k$-code filling is obtained by reading the rows of this flattened filled diagram from right to left, beginning with the last row.

A \emph{non-maximal $k$-code filling} $S=\vec{A}$ is given by a collection of subsets $\{A_1, \ldots, A_n\}$ with $A_i\subsetneq I$.  The $i$th row of the diagram of $S$ contains the residues in $A_i$.

A \emph{$k$-column castle tableau} is defined similarly, but on a cylinder with $k+1$ rows marked with residues.  In this case, the flattening is obtained by removing a row $j$ with $\alpha_j=0$.  The reading word is obtained from a flattened tableau by reading the columns top-to-bottom, beginning with the right-most column.
\end{definition}

Because there is a unique $k$-code filling constructed from each $k$-code, we will commonly identify these two objects, referring to both as a $k$-code.  We will develop an insertion algorithm in Section~\ref{ssec:insertion}, which will produce two `tableaux'.  The first tableau is just the $k$-code filling, and the second is a `recording tableau,' which yields a new combinatorial object in bijection with cyclic decompositions of an affine permutation.  The $k$-shape filling may be considered as a tableau by analogy to the RSK algorithm, rather than being a chain in a poset.

Examples are provided in Figures~\ref{fig.affineTableau} and~\ref{fig.example0}.

The rows of a $k$-code filling each correspond to a cyclically decreasing element with the residues appearing in that row.  This cyclically decreasing element is invariant under different choices of flattening for the tableau; the reading words of flattened tableaux will be related by commutation relations in $\nilcox_k$.

Note that the number of boxes in a $k$-code filling is equal to the number of letters in the decomposition $d_{\vec{A}}$, providing a natural grading on $k$-codes which will correspond to the length grading on affine permutations.

We will show that $k$-codes are in bijection with affine permutations in Theorem~\ref{thm:compTabBijection}.

\begin{figure}
  \begin{center}
  \includegraphics[scale=0.5]{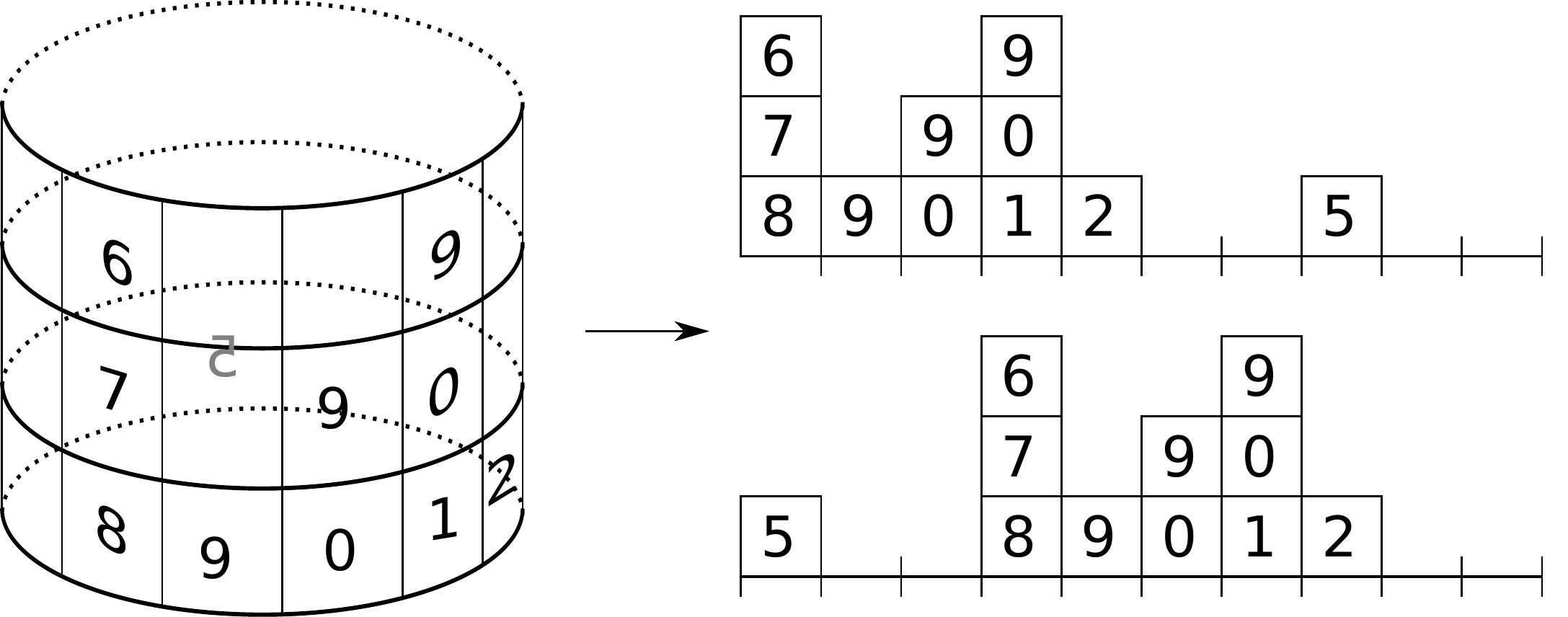}
  \end{center}
  \caption{On the left is the diagram of the $9$-core with window notation $(2,3,1,0,0,1,0,0,3,1)$ drawn on a cylinder.  On the right are two possible flattenings of this tableau.  The reading word of the top flattening is $(9,6, 0,9,7, 5,2,1,0,9,8)$.  The reading word of the bottom flattening is $(9,6, 0,9,7, 2,1,0,9,8,5)$.  These two reading words are related by commutation relations in $\nilcox_9$.}
  \label{fig.affineTableau}
\end{figure}

\begin{figure}
  \begin{center}
  \includegraphics[scale=0.75]{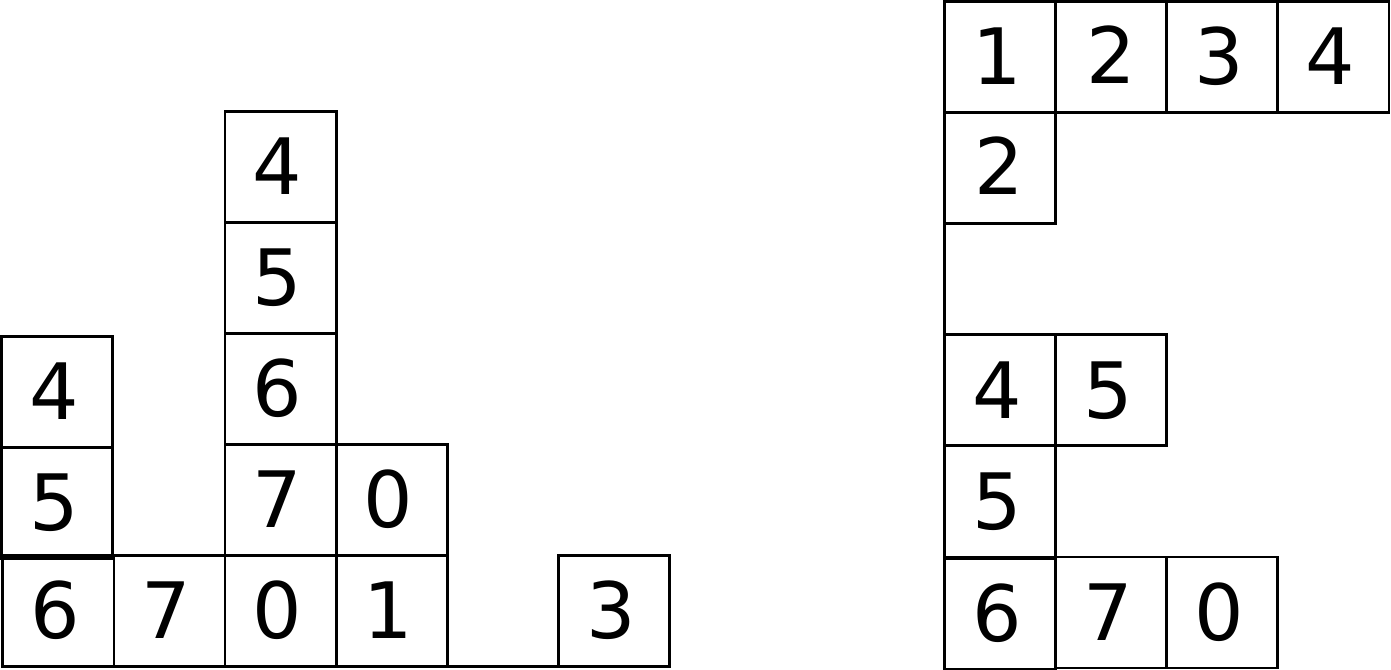}
  \end{center}
  \caption{On the left is a flattened $7$-castle tableau for the $7$-castle $\alpha=(5,2,0,1,0,0,3,1)$.  The reading word for this tableau is $(4,5,6,4,0,7,5,3,1,0,7,6)$.  On the right is a $7$-column bounded tableau for the $k$-code 
  $\rho=(0,4,1,0,2,1,3,0)$, whose reading word is $(4,3,0,2,5,7,1,2,4,5,6)$.}
  \label{fig.example0}
\end{figure}

\subsection{Maximal Cyclic Decompositions and $k$-Codes}
\label{ssec:maxCyclic}

Our first objective is to show that there exists a unique maximal set $A\subsetneq I$ such that $x=yd_A$ for some $y$.  The process is constructive, and provides a simple algorithm for finding $A$.  Given any $x$, for each $i\in D_R(x)$ we find the largest possible set $D_i:=[i,j]\subsetneq I$ such that:
\[
x=y a_j a_{j-1} \cdots a_i.
\]
We ultimately show (Lemma~\ref{cor:maxRightSet}) that $A$ is the union of the sets $D_i$, and is thus uniquely specified.  But first we need a Lemma describing the relationship between these various sets $D_i$.

\begin{definition}
Let $x$ be an affine permutation, considered as an element of the nilCoxeter monoid, and $i \in D_R(x)$.  Define the set $D_i = [i,j]$ to be the maximal set such that
\[
x= y a_j a_{j-1} \cdots a_i,
\]
for some $y\in {\hat {\mathbb A}}_k$.
\end{definition}

\begin{lemma}
\label{lem:maxRightSet}
Suppose $x$ is an affine permutation, and $A=D_i, B=D_j\subsetneq I$.  Then if $A\cap B\neq \emptyset$, either $A\subset B$ or $B\subset A$.  Furthermore, if $A\cap B=\emptyset$ then $A\cup B$ is not connected.
\end{lemma}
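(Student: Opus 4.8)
The plan is to translate the algebraic condition defining $D_i$ into an inequality condition on $x$ viewed as a bi-infinite sequence, and then prove both assertions by elementary comparisons of values. The crucial preliminary step is the characterization: for $i\in D_R(x)$, the element $x$ factors as $x=y\,a_j a_{j-1}\cdots a_i$ reduced if and only if $x(i)>x(i+s)$ for every $s\in\{1,2,\ldots,j-i+1\}$. This follows by peeling generators off the right one at a time: since right multiplication acts on positions, requiring $a_i$ to be a right descent forces $x(i)>x(i+1)$ and moves the value $x(i)$ into position $i+1$; requiring the next letter $a_{i+1}$ to be a descent of the resulting element forces $x(i)>x(i+2)$, and inductively peeling $a_{i+t}$ forces $x(i)>x(i+t+1)$, with the value $x(i)$ bubbling rightward. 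Consequently $D_i=[i,j]$ is exactly the interval for which the positions $i+1,\ldots,j+1$ all carry values below $x(i)$ while $x(j+2)>x(i)$; equivalently $|D_i|$ is the length of the maximal run of positions immediately right of $i$ whose values lie below $x(i)$. Finiteness of this run, so $D_i\subsetneq I$, is immediate from $x(i+k+1)=x(i)+k+1>x(i)$.

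For the first assertion, I would lift both residue-runs to honest integer intervals in $\ZZ$. Because each of $A,B$ has fewer than $k+1$ elements, each residue occurs at most once in either, so a common residue $c\in A\cap B$ can be used to align the two lifts at a single integer position $\gamma$; the lifted runs $R_a=[p,q]$ and $R_b=[p',q']$ then both contain $\gamma$ and so meet as integer intervals. Assuming $p\le p'$ (if $p=p'$ the runs coincide, so take $p<p'$), intersection gives $p'\le q$, so $p'$ lies among the below-$x(p)$ positions and hence $x(p')<x(p)$. If $q'\le q$ the intervals are nested and we are done; otherwise $q'>q$, and I would examine the position $q+2$, which satisfies $x(q+2)>x(p)>x(p')$ by the characterization of $R_a$, yet also lies among the below-$x(p')$ positions of $R_b$ (as $p'<q+2\le q'+1$), forcing $x(q+2)<x(p')$, a contradiction. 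Nesting of the integer runs then transfers back to containment of the residue sets $A$ and $B$.

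For the second assertion I would argue by contradiction: if $A\cap B=\emptyset$ but $A\cup B$ is connected, the two cyclic intervals are adjacent, so after relabelling the start $b$ of one equals $a'+1$, the successor of the end $a'$ of the other, where $A=[a,a']$. By the characterization of $A=D_a$, the position adjacent to $A$ carrying residue $b=a'+1$ is the last below-$x(a)$ position, so $x(b)<x(a)$, while the next position (residue $b+1$) is the first above, so $x(b+1)>x(a)$; hence $x(b)<x(b+1)$. But $b\in D_R(x)$ requires $x(b)>x(b+1)$, a contradiction. Thus no such adjacency can occur, which also disposes of the wraparound case $A\cup B=I$.

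The main obstacle is the first step: pinning down the value characterization of $D_i$ rigorously, since it requires correctly tracking lengths while peeling a decreasing run and verifying that the bubbling of the value $x(i)$ behaves as claimed. Once this dictionary between the algebra and the inequalities on $x$ is in hand, both parts reduce to short comparisons of values, the only remaining care being the consistent choice of integer lifts in the cyclic setting.
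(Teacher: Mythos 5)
Your overall strategy---peel generators off the right to translate the definition of $D_i$ into value inequalities for $x$ as a bi-infinite sequence, then compare values to get nesting and non-adjacency---is the same as the paper's, and both of your concluding arguments are fine \emph{given} your dictionary. But the dictionary, as you state it, is false, and it fails exactly in the case the paper must treat separately. Your claimed equivalence ``$x=y\,a_ja_{j-1}\cdots a_i$ reduced iff $x(i)>x(i+s)$ for $s=1,\ldots,j-i+1$'' is correct only when the run has at most $k$ letters, so that the positions $i,i+1,\ldots,j+1$ are distinct modulo $k+1$. A reduced cyclically decreasing run \emph{can} be longer than that: in $\AS_2$ (so $k=1$) the word $s_0s_1s_0$ is reduced, and in $\AS_3$ (so $k=2$) the word $s_0s_2s_1s_0$ is reduced and is a single decreasing run $a_3a_2a_1a_0$ of four letters. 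Your bubbling induction breaks because right multiplication by $s_{i+t}$ swaps \emph{all} pairs of positions congruent to $i+t$ and $i+t+1$ modulo $k+1$: at step $t=k$ the bubbling value $x(i)$, sitting in position $i+k$, is compared not with $x(i+k+1)=x(i)+k+1$ (that translate has itself been displaced by the translate of the very first swap) but with $x(i+k+2)$. So the inequality $x(i)>x(i+k+1)$ is never among the requirements for peeling a long run, and your conclusion that periodicity bounds the length of a reduced decreasing run by $k$ is simply wrong. This is precisely the situation the paper isolates as Case 2 of its proof (``we may have $n>k$, so that $A_0$ contains repetitions''); your proposal both misses that case and explicitly argues it cannot occur.

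The gap is repairable, because the lemma concerns only the \emph{sets} $D_i$, which are by definition proper intervals and hence have at most $k$ elements. If you prove the peeling characterization only for runs of length at most $k$ (where all positions involved are distinct modulo $k+1$ and the bubbling argument is sound), you can still conclude that $D_i=[i,j]$ exactly when positions $i+1,\ldots,j+1$ carry values below $x(i)$ and $x(j+2)>x(i)$: for $|D_i|<k$ the inequality $x(j+2)>x(i)$ expresses genuine maximality, while for $|D_i|=k$ it holds automatically because $j+2=i+k+1$. One can check that even when the underlying word-run does wrap around, this characterization still correctly returns $D_i=I\setminus\{i-1\}$. With the dictionary so restricted, your interval-nesting argument for the first assertion and your descent contradiction for the second (which is in fact slicker than the paper's run-extension argument) both go through. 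As written, though, the cornerstone claim of the proposal is false and its proof has a failing inductive step, at the same spot where the paper's proof does real work.
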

\begin{proof}
We begin by constructing sequences of residues $A_0$ and $B_0$ in the following way.  Set $A_0=(i+n, i+n-1, \ldots, i+1, i)$ with $n$ maximal such that $x=y u_{i+n}\cdots u_i$; we may have $n>k$, so that the $A_0$ contains repetitions of the same index.  Likewise, construct $B_0=(p+m, p+m-1, \ldots, p+1, p)$, such that $m$ is maximal and and $x=y u_{p+m}\cdots u_p$.  Our initial goal will be to show that if $B_0$ and $A_0$ share any indicies, then we must have $A_0\subset B_0$ or vice versa.

As $x$ is an affine permutation, we may consider $x$ as a doubly-infinite sequence of integers without repetitions.  We set $x_l:=x(l)$.  Recall that if $l\in D_R(x)$, we have $x_l>x_{l+1}$, and $x$ has a reduced word ending in $l$.  Since $x=y u_{i+n}\cdots u_{i+1}u_i$, we have $x_i>x_{i+1}$.  Removing the descent at $i$, we obtain $y u_{i+n}\cdots u_{i+1}$, and so this element has a right descent at $i+1$.  But here $x_i$ appears in position $i+1$, and so we have $x_i>x_{i+2}$.  We may continue peeling off generators from the right to show that $x_i>x_k$ for each $k\in \{i+1, i+2, \ldots, i+n+1\}$.

Now we consider two cases.
\begin{itemize}
\item Case 1: $|A_0|, |B_0|\leq k$.  Then we can set $A$ and $B$ to be the set of indices appearing in $A_0$ and $B_0$ respectively.  If $p=D_R(d_B)$ appears in $A$, then $x_i>x_p$.  But $x_p>x_k$ for all $k\in [p+1, p+m+1]$, so that $x_i>x_l$ for all $l\in [i+1, p+m+1]$.  Thus, $B\subset A$.  

\item Case 2: $|A_0|\geq k+1$.  Then $p\in A_0$, so that either $x_i>x_p$ or $x_i>x_{p+k+1}$ if $p<i$ as an integer.  
In the case where $x_i>x_p$, we have $x_{i+k+1}=x_i+k+1>x_p$, so that $i\not \in B_0$.  Additionally, if $i-1$ were in $B_0$, we would have $x_p>x_{i}>x_{i+1}$, which would in turn mean $i\in B_0$.  Thus, $i-1, i\not \in B_0$.  The same reasoning holds if $p>i$ as an integer.  

As such, $B_0$ is a proper subset of the index set, containing neither $i-1$ or $i$.  We set $B=\{p, p+1, \ldots, p+m\}$ and $A=\{i, i+1, \ldots, i-2\}\subsetneq I$.  Then we have shown that $B\subset A$.
\end{itemize}
Thus, if $A\cap B\neq \emptyset$, we have either $B\subset A$ or vice versa.

Now suppose $A\cap B=\emptyset$ and $A\cup B$ connected as a subgraph of the Dynkin diagram, and let $A=[i,j]$ and $B=[p,q]$.  If $|A|, |B|<k$, we have $p=j+1$,and $x_i>x_{j+1}=x_p>x_k$ for all $k\in [p+1,q+1]$.  But then $x_i>x_k$ for all $k\in [i+1, q+1]$, so we can find $C$ such that $A\subsetneq C$ and $x=wd_{C}$, contradicting the maximality of $A$.  If $|A|=k$, we must have $|B|=1$, so we may repeat the same argument and show that $B$ was not maximal.  Thus we have $A\cup B$ disconnected.
\end{proof}

\begin{corollary}
\label{cor:maxRightSet}
For any affine permutation $x$, there exists a unique maximal $A\subsetneq I$ such that $x=y d_A$ with $\len(x)= \len(y) + |A|$.
\end{corollary}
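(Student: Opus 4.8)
The plan is to take $A := \bigcup_{i \in D_R(x)} D_i$ as the candidate maximal set and to prove separately that this $A$ can be factored off (existence) and that it contains every other right-factorable set (maximality, which then forces uniqueness). The first thing I would record is the structural consequence of Lemma~\ref{lem:maxRightSet} on the family $\{D_i : i \in D_R(x)\}$: any two members are either nested or disjoint-and-disconnected. Consequently the members maximal under inclusion are pairwise disjoint, every $D_i$ is contained in a maximal one (so $A$ is just the union of the maximal $D_i$'s), and since disjoint members are disconnected, the connected components of $A$ are precisely these maximal $D_i$'s, with consecutive components separated by at least one residue outside $A$. In particular $A \subsetneq I$.

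For maximality and uniqueness, suppose $x = y' d_{A'}$ with $\len(x) = \len(y') + |A'|$. I would write $A'$ as the disjoint union of its connected components $B_1, \ldots, B_l$; as these are pairwise disconnected, the factors $d_{B_s}$ commute, so each can be slid to the right to give $x = y'' d_{B_s}$ with length still additive. Writing $B_s = [p_s, q_s]$, the rightmost letter of $d_{B_s}$ is $a_{p_s}$, whence $p_s \in D_R(x)$ and, by the maximality built into the definition of $D_{p_s}$, we get $B_s \subseteq D_{p_s} \subseteq A$. Taking the union over $s$ yields $A' \subseteq A$, so $A$ is the unique maximal set of the required form.

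The main obstacle is existence: showing $x = y\, d_A$ with $\len(x) = \len(y) + |A|$, i.e.\ that all components peel off \emph{simultaneously} rather than merely one at a time. Here I would reuse the value-sequence description developed inside the proof of Lemma~\ref{lem:maxRightSet}: each component $C_r = [i_r, j_r]$ certifies $x_{i_r} > x_l$ for every $l \in \{i_r+1, \ldots, j_r+1\}$. Peeling $d_{C_r}$ off the right corresponds to applying $s_{i_r} s_{i_r+1} \cdots s_{j_r}$ on positions, which bubbles the large value at position $i_r$ outward to position $j_r+1$ and only disturbs the positions in the window $\{i_r, \ldots, j_r+1\} \pmod{k+1}$, each individual swap being length-decreasing exactly because of the inequalities above. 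Since the components are pairwise disconnected, the gap residue following each $C_r$ forces these windows to be pairwise disjoint modulo $k+1$; the peelings therefore act on disjoint coordinate ranges, do not interfere, and may be carried out in any order. This produces $x = y\, d_{C_1} \cdots d_{C_m} = y\, d_A$ with additive length, completing the argument. I expect the delicate point to be precisely the verification that disconnectedness makes these position-windows disjoint, together with the boundary cases (the cyclic wrap-around, and a single component of size $k$ whose window occupies all of $\ZZ_{k+1}$).
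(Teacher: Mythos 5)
Your proof is correct and takes essentially the same route as the paper: both define $A$ as the union of the sets $D_i$ over $i \in D_R(x)$, invoke Lemma~\ref{lem:maxRightSet} to see that the maximal $D_i$'s are pairwise disjoint with disconnected union, and deduce existence and maximality from that structure. The only difference is one of detail, in your favor: the paper simply asserts that the union factors off simultaneously ("Thus, the union of the $A_i$ gives a set $A$ such that $x = y\,d_A$") and disposes of uniqueness in one line, whereas you supply the value-bubbling/disjoint-windows argument for simultaneous peeling and the component-sliding argument showing every right-factorable $A'$ is contained in $A$ --- both of which are genuine (if routine) verifications the paper leaves implicit.
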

\begin{proof}
Consider $D_R(x)$.  For each $i\in D_R(x)$, we can construct a maximal set $A_i=[i, j]$ for some $j$ such that $x=y_i d_{A_i}$.  
By Lemma~\ref{lem:maxRightSet}, if we consider any pair of these sets, they are either disjoint with their union disconnected, or one is contained in the other.  Thus, the union of the $A_i$ gives a set $A$ such that $x=y d_{A}$ for some $y$.  By construction, $A$ is maximal.

For uniqueness, suppose $B$ is another such set.  Then $D_R(d_B)\subset D_R(x)$; by construction of $A$, we have either $A=B$ or $B\subsetneq A$.  Then maximality of $B$ implies $B=A$.
\end{proof}

\begin{corollary}
For any affine permutation $x\in \nilcox_k$, suppose $A$ is the unique maximal $A\subsetneq I$ such that $x=yd_A$.  Suppose $B\subsetneq I$ and $z\in \nilcox_k$ such that $x=yd_A=zd_B$.  Then $B\subset A$.
\end{corollary}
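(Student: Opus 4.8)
The plan is to reduce the claim to the explicit description of $A$ obtained in Corollary~\ref{cor:maxRightSet}, namely $A=\bigcup_{i\in D_R(x)}D_i$, and then to verify the containment one connected component of $B$ at a time. So it suffices to show that each connected component of $B$ lies inside some $D_i$.

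First I would record the reducedness implicit in the hypothesis. Since $x$ is a nonzero element of $\nilcoxmon_k$ and $d_B$ is reduced, the factorization $x=zd_B$ is length-additive: concatenating a reduced word for $z$ with the reduced word for $d_B$ yields a reduced word for $x$. Next I would split $B$ into its connected components $[p_1,q_1],\ldots,[p_r,q_r]$. Because distinct components are non-adjacent in the Dynkin diagram, their cyclically decreasing factors commute and $d_B=\prod_{t}d_{[p_t,q_t]}$. Fixing a component $[p_t,q_t]$ and commuting its factor to the right, I obtain a length-additive factorization $x=w_t\,d_{[p_t,q_t]}=w_t\,a_{q_t}\cdots a_{p_t}$ for some $w_t\in\nilcoxmon_k$.

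The rightmost letter here is $a_{p_t}$, so $x$ has a reduced word ending in $a_{p_t}$ and thus $p_t\in D_R(x)$; moreover, since $[p_t,q_t]$ is connected, the lemma characterizing connected cyclic elements as the $i$-dominant ones gives $D_R(d_{[p_t,q_t]})=\{p_t\}$, confirming that $D_{p_t}$ is defined. I would then argue that the intervals based at $p_t$, meaning those $[p_t,q]$ with $x=y\,a_q\cdots a_{p_t}$ for some $y$, form a nested chain: peeling the leftmost letter off such a word exhibits $[p_t,q-1]$ as another valid interval. Hence every valid interval based at $p_t$ is contained in the maximal one $D_{p_t}$, and in particular $[p_t,q_t]\subseteq D_{p_t}\subseteq A$. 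Taking the union over $t$ gives $B\subseteq A$.

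The main obstacle I anticipate is the bookkeeping around commutation and length-additivity: I must be sure that extracting one component of $B$ as a right factor genuinely preserves reducedness in the cyclic setting, and that the nestedness of the intervals based at $p_t$ is justified so that the maximality of $D_{p_t}$ really dominates $[p_t,q_t]$. This is exactly the point where Lemma~\ref{lem:maxRightSet} and the structure of $d_B$ as a product of commuting components do the essential work; the rest is routine once these are in place.
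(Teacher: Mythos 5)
Your proof is correct, and it relates to the paper's as follows. The paper dispatches this corollary in one sentence, as a direct consequence of the proof of Lemma~\ref{lem:maxRightSet}: the intended mechanism there is the window-notation analysis, where a factorization $x = y\, a_q \cdots a_p$ forces the inequalities $x_p > x_l$ for all $l \in [p+1, q+1]$, so that the maximal interval $D_p$ absorbs any valid interval based at $p$. Your argument has the same skeleton --- split $B$ into connected components, show each component $[p_t, q_t]$ sits inside $D_{p_t}$, and invoke the description $A = \bigcup_{i \in D_R(x)} D_i$ from Corollary~\ref{cor:maxRightSet} --- but it replaces the window-notation inequalities with purely word-level reasoning: the component factors of $d_B$ commute, any nonzero product in $\nilcoxmon_k$ is automatically length-additive, and the peeling argument shows that the valid intervals based at a fixed residue form a nested chain, so each is contained in the maximal one. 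What your route buys is self-containedness: it needs only the definition of $D_i$ and the monoid relations, never the bi-infinite-sequence picture of affine permutations; your nestedness observation also makes explicit why ``the maximal set $D_i$'' is well-defined and dominates every valid interval based at $i$, a point the paper leaves implicit. What the paper's route buys is economy: the window-inequality analysis is needed anyway to prove Lemma~\ref{lem:maxRightSet}, so citing its proof adds no new machinery.
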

\begin{proof}
This is a direct consequence of the proof of Lemma~\ref{lem:maxRightSet}.
\end{proof}

\begin{theorem}
\label{thm:maxDecomposition}
Every affine permutation has a unique maximal decomposition into cyclically decreasing elements.
\end{theorem}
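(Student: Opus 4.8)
The plan is to induct on $\len(x)$, using Corollary~\ref{cor:maxRightSet} (and the corollary immediately following it) to peel off the rightmost factor of the decomposition greedily. Recall that a cyclically decreasing decomposition $d_{\vec{A}} = d_{A_n}\cdots d_{A_1}$ is declared \emph{maximal} when its shape $\sh(\vec{A}) = (|A_1|, \ldots, |A_n|)$ is lexicographically largest among all such decompositions of $x$. Note that since $x\neq 0$ in $\nilcoxmon_k$, every such product is reduced, so the total $\sum_i |A_i| = \len(x)$ is fixed; maximality is therefore purely a matter of front-loading the part sizes.

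First I would handle the base case $\len(x)=0$, where $x$ is the identity and the empty decomposition is trivially the unique maximal one. For the inductive step, let $A\subsetneq I$ be the unique maximal set with $x = y\,d_A$ and $\len(x) = \len(y)+|A|$ furnished by Corollary~\ref{cor:maxRightSet}. The key observation is that in \emph{any} cyclically decreasing decomposition $d_{A_n}\cdots d_{A_1}$ of $x$, the rightmost factor satisfies $x = (d_{A_n}\cdots d_{A_2})\,d_{A_1}$ with additive length, so by the corollary following Corollary~\ref{cor:maxRightSet} we have $A_1\subset A$, hence $|A_1|\leq |A|$ with equality if and only if $A_1 = A$. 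Thus the first coordinate of every shape is at most $|A|$, and equals $|A|$ exactly when the rightmost factor is $d_A$ itself.

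Next I would fix $A_1 = A$ and pass to $y$. Because nonzero elements of $\nilcoxmon_k$ are in bijection with affine permutations, where right-cancellation holds, the element $y$ with $x = y\,d_A$ and additive length is uniquely determined. By the induction hypothesis $y$ has a unique maximal decomposition $d_{A_n}\cdots d_{A_2}$, of lexicographically largest shape $\sigma = (|A_2|,\ldots,|A_n|)$; appending the factor $d_A$ produces the decomposition $x = d_{A_n}\cdots d_{A_2}\,d_A$ of shape $(|A|,\sigma)$. I would then verify this shape is lexicographically maximal for $x$: any competing decomposition with first coordinate strictly below $|A|$ is immediately smaller, while any decomposition with first coordinate equal to $|A|$ must, by the previous paragraph, have rightmost factor exactly $d_A$, hence restricts to a decomposition of $y$ whose shape is $\preceq\sigma$ by induction, so its shape is $\preceq(|A|,\sigma)$. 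Uniqueness follows by the same dichotomy: lexicographic maximality forces $A_1 = A$, and then forces the remaining factors to be the unique maximal decomposition of $y$.

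The main obstacle is the linkage in the second paragraph between the lexicographic order on shapes and the greedy choice dictated by Corollary~\ref{cor:maxRightSet}: one must argue not merely that $|A_1|$ can be taken maximal, but that maximality of $|A_1|$ pins down the set $A_1$ to be exactly the maximal set $A$ (via the containment $A_1\subset A$ together with equal cardinalities), and that fixing this factor determines $y$ uniquely so the induction can engage. Once these two points are secured, the greedy/lexicographic recursion closes cleanly.
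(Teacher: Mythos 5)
Your proof is correct and follows essentially the same route as the paper, which simply says the theorem ``follows immediately by repeated application of Corollary~\ref{cor:maxRightSet}''; your induction on $\len(x)$, with the containment corollary pinning down $A_1=A$ and right-cancellation identifying $y$, is exactly the careful writing-out of that repeated application.
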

\begin{proof}
This follows immediately by repeated application of Corollary~\ref{cor:maxRightSet}.
\end{proof}

\begin{remark}[Algorithm for Computing the Canonical Decomposition.]
\label{rem:algorithm}
The proofs of these results directly translate into an algorithm for finding the canonical decreasing decomposition of any affine permutation $x$.  For each $i\in D_R(x)$ we associate a set $D_i$ obtained by finding the largest connected cyclically decreasing word ending in $u_i$ such that $x = y d_{D_i}$.  Then set $A_1$ to be the union of the sets $D_i$, so that $x = x_1 d_{A_1}$ for some $x_1$.  Repeat this procedure on $x_1$ to obtain $A_2$, and so on.
\end{remark}

\begin{example}
Let $k=7$.  Consider the affine permutation $x$ with base window is $[-4, 1, 2, 0, 5, 14, 7, 11]$.  Then $D_R(x)=\{0, 3, 6\}$.  We form the sets $D_i$: $D_0=\{0,1,2,3\}$, $D_3=\{3\}$, and $D_6=\{6,7,0,1,2,3,4\}$, so that their union $A_1=D_6=\{6,7,0,1,2,3,4\}$.  Then we can find $x_1$ such that $x = x_1 d_{A_1}$.  

This $x_1$ has base window $[1, 2, 0, 5, 6, 7, 11, 4]$.  We have $D_R(x_1)=\{2, 7\}$, and find the sets $D_2=\{2\}$ and $D_7=\{7,0,1,2\}$, so that $A_2 = \{7,0,1,2\}$, and $x_1 = x_2 d_{A_2}$.  

The permutation $x_2$ has base window $[2, 0, 3, 5, 6, 7, 4, 9]$, and $D_R(x_2) = \{6, 1\}$.  Then we form the sets $D_1=\{1\}$ and $D_6=\{6\}$, so that $A_3=\{1, 6\}$.  

Finishing things up, one may derive $A_4=\{5\}$ and $A_5=\{4\}$, so that: 
\[
x = u_{4} u_{5} u_{1, 6} u_{2,1,0,7} u_{4,3,2,1,0,7,6}.
\]  
This is the maximal decomposition of $x$.  This is depicted as a $k$-code filling in Figure~\ref{fig.example1}.

Using a similar algorithm, we may find a cyclically \emph{increasing} decomposition of $x$.  
This decomposition turns out to be $x = u_{5}u_{4}u_{3}u_{2} u_{1,3} u_{0,2} u_{1,7} u_{3,5,6,7,0}$

This is depicted as a $k$-column castle tableau in Figure~\ref{fig.example1}.

\begin{figure}
  \begin{center}
  \includegraphics[scale=0.5]{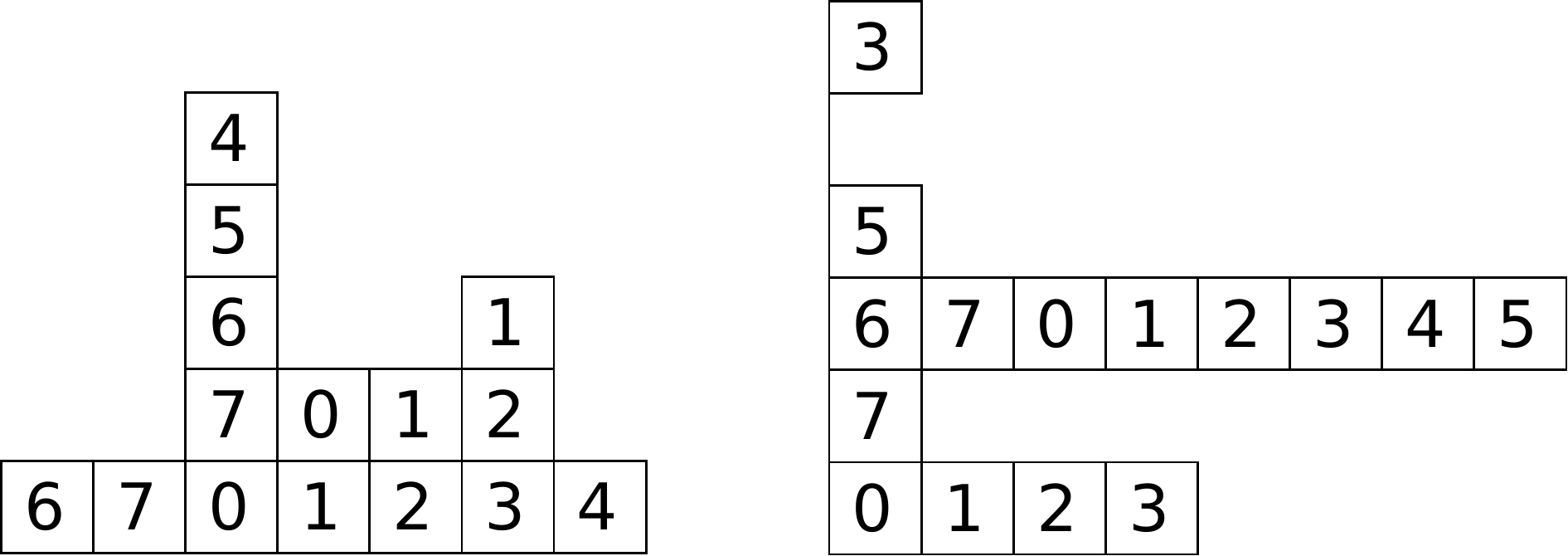}
  \end{center}
  \caption{The maximal cyclically decreasing and increasing $k$-code fillings for the affine permutation with window notation $[-4, 1, 2, 0, 5, 14, 7, 11]$.}
  \label{fig.example1}
\end{figure}
\end{example}

\subsection{Maximizing Moves on $k$-Codes.}
\label{ssec:maxMoves}

Given a non-maximal cyclically decreasing decomposition, there are a number of `moves' we can apply in sequence to obtain the maximal decomposition.  Because of the close link between decompositions and $k$-codes, we will develop these `moves' in tandem in both contexts.  These moves bear some similarity to moves on rc-graphs~\cite{billeyBergeron} or may be thought of as a $k$-bounded variation on jeu de taquin, as they may be used to obtain a $k$-code from a non-maximal $k$-code.

We first examine the action of a single generator applied to a single cyclically decreasing element $d_A$:
\begin{itemize}
\item Commutation.  Suppose $i-1, i, i+1 \not \in A$.  Then $s_{i}d_A=d_{A\cup \{i\}}$.
\item Zero.  Suppose $i\in D_L(d_A)$.  Then $s_{i}d_A=0$.
\item Braid.  Suppose $i\in A, i \not \in D_L(d_A)$.  Then $s_{i}d_A=d_As_{i+1}$.
\end{itemize}
These all follow directly from the definition of the cyclically decreasing elements and the relations in $\nilcox_k$.

Now consider the product of two cyclically decreasing elements, $d_Bd_A$.  Using the above single-generator moves, we establish a number of `moves' for merging elements of $B$ into $A$. This allows us to maximize the vector $(|A|, |B|)$ lexicographically.

\begin{lemma}[Two-Row Moves.]  
\label{lem:twoRowMoves}
The following identities hold for products of cyclically decreasing elements $d_A$ and $d_B$:
\begin{itemize}
\item Commutation.  Suppose $i, i-1, i+1 \not \in A$, and $i\in D_R(d_B)$.  Then $d_Bd_A=d_{B\setminus \{i\}}d_{A\cup \{i\}}$.
\item Chute Move.  Suppose $A=[i,j]$ and $B=[i-1,p]$ with $p\not \in A$ and $j\in B$.  Then $d_Bd_A=d_{B\setminus \{j\}}d_{A\cup \{i-1\}}$.
\item Zero.  Suppose $A=[i,j]$ and $B=[p,q]$ with $p\in A$ and $j\in B$.  Then $d_Bd_A=0$.
\end{itemize}
\end{lemma}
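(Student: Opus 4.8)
The plan is to prove each of the three Two-Row Moves by applying the single-generator moves established just above the lemma, moving residues one at a time from $d_B$ into $d_A$. The key observation is that each move is really a telescoping sequence: I peel a generator off the right end of $d_B$, slide it leftward past $d_A$ using the Commutation and Braid single-generator rules, and either absorb it into $A$ or cancel to zero. I would handle the three cases in increasing order of difficulty.

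For the \textbf{Commutation} move, the argument is immediate. Since $i\in D_R(d_B)$, I may write $d_B = d_{B\setminus\{i\}}\,a_i$ (the cyclically decreasing element ends in $a_i$). Because $i-1,i,i+1\notin A$, the single-generator Commutation rule gives $a_i d_A = d_{A\cup\{i\}}$. Substituting yields $d_B d_A = d_{B\setminus\{i\}}\,a_i\,d_A = d_{B\setminus\{i\}}\,d_{A\cup\{i\}}$, which is exactly the claim. I should check that $A\cup\{i\}$ is a legitimate merge, i.e.\ that the hypothesis $i-1,i+1\notin A$ guarantees $A\cup\{i\}$ stays within the connected-piece structure so that $d_{A\cup\{i\}}$ is well-defined as a cyclically decreasing element; this is exactly what the single-generator Commutation rule asserts.

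For the \textbf{Chute Move}, with $A=[i,j]$ and $B=[i-1,p]$ where $p\notin A$ and $j\in B$, the idea is to push $a_{i-1}$ (the residue just below the bottom of $A$) into $A$ while ejecting $j$ out of $B$. Here $j\in A$ so a direct commutation is blocked; instead I expect to use the Braid single-generator move repeatedly. Concretely, since $i-1\in D_R(d_B)$, write $d_B = d_{B\setminus\{i-1\}}\,a_{i-1}$. Now $a_{i-1}$ meets $d_A$ with $i-1\notin A$ but $i\in A$, so $i-1$ commutes into the bottom of $d_A=d_{[i,j]}$, and the Braid single-generator rule converts the interaction along the run $[i,j]$ into a shift: the net effect of sliding $a_{i-1}$ through $d_{[i,j]}$ is $a_{i-1}d_{[i,j]} = d_{[i-1,j]}\,a_?$ where the leftover generator is absorbed back into what remains of $B$, landing precisely on the index $j$ and removing it. I would verify this bookkeeping by induction on $|A|=j-i+1$, using the identity $a_{i-1}a_i a_{i-1} = a_i a_{i-1} a_i$ (the braid relation) as the base case and peeling one column at a time. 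The conditions $p\notin A$ and $j\in B$ are what guarantee the ejected residue $j$ is available in $B$ to be removed and that no collision occurs at the top.

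For the \textbf{Zero} move, with $A=[i,j]$, $B=[p,q]$, $p\in A$ and $j\in B$, the plan is to show that attempting to merge forces a repeated generator, hence a factor $a_m^2=0$. Since $p\in A=[i,j]$ and $j\in B=[p,q]$, the two runs overlap in an essential way: sliding the bottom generators of $d_B$ leftward into $d_A$ via the Braid single-generator rule eventually produces a generator equal to one already present, triggering the Zero single-generator rule $s_m d_A = 0$ (equivalently $a_m^2=0$). I expect the cleanest route is to track a left descent: the overlap $p\in A$, $j\in B$ means that after commuting and braiding the shared indices together, some $a_m$ satisfies $m\in D_L(d_{A'})$ for the partially-merged element $d_{A'}$, so the Zero single-generator move applies and the whole product collapses.

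The main obstacle will be the Chute Move: verifying that sliding $a_{i-1}$ through the full run $d_{[i,j]}$ produces exactly $d_{[i-1,j]}$ times a single leftover generator landing on $j$, with all intermediate indices accounted for, requires careful indexing. The Commutation and Zero moves are short once the single-generator rules are invoked, but the Chute computation is the genuinely delicate telescoping argument, and I would present it as an induction on the length of the run $[i,j]$ with the braid relation driving the inductive step.
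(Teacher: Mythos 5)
Your Commutation and Zero arguments are sound, and your overall strategy of reducing everything to the single-generator moves is exactly the paper's, whose proof of this lemma is little more than a one-line appeal to ``a sequence of braid and commutation relations.'' For Commutation, peeling $a_i$ off the right end of $d_B$ and invoking the single-generator commutation rule is precisely the intended argument. For Zero, your plan can be made fully explicit and it works: peel the bottom generator $a_p$ of $d_B$; if $p=j$ you are done since $j\in D_L(d_A)$, and otherwise $p\in A$ and $p\notin D_L(d_A)$, so the Braid rule gives $a_p d_A = d_A a_{p+1}$; repeating with $a_{p+1}, a_{p+2},\ldots$, each of which still lies in $A=[i,j]$, you eventually peel $a_j$ (which lies in $B$ by hypothesis) and hit the left descent $j$ of $d_A$, so the single-generator Zero rule kills the product. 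This is a legitimate small variation: the paper instead derives Zero from Chute.

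The genuine gap is in the Chute Move. Your central claim, that sliding gives $a_{i-1}d_{[i,j]} = d_{[i-1,j]}\,a_?$, cannot hold: the left side has $j-i+2$ letters and the right side $j-i+3$, and length is a well-defined invariant of nonzero elements of the nil-Coxeter monoid. Moreover, the Braid single-generator rule cannot drive this sliding at all, since it requires the inserted index to lie in $A$, whereas $i-1\notin[i,j]$; what actually happens is only $a_{i-1}d_{[i,j]} = d_{[i+1,j]}\,a_{i-1}a_i$, which is not cyclically decreasing and ejects nothing onto the index $j$. (A ``leftover'' generator could only remove the $a_j$ sitting in $B$ via $a_ja_j=0$, which would annihilate the whole product --- the opposite of what you want.) The mechanism is in fact the reverse of your picture: the copy of $a_j$ inside $B$ is itself consumed in lengthening the bottom run from $[i,j]$ to $[i-1,j]$. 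The clean repair is as follows. Since $j\in B$ and $p\notin A$, you have $[i-1,j]\subseteq B$, so $d_B = d_{[j+1,p]}\,d_{[i-1,j]}$, and it suffices to prove the core identity $d_{[i-1,j]}\,d_{[i,j]} = d_{[i-1,j-1]}\,d_{[i-1,j]}$ by induction on $j-i$, the base case being the braid relation. For the inductive step, write $d_{[i-1,j]}d_{[i,j]} = a_j\,d_{[i-1,j-1]}\,a_j\,d_{[i,j-1]}$, commute the middle $a_j$ leftward to get $a_ja_{j-1}a_j\,d_{[i-1,j-2]}\,d_{[i,j-1]}$, apply the braid relation to obtain $a_{j-1}a_j\,d_{[i-1,j-1]}\,d_{[i,j-1]}$, invoke the inductive hypothesis, and commute $a_j$ back to the right; this yields $d_{[i-1,j-1]}d_{[i-1,j]}$. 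Since $B\setminus\{j\}$ has components $[j+1,p]$ and $[i-1,j-1]$, this identity gives $d_Bd_A = d_{B\setminus\{j\}}d_{A\cup\{i-1\}}$ as required. Your fallback plan (induction on the run with the braid relation as base case) pointed in the right direction, but the inductive mechanism you described is the step that would fail.
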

\begin{proof}
The commutation rule follows directly from the single generator moves.  The final two identities follow from applying a sequence of braid and commutation relations in the product.  (And in fact, the Zero move can be derived from the Chute Move.)
\end{proof}

The two-row moves translate directly into operations on (skew) $k$-codes.  In the product, $d_B$ and $d_A$ correspond to two stacked rows containing the residues in $B$ and $A$.  The two-row moves are illustrated in Figure~\ref{fig.2rowMoves}.

Given a skew $k$-code $d_Bd_A$, application of a two-row move reduces the size of $B$ by one and increases the size of $A$ by one.  All of the two-row moves are reversible, and so we also have a set of \emph{reverse two-row moves} which increase the size of $B$ by one and reduce the size of $A$ by one.

\begin{figure}
  \begin{center}
  \includegraphics[scale=0.75]{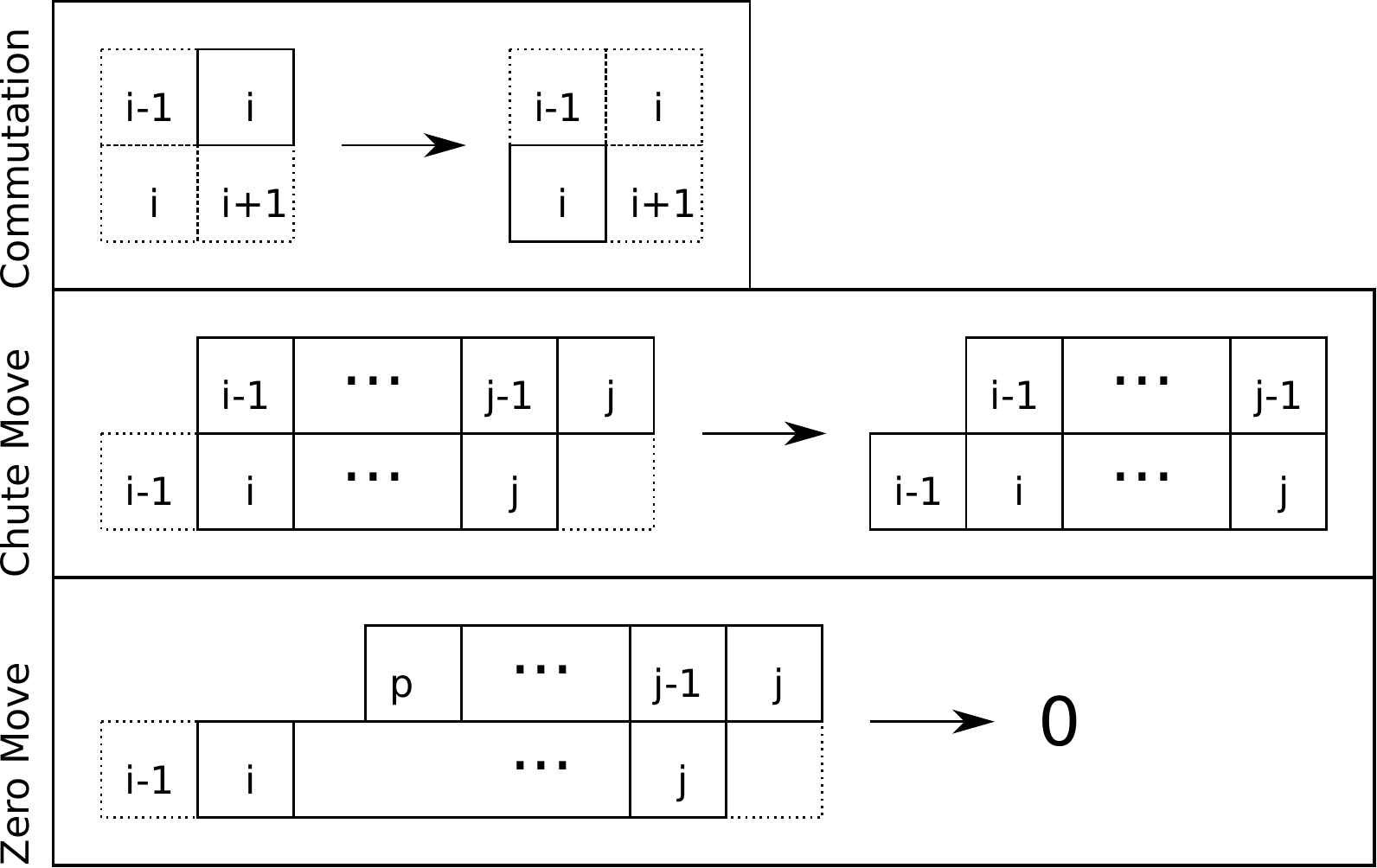}
  \end{center}
  \caption{The two-row moves as they act on $k$-code fillings.}
  \label{fig.2rowMoves}
\end{figure}

We now provide a useful technical lemma with a very nice proof!

\begin{lemma}
\label{lem:2rowEmptyCol}
In any product $d_Bd_A\neq 0$, there exists $j$ such that $j\not \in A$, and $j-1\not \in B$.  In the two-row notation for $d_Bd_A\neq 0$, there is an empty column.
\end{lemma}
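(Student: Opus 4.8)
The plan is to prove the contrapositive: assuming there is no empty column, I would show $d_Bd_A=0$. First I would pin down the dictionary between the two halves of the statement. With the convention that a box of residue $r$ in row $j$ sits in column $r+j$, the bottom row $d_A$ occupies the columns $A+1$ and the top row $d_B$ occupies the columns $B+2$. Hence the column $c=j+1$ is empty exactly when $j\notin A$ and $j-1\notin B$, and ``there is no empty column'' is equivalent to $A\cup(B+1)=I$. So it suffices to show that $A\cup(B+1)=I$ forces $d_Bd_A=0$. Note that this hypothesis already forces $|A|+|B|\ge k+1$, and, since $A,B\subsetneq I$, it forces both $A$ and $B$ to be nonempty (if $A=\emptyset$ then $B+1=I$, contradicting $B\subsetneq I$, and symmetrically).

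The heart of the argument is two observations that fall directly out of the covering hypothesis. First, the top $j$ of every connected component $[i,j]$ of $A$ lies in $B$: by maximality of the component $j+1\notin A$, so the covering condition applied at $j+1$ gives $(j+1)-1=j\in B$; equivalently $D_L(d_A)\subseteq B$. Symmetrically, the bottom $p$ of every connected component $[p,q]$ of $B$ lies in $A$: here $p-1\notin B$, so the covering condition applied at $p$ gives $p\in A$; equivalently $D_R(d_B)\subseteq A$.

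Using these two facts I would produce a \emph{linked} pair of components, meaning a component $[i,j]$ of $A$ and a component $[p,q]$ of $B$ with $i\le p\le j\le q$ cyclically. After rotating a gap of $B$ to a fixed node via the Dynkin automorphism $\Psi$ (which preserves both the covering condition and the vanishing of the product, since $\Psi^m(d_Bd_A)=d_{B+m}d_{A+m}$ and $(A+m)\cup(B+m+1)=I$), the components of $B$ become genuine intervals, and an extremal choice produces such a pair: follow the top of a suitable $A$-component into its $B$-component by the first fact, then follow that $B$-component's bottom back into $A$ by the second. Since the components of $A$ pairwise commute, as do the components of $B$, I can move $d_{[i,j]}$ to the left end of the $d_A$-block and $d_{[p,q]}$ to the right end of the $d_B$-block, writing $d_Bd_A=(\cdots)\,d_{[p,q]}d_{[i,j]}\,(\cdots)$. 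The linking condition $p\in[i,j]$ and $j\in[p,q]$ is exactly the hypothesis of the Zero move of Lemma~\ref{lem:twoRowMoves}, so $d_{[p,q]}d_{[i,j]}=0$, and therefore $d_Bd_A=0$.

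I expect the main obstacle to be the extremal bookkeeping in the last paragraph: verifying that the covering hypothesis always yields a genuinely linked pair (rather than a merely disjoint or nested pair) in the cyclic setting, and checking carefully that isolating the two relevant components through the commutations is legitimate. Everything downstream of producing the linked pair is immediate from the already-established Zero move, which is what makes the proof short.
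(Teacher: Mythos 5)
Your proposal is correct, and it takes a genuinely different route from the paper's own proof. Both argue the contrapositive, but the paper's argument is local: it classifies each column of the two-row diagram into three occupancy states ($\tp$, $\dwn$, $\chrm$), then scans around the cylinder in decreasing residue order, claiming that each transition either lands in a configuration where a commutation or chute move of Lemma~\ref{lem:twoRowMoves} applies (which the paper asserts makes the product unreduced) or propagates a state having a box in one fixed row, so that a full loop forces that row's residue set to be all of $I$, contradicting properness. Your argument is global and structural: you translate ``no empty column'' into the covering condition $A\cup(B+1)=I$, deduce $D_L(d_A)\subseteq B$ and $D_R(d_B)\subseteq A$, and then kill the product with a single application of the Zero move on a linked pair of components, using commutativity of distinct components. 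What your route buys: it isolates the exact structural cause of vanishing and relies only on the one part of Lemma~\ref{lem:twoRowMoves} that actually produces $0$; the paper's scan instead leans on the commutation and chute moves, which by themselves only rewrite a product rather than annihilate it, so those steps of the paper's proof carry an implicit extra justification that your argument avoids entirely. What the paper's route buys: no bookkeeping about connected components, just a column-by-column check.

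The step you flag as the main obstacle does close, and more easily than you fear: no rotation by $\Psi$ and no extremal choice is needed, because any $A$-component succeeds after at most one retry. Take any component $[i,j]$ of $A$ (you already noted $A\neq\emptyset$). By your first observation $j\in B$; let $[p,q]$ be its $B$-component, so $j\in[p,q]$. If $p\in[i,j]$, the pair $([i,j],[p,q])$ is linked and you are done. Otherwise $p\notin[i,j]$, so the backward walk from $j$ to $p$ inside $[p,q]$ passes through all of $[i,j]$ and then through $i-1$; moreover $p\neq i-1$, since your second observation gives $p\in A$ while $i-1\notin A$ by maximality of $[i,j]$. Now let $[i',j']$ be the $A$-component containing $p$. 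The forward walk from $p$ inside $A$ cannot reach $i-1$, so $j'\in[p,i-2]\subseteq[p,j]\subseteq[p,q]$. Hence $p\in[i',j']$ and $j'\in[p,q]$: the pair $([i',j'],[p,q])$ is linked, and the Zero move gives $d_{[p,q]}d_{[i',j']}=0$. The isolation step is legitimate as you say: distinct components of $A$ (resp.\ $B$) are disjoint, non-adjacent intervals, so their factors commute, and $0$ is absorbing in $\nilcoxmon_k$, so the whole product vanishes.
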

\begin{proof}
The two statements are equivalent.  We suppose there is no empty column in the two-row notation, and show that the product $d_Ad_B$ is unreduced.  

Since there is no empty column, we have three possible states for each column.  
\begin{itemize}
\item State $\tp$: $i-1\in B$ and $i-2 \not \in A$,
\item State $\dwn$: $i-1\not \in B$ and $i-2 \in A$, or
\item State $\chrm$: $i-1\in B$ and $i-2 \in A$.
\end{itemize}
Since $B\subsetneq I$, there exists $i\not \in B$; since there is no empty column, this gives $i-1\in A$, so there exists a $\tp$ column.  We now consider each residue $j$ in decreasing order, beginning with $i-1$.  

If the current column is of type $\tp$, one of three cases holds:
\begin{itemize}
\item If $j-1\in B$ and $j-2 \not \in A$, then the product $d_Ad_B$ is unreduced, by the commutation two-row move.
\item If $j-1\in B$ and $j-2 \in A$, the next column if of type $\chrm$.  
\item If $j-1\not \in B$ and $j-2 \in A$, the next column if of type $\tp$.  
\end{itemize}
So the next column is either of type $\chrm$ or $\tp$.

If the current column is of type $\chrm$, one of three cases holds:
\begin{itemize}
\item If $j-1\in B$ and $j-2 \not \in A$, then the product $d_Ad_B$ is unreduced, by the chute move.
\item If $j-1\in B$ and $j-2 \in A$, the next column if of type $\chrm$.  
\item If $j-1\not \in B$ and $j-2 \in A$, the next column if of type $\tp$.  
\end{itemize}
So the next column is either of type $\chrm$ or $\tp$.

So for every column, the next column is of type $\chrm$ or $\tp$.  Both of these cases have the residue $j-1 \in A$, so that every residue must be in $A$. But $A\subsetneq I$, providing a contradiction.  
\end{proof}

For any $S=\{s_1, \ldots, s_i\} \subset I$, let $S-1$ denote the set $\{s_1-1, s_2-1, \ldots, s_i-1\}$.
\begin{lemma}
\label{lem:2rowReduction}
Given two sets $A,B\subsetneq I$ with $d_Bd_A\neq 0$, there exist sets $A', B'$ such that $d_Bd_A=d_{B'}d_{A'}$ and $B'\subset A'-1$.  In particular, in the $k$-code filling for $d_{B'}d_{A'}$, every residue in $B'$ sits directly above a residue in $A'$.
\end{lemma}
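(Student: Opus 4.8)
The plan is to argue by induction on $|B|$, after recording the geometric meaning of the target condition. In the two-row diagram of $d_Bd_A$, a residue $b$ in the top row $B$ occupies the column whose bottom cell would carry the residue $b+1$; hence $B'\subseteq A'-1$ says exactly that every box of the top row is \emph{supported} by a box of the bottom row directly beneath it. The base case $B=\emptyset$ is vacuous. For the inductive step I would assume $B\neq\emptyset$; if $B\subseteq A-1$ already holds we are done with $(A',B')=(A,B)$, so the real work is to show that whenever there is an unsupported top box we can apply a single two-row move that preserves the product $d_Bd_A$, keeps it nonzero, and strictly decreases $|B|$. Since any such move lands us on a smaller nonzero two-row product, the inductive hypothesis then produces the desired $A',B'$.

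To locate a move I would first use Lemma~\ref{lem:2rowEmptyCol} to flatten $d_Bd_A$ at an empty column, so that $A$ and $B$ become subsets of a linear interval and the notions of $B$-component and $A$-component are unambiguous. Given an unsupported box, let $[p,q]\subseteq B$ be the $B$-component containing it, with bottom element $p\in D_R(d_B)$. Two shapes of move arise. If $p$ is itself unsupported and $p\notin A$, then $p,p+1\notin A$ and the commutation move of Lemma~\ref{lem:twoRowMoves} relocates $p$ from $B$ into $A$. If instead the lowest unsupported box sits higher in the component, say $[p,c]$ is supported while $c+1$ is not, then $[p+1,c+1]\subseteq A$ and the $A$-component ending at $c+1$ is forced to begin at $p+1$, so the chute move (equivalently, the underlying braid relation) slides the overhang of $B$ down onto $A$. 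In either case $|B|$ drops by one while the element is unchanged, and induction applies.

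The hard part will be showing that at an unsupported box \emph{some} move is genuinely available, i.e.\ that the alignment hypotheses of the commutation and chute moves are actually met rather than merely plausible. This is exactly where the hypothesis $d_Bd_A\neq 0$ enters, through the zero move of Lemma~\ref{lem:twoRowMoves}: every configuration that would block both candidate moves forces the bottom of the relevant $B$-component into $A$ while the top of the relevant $A$-component lies in $B$, which is precisely the pattern the zero move collapses to $0$, contradicting $d_Bd_A\neq 0$. Concretely, the obstruction $p\in A$ would force $p+1\in A$ (contradicting that $p$ is unsupported) unless the enclosing $A$-component also has its top inside $[p,q]$, in which case the zero move fires; and in the chute case the only way the $A$-component could start below $p+1$ again triggers the zero move. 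I expect this bookkeeping — tracking which residues lie in $A$ versus $B$ and reading off the forced component alignments — to be the bulk of the proof, and it runs closely parallel to the state-machine argument over $\tp$, $\dwn$, and $\chrm$ columns already used in the proof of Lemma~\ref{lem:2rowEmptyCol}, where exactly those two states dictate the commutation-versus-chute alternatives.
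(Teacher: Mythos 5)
Your proposal is correct and follows essentially the same route as the paper: both arguments eliminate unsupported ($\tp$) boxes one at a time via the commutation and chute moves, use the zero move together with the hypothesis $d_Bd_A\neq 0$ to show that a move is always available, and terminate because every move transfers a box from the top row to the bottom row --- your induction on $|B|$ is simply a repackaging of the paper's explicit column-scanning algorithm, which starts from the empty column provided by Lemma~\ref{lem:2rowEmptyCol}. The one caveat, which the paper's own proof shares, is that both of you apply the moves of Lemma~\ref{lem:twoRowMoves} slightly beyond their stated hypotheses (for instance the chute move in the configuration where the top of the $B$-component itself lies in $A$), so that one must fall back on the underlying braid and commutation relations, as you note parenthetically.
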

\begin{proof}
We establish an explicit algorithm for maximizing the product $d_Ad_B\neq 0$ using a sequence of two-row moves.

By Lemma~\ref{lem:2rowEmptyCol}, there exists a residue $i$ such that $i\not \in A, i-1\not \in B$.  
We set $E:=i$ to be the current empty column.  From the current empty column, we will read columns in increasing order.  If the next column is empty, we set $E:=E+1$ to be the current empty column and continue.  Otherwise, we have one of three possibilities for the adjacent column:
\begin{itemize}
\item $\tp$: We apply a commutation move.  The current empty column becomes of type $\dwn$, and the next column becomes empty.  We set the current empty column to the next column.

\item $\dwn$: We set $N=E+1$, and continue reading to the right incrementing $N$ to keep track of the current non-empty column.  If column $N+1$ is of type $\tp$, the product is unreduced.  If column $N+1$ is empty, we set $E:=N+1$ and continue.  If column $N+1$ is of type $\dwn$, we set $N:=N+1$ and continue.  

The last case is when $N+1$ is of type $\chrm$; in this case we keep reading (set $N:=N+1$), but have a new set of possibilities.  The next column $N+1$ may be of type $\chrm$ or $\dwn$, either of which is ok: set $N:=N+1$ and carry on.  If the next column is of type $\tp$, then the product is unreduced by the chute move.  Finally, if the next column is empty, we set that column to be the current empty column $E:=N+1$ and continue.

\item $\chrm$: We set $N:=E+1$ and read columns as in the case $\dwn$.  The only difference is that if we meet a $\tp$ column before meeting a $\dwn$ column, we may apply a chute move.  Then the current empty column becomes of type $\dwn$, and the $\tp$ column becomes empty.  We set the current empty column to be the newly created empty column, and continue.
\end{itemize}

In all cases where a box is moved, a box moves from the top row to the bottom row.  This implies that this process must stabilize at some point.  In all cases we eliminate columns of type $\tp$, so that the final expression will contain no $\tp$ columns.  Thus, $A'\subset B'-1$.
\end{proof}

\begin{example}
Let $k=9$, with $A=\{2,3, 5,8\} and B=\{0,1,2,3,4,7,8\}.$  We find $A', B'$ such that the product $d_{B'}d_{A'}$ is maximal.  We apply a series of moves:
\begin{eqnarray*}
d_B d_A &=& (a_{87} a_{43210}) (a_{8}a_5 a_{32}) \\
        &=& (a_{87} a_{4321}) (a_{8}a_5 a_{32}a_0) \text{(commutation)} \\
        &=& (a_{87} a_{4} a_{21}) (a_{8}a_{5} a_{3210}) \text{(chute move)} \\
        &=& (a_{7} a_{4} a_{21}) (a_{87}a_{5} a_{3210}) \text{(chute move)}. \\
\end{eqnarray*}
Thus, we have $A'=\{0,1,2,3,5,7,8\}$ and $B'=\{1,2,4,7\}$.
\end{example}

For a product of more than two cyclically decreasing elements $d_{\vec{A}}$, we may progressively apply two-row moves to pairs of adjacent cyclically decreasing elements, eventually obtaining a decomposition with $A_{i+1}\subset A_i-1$ for each $i$.  Such a decomposition can be represented by a $k$-code fillings by selecting in row $i$ the residues in $A_i$.  Thus, we have the following Lemma:

\begin{proposition}[Maximal Cyclic Products]
\label{prop:maxCyclic}
For any $w\in \nilcox_k$, if $w=d_{\vec{A}}$ is a maximal expression for $w$ as a cyclically decreasing product then $\vec{A}$ has $A_{i+1}\subset A_i-1$ for each $i$.  In particular, we observe that $\sh(\vec{A})$ is a partition.
\end{proposition}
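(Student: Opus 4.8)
The plan is to argue by contradiction, reducing the global statement about $\vec{A}$ to the two-row analysis already established in Lemma~\ref{lem:2rowReduction}. Suppose $w=d_{A_n}\cdots d_{A_1}$ is a maximal cyclically decreasing expression, and suppose toward a contradiction that $A_{i+1}\not\subset A_i-1$ for some index $i$. The structural fact I want to exploit is that every two-row move (commutation, chute, zero) transfers a box from the upper row into the lower row, and never the reverse; so applying such moves to an adjacent pair can only increase the cardinality of the lower set.

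First I would isolate the adjacent pair $d_{A_{i+1}}d_{A_i}$. This is a cyclically decreasing product of two elements, and it is nonzero because it is a factor of the nonzero product $w$ (in $\nilcox_k$, a product containing a zero factor is zero). Applying Lemma~\ref{lem:2rowReduction} to this pair — with $A_{i+1}$ playing the role of the top set $B$ and $A_i$ the bottom set $A$ — produces sets $A'_{i+1}, A'_i$ with $d_{A_{i+1}}d_{A_i}=d_{A'_{i+1}}d_{A'_i}$ and $A'_{i+1}\subset A'_i-1$. The crucial point is the dichotomy extracted from that lemma's algorithm: the reduction moves boxes only downward, so $|A'_i|\ge|A_i|$ always, and since the algorithm is the identity exactly when there are no $\tp$ columns, equality $|A'_i|=|A_i|$ holds if and only if the pair was already reduced, i.e. $A_{i+1}\subset A_i-1$. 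Our standing assumption $A_{i+1}\not\subset A_i-1$ therefore forces $|A'_i|>|A_i|$.

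Now I would substitute this rewriting back into $w$. Replacing $d_{A_{i+1}}d_{A_i}$ by $d_{A'_{i+1}}d_{A'_i}$ leaves $w$ unchanged and leaves the sets $A_1,\ldots,A_{i-1}$ untouched, yielding another cyclically decreasing expression for $w$ whose shape vector agrees with $\sh(\vec{A})$ in positions $1,\ldots,i-1$ but is strictly larger in position $i$. This shape is lexicographically greater than $\sh(\vec{A})$, contradicting maximality; hence $A_{i+1}\subset A_i-1$ for every $i$. For the final assertion, since $x\mapsto x-1$ is a bijection of $\ZZ_{k+1}$ it preserves cardinality, so $A_{i+1}\subset A_i-1$ gives $|A_{i+1}|\le|A_i|$ for all $i$, and thus $\sh(\vec{A})=(|A_1|,\ldots,|A_n|)$ is weakly decreasing, i.e. a partition.

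The step I expect to be the main obstacle is the precise bookkeeping tying ``the pair is unreduced ($A_{i+1}\not\subset A_i-1$)'' to ``at least one box actually moves,'' that is, verifying that the reduction of Lemma~\ref{lem:2rowReduction} acts as the identity exactly when no $\tp$ column is present. Once this local dichotomy is secured, the remaining pieces — validity of the substitution inside the full product, invariance of the earlier coordinates, and the lexicographic comparison — are routine.
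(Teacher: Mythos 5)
Your proof is correct and takes essentially the same route as the paper: the paper also derives Proposition~\ref{prop:maxCyclic} from the two-row moves of Lemma~\ref{lem:2rowReduction}, observing (explicitly in the proof of Theorem~\ref{thm:canoncialDecomposition}) that any adjacent pair violating $A_{i+1}\subset A_i-1$ admits a two-row move yielding a lexicographically greater shape, contradicting maximality. Your careful verification that the reduction algorithm of Lemma~\ref{lem:2rowReduction} moves at least one box downward exactly when a $\tp$ column is present merely spells out a point the paper leaves implicit.
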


Thus, we have shown that any reduced decomposition may have a series of two-row moves applied to it to obtain a decomposition corresponding to a $k$-bounded tableau.

\subsection{Insertion Algorithm.}
\label{ssec:insertion}

Consider $x$ an affine permutation with $x=d_{\vec{A}}$, $\vec{A}=\{A_1, \ldots, A_n\}$, giving the maximal decomposition of $x$.  We consider the product $xa_p$ for $p\not \in D_R(x)$, and find an algorithm for determining the $k$-code filling $T'$ for $xa_p$.  To do this, we attempt to insert the residue $p$ into the set $A_j$, beginning with $j=1$.  One of following possibilities occurs:
\begin{itemize}
\item (Inclusion I.)   If $p-1, p, p+1 \not \in A_j$: By the commutation relation, we may include $p$ into $A_j$.  Include $p$ into $A_j$ and halt the algorithm.
\item (Inclusion II.)   If $p-1, p \not \in A_j$, but $p+1\in A_j$:  We have $p+1\in D_R(A_j)$, and may include $p$ into $A_j$.  So again, include $p$ into $A_j$ and halt the algorithm.
\item (Bump Move.)   If $p \not \in A_j$ and $p-1 \in A_j$, we have $d_{A_j}u_p = u_{p-1}d_{(A_j\setminus \{p-1\})\cup \{p\}}$.  In other words, bump the residue $p-1$ from $A_j$ and replace it with the residue $p$.
\item (Braid Move.)  If $p \in A_j$ but $p \not \in D_R(A_j)$, then $d_{A_j}u_p=u_{p-1}d_{A_j}$ by a braid relation.  In this case, leave $A_j$ unchanged and continue the process, trying to insert $p-1$ into $A_{j+1}$.
\end{itemize}
These cover all possibilities.  When the product is non-zero, this gives us a way to insert a new box into the $k$-code.

We remove the explanations from the different cases to obtain a reduced list of insertion moves:
\begin{itemize}
\item (Inclusion.)   If $p-1, p \not \in A_j$:  Include $p$ into $A_j$ and halt the algorithm.
\item (Bump Move.)   If $p \not \in A_j$ and $p-1 \in A_j$, Remove the residue $p-1$ and include residue $p$ in $A_j$.  Continue the insertion with the residue $p-1$ into row $j+1$.
\item (Braid Move.)  If $p \in A_j$ but $p \not \in D_R(A_j)$:  Leave row $A_j$ unchanged, and continue the insertion algorithm with residue $p-1$ into row $j+1$.
\end{itemize}

\begin{definition}
Let $S$ be a $k$-code and $p$ a residue.  We denote the insertion of $p$ into $S$ by $S \leftarrow p$.  
\end{definition}

Notice that in both the braid move and the bump move, the residue $p-1$ is in the (possibly modified) $A_j-1$.  As a result, inserting a residue $i$ into a $k$-code will produce another $k$-code, so long as the product $xu_i\neq 0$.  Luckily, we can use Corollary~\ref{cor:descents} to read off the right descents of $x$ from its $k$-code, making it immediately clear whether a given value can be inserted or not.

We may form a \emph{recording tableau} $Q$ in the usual way.  Suppose $w=[w_1, \ldots, w_n]$ is a word in the alphabet $I$ which inserts to a $k$-code $P$.  On inserting the $j$th letter of $w$, we write a $j$ in the final box in the insertion of $[w_1, \ldots, w_j]$.  The only special case is the bump move, which replaces the box with residue $p$ with the box with residue $p+1$.  Suppose the residue $p$ box was marked with an $l$ in the recording tableau: We simply put this $l$ in the box with residue $p+1$ and delete the box with residue $p$.  (This is illustrated in Figure~\ref{fig.affineInsert})

Any reduced word $w$ for a given permutation $x$ may be inserted to the empty $k$-code to obtain a tableau $Q$ which depends on the reduced word that was inserted.  In fact, all of the insertion moves are invertible, allowing a reverse insertion algorithm.  Then given a recording tableau $Q$ one may recover the reduced expression $w$. 

\begin{theorem}
\label{thm:redWords}
Let $\mathfrak{Q}=\{Q_i\}$ be the set of recording tableaux associated to a $k$-code $\alpha$ obtained from a maximal decomposition of an affine permutation $x$. Then $\mathfrak{Q}$ is in bijection with the set of reduced words for $x$.
\end{theorem}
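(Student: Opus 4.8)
The plan is to run the standard recording-tableau argument, with the insertion map $S\leftarrow p$ playing the role of RSK insertion and its inverse as the engine. Forward insertion of a word $w=(w_1,\dots,w_n)$ produces a pair $(P_w,Q_w)$, where $P_w$ is a $k$-code and $Q_w$ is a standard recording tableau of the same shape, and reverse insertion, built from the invertibility of the Inclusion, Bump, and Braid moves, recovers $w$ from $(P_w,Q_w)$. First I would record that forward insertion is defined exactly on reduced words: a move becomes unavailable (equivalently, the tentative product is $0$) precisely when one tries to right-multiply by a generator lying in the current right descent set, so the insertion of $w$ succeeds if and only if $a_{w_1}\cdots a_{w_n}\neq 0$, i.e.\ if and only if $w$ is reduced.

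The crux is to show that every reduced word $w$ for $x$ inserts to the same $k$-code, namely $\alpha$. Each insertion move is an identity in $\nilcox_k$ --- the moves are exactly the commutation, bump $d_{A_j}a_p=a_{p-1}d_{(A_j\setminus\{p-1\})\cup\{p\}}$, and braid relations --- so after inserting all of $w$ the rows of $P_w$ multiply back to $a_{w_1}\cdots a_{w_n}=x$; moreover each move preserves the shifted-containment property $A_{i+1}\subset A_i-1$, so the diagram of $P_w$ has the partition shape characteristic of a $k$-code. It then remains to verify that this shifted-containment decomposition of $x$ is the \emph{maximal} one, whence $P_w=\alpha$ by the uniqueness in Theorem~\ref{thm:maxDecomposition}. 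I would establish this through an Insertion-Correctness lemma, proved by induction on $|w|$: if $S$ is the $k$-code of the maximal decomposition of $y$ and $ya_p\neq 0$, then $S\leftarrow p$ is the $k$-code of the maximal decomposition of $ya_p$. The inductive step amounts to checking that the bottom row of $S\leftarrow p$ equals the unique maximal right set of $ya_p$ furnished by Corollary~\ref{cor:maxRightSet}, after which the remaining rows form a shifted-containment decomposition of the corresponding left quotient, to which the inductive hypothesis applies.

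Granting $P_w=\alpha$ for every reduced word $w$ of $x$, the assignment $w\mapsto Q_w$ maps the reduced words of $x$ into $\mathfrak{Q}$. Injectivity is immediate: reverse insertion applied to $(\alpha,Q_w)$ returns $w$, so distinct reduced words yield distinct recording tableaux. For surjectivity, take $Q\in\mathfrak{Q}$ and let $w$ be the word produced by reverse insertion from $(\alpha,Q)$; forward insertion of $w$ then recovers $(\alpha,Q)$ by invertibility. Since the number of boxes of $\alpha$ equals $\len(x)$, the word $w$ has length $\len(x)$, and its product is the element represented by $\alpha$, namely $x$; a word of length $\len(x)$ whose product is $x$ is necessarily reduced, so $w$ is a reduced word for $x$ with $Q_w=Q$. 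Hence $w\mapsto Q_w$ is the asserted bijection.

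The main obstacle is the maximality-preservation step: everything else is the formal bookkeeping of an invertible insertion together with the length/box-count identity. I expect the work to concentrate on the Insertion-Correctness lemma, specifically on showing that the greedy ``fit into the lowest admissible row'' behaviour of the algorithm reproduces exactly the maximal right set of Corollary~\ref{cor:maxRightSet} at each step; the Bump move, which cascades a displaced residue upward into higher rows, is where I anticipate the delicate casework.
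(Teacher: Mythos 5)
Your proposal is correct and follows essentially the same route as the paper: the paper states this theorem with no separate proof, relying on the immediately preceding observations that any reduced word for $x$ inserts to the $k$-code $\alpha$ and that all insertion moves are invertible, so that reverse insertion from $(\alpha,Q)$ recovers the word. In fact your write-up supplies more detail than the paper does---notably the insertion-correctness lemma (every reduced word of $x$ inserts to the \emph{same} $P$-tableau, namely the maximal decomposition $\alpha$), a point the paper leaves implicit, and your proposed induction via Corollary~\ref{cor:maxRightSet} is the natural way to discharge it.
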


Call a recording tableau $Q$ \emph{standard} if it arises as the recording tableau of some reduced expression for an affine permutation.  Then it is clear that there is a bijection between standard recording tableaux of a given shape and reduced expressions for the affine permutation with the associated $k$-code.

\begin{problem}
Find a combinatorial description of the recording tableaux.
\end{problem}

\begin{example}
At $k=1$, there are no relations between the generators $u_0$ and $u_1$.  In this case, there are exactly two $k$-codes of size $n$ ($(n,0)$ and $(0,n)$), and exactly two non-zero words on $n$ letters, one with right descent $0$ and one with right descent $1$.
\end{example}

\begin{example}
With $k=3$, let $w=[0,3,1,2,1,0]$.  Then the insertion of $w$ is the $k$-code $\alpha=(2,1,3,0)$.  But the recording tableau has first row $[2,5,1]$, which is not standard in the usual sense for tableaux.
\end{example}

\begin{figure}
  \begin{center}
  \includegraphics[scale=0.5]{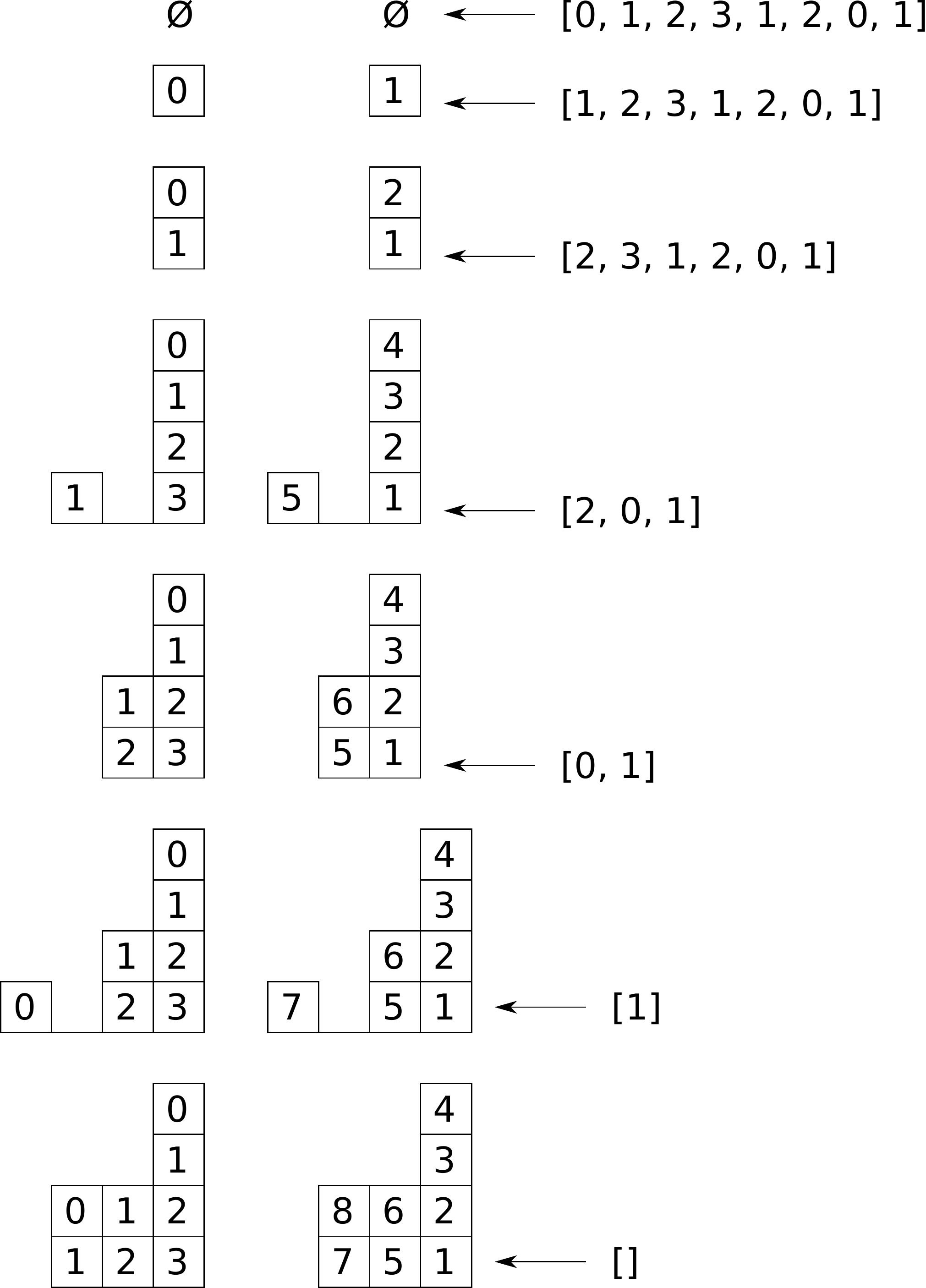}
  \end{center}
  \caption{The insertion algorithm for $k=4$ with the word $w=[0, 1, 2, 3, 1, 2, 0, 1]$.  The left tableaux are the $k$-code fillings obtained at each step, and the right tableaux are the recording tableaux.}
  \label{fig.affineInsert}
\end{figure}

\subsection{Bijection Between Affine Permutations and $k$-Codes.}
\label{ssec:bijection}

Our goal in this section is to prove the bijection between $k$-codes and affine permutations.  We begin by restating the results of Theorem~\ref{thm:maxDecomposition} and Proposition~\ref{prop:maxCyclic} in a consolidated statement:
\begin{theorem}[Canonical Cyclically Decreasing Decomposition.]
\label{thm:canoncialDecomposition}
Every affine permutation $x$ admits a unique maximal decomposition as a product of cyclically decreasing elements $x=d_{\vec{A}}$.  This decomposition has $A_{i+1}\subset A_i-1$ for each $i$, and thus $\sh(\vec{A})$ is a partition.
\end{theorem}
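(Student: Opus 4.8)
The plan is to obtain this statement as a direct consolidation of the two results it references, so the work consists almost entirely of matching up their hypotheses rather than proving anything new. Existence and uniqueness of the maximal decomposition are precisely Theorem~\ref{thm:maxDecomposition}, the structural claim $A_{i+1}\subset A_i-1$ is precisely Proposition~\ref{prop:maxCyclic}, and the partition claim is then immediate: $A_{i+1}\subset A_i-1$ forces $|A_{i+1}|\le|A_i|$, so $\sh(\vec A)=(|A_1|,\ldots,|A_n|)$ is weakly decreasing.

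For existence and uniqueness I would unwind the greedy construction of Remark~\ref{rem:algorithm}. Applying Corollary~\ref{cor:maxRightSet} to $x$ produces the unique maximal $A_1\subsetneq I$ with $x=x_1 d_{A_1}$ and $\len(x)=\len(x_1)+|A_1|$; iterating on $x_1$ yields $A_2$, then $A_3$, and so on. Termination is immediate since $\len(x_i)$ strictly decreases at each step. The uniqueness at each stage is exactly the uniqueness asserted in Corollary~\ref{cor:maxRightSet}, combined with the fact that once the rightmost factor $A_1$ is pinned down the complementary element $x_1$ is determined (nonzero products in $\nilcox_k$ correspond to reduced products in the affine symmetric group, where right-cancellation holds); hence the entire chain $\vec A$ is forced.

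The only point requiring genuine care --- and where I would concentrate the argument --- is reconciling the two senses of ``maximal'' in play. Proposition~\ref{prop:maxCyclic} presumes that $d_{\vec A}$ is \emph{lexicographically maximal in shape}, whereas the greedy construction maximizes the \emph{size of the rightmost factor} at each stage. These coincide: for any cyclically decreasing expression $x=d_{B_m}\cdots d_{B_1}$ (automatically reduced, being nonzero), the corollary immediately following Corollary~\ref{cor:maxRightSet} gives $B_1\subset A_1$, so $|B_1|\le|A_1|$ and the greedy choice maximizes the first entry of the shape vector; recursing on $x_1$ then maximizes the next entry subject to the first being maximal, which is exactly the lexicographic order on $\sh(\vec A)$. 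Consequently the greedily built decomposition is the lexicographically maximal one, Proposition~\ref{prop:maxCyclic} applies verbatim to it, and the containment $A_{i+1}\subset A_i-1$ together with the partition conclusion follow.
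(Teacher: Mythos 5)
Your proof is correct and takes essentially the same route as the paper: the paper's own proof is precisely repeated application of Corollary~\ref{cor:maxRightSet} (the greedy construction) together with the two-row-move result of Proposition~\ref{prop:maxCyclic}. Your careful reconciliation of greedy rightmost-factor maximization with lexicographic maximality of $\sh(\vec{A})$ is a point the paper glosses over with ``by construction, this decomposition is maximal,'' so that extra step is a welcome refinement rather than a different approach.
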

\begin{proof}
This follows from repeated application of Corollary~\ref{cor:maxRightSet} to obtain a complete decomposition of the affine permutation $x$ as a product of maximal cyclically decreasing elements.  By construction, this decomposition is maximal.  It must also satisfy $A_{i+1}\subset A_i-1$ for each $i$, or else we could apply a two-row move to obtain a new decomposition greater in lexicographic order.  
\end{proof}

\begin{definition}
We refer to the maximal decomposition of $x$ as the \emph{canonical decreasing decomposition} of $x$, denoted $\RD(x)$.  The corresponding maximal decomposition into cyclically increasing elements is the \emph{canonical increasing decomposision} of $x$, denoted $\RI(x)$.

We define a map $\sigma$ from affine permutations to $k$-code fillings.  For $x$ an affine permutation with canonical decreasing decomposition $x=d_{\vec{A}}$, take $\sigma(x)$ to be the $k$-code filling whose $i$th row is given by the set of residues $A_i$.
\end{definition}

\begin{definition}
A \emph{descent} of a $k$-code $\alpha$ is an index $i$ such that $\alpha_{i-1}<\alpha_{i}$.
\end{definition}

\begin{corollary}[Descent Sets from $k$-code fillings.]
\label{cor:descents}
Given a maximal $k$-code filling $T=\sigma(x)$ for an affine permutation $x=d_{\vec{A}}$, then $r\in D_R(x)$ if and only if $r$ appears in the first row of $T$ and the column containing this box contains a right descent for one of the $d_{A_i}$.
\end{corollary}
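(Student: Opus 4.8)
The plan is to test, for each residue $r$, whether $r\in D_R(x)$ by asking whether $xa_r=0$ in $\nilcox_k$, and to perform that test with the insertion algorithm of Subsection~\ref{ssec:insertion} applied to $T=\sigma(x)$. By definition $r\in D_R(x)$ iff $xa_r=0$. The insertion moves (Inclusion, Bump, Braid, and the complementary Zero case $p\in D_R(d_{A_j})$) are just identities in $\nilcox_k$, so running the insertion of $r$ into $T$ faithfully rewrites $xa_r$ and terminates either in an Inclusion, producing a genuine $k$-code (so $xa_r\neq 0$), or in a Zero move (so $xa_r=0$). Thus the whole proof reduces to showing that the insertion of $r$ triggers a Zero move if and only if $r$ lies in the first row of $T$ and the column through that box carries a right descent of some $d_{A_j}$.

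First I would establish the geometric backbone: the insertion of $r$ never leaves a single column. Inserting residue $\rho$ into row $j$ concerns the diagram box in row $j$ with residue $\rho$, which sits in column $\rho+j$; since every Bump or Braid step replaces the pair $(\rho,j)$ by $(\rho-1,j+1)$, the quantity $\rho+j$ is invariant and equals $r+1$, the column $c$ of the starting box (row $1$, residue $r$). So the algorithm marches straight down column $c$. Next I would invoke column contiguity: by Theorem~\ref{thm:canoncialDecomposition} we have $A_{j+1}\subset A_j-1$, which forces a box at $(\text{row }j,\text{column }c)$ to be present only when the box at $(\text{row }j-1,\text{column }c)$ is present. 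Hence the boxes of column $c$ occupy exactly the rows $1,\dots,\alpha_c$ contiguously, carrying residues $r,r-1,\dots$, and the insertion visits them in order.

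With this in hand the mechanics decide everything. At each \emph{present} box, say $(\text{row }j,\text{column }c)$ with residue $\rho=c-j\in A_j$, the insertion is in the Braid case when $\rho\notin D_R(d_{A_j})$ (continue) and in the Zero case when $\rho\in D_R(d_{A_j})$ (halt with $0$); no Bump or Inclusion can occur while a box is present, so the tableau is not modified before any Zero is reached. Therefore the insertion triggers a Zero exactly when column $c$ contains a box that is a right descent of its row's $d_{A_j}$, halting at the topmost such box after braiding through the non-descent boxes above it; if no present box is a right descent, we braid through all of column $c$ and then enter its empty part, where only Bump and Inclusion occur and the process terminates nonzero. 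Finally, a nonempty column $c$ has a box in row $1$ of residue $r$, so ``column $c$ contains a right descent'' already forces $r\in A_1$, while an empty column has no descent at all; combining these yields the stated equivalence. The hard part is the third step: one must check carefully that while descending through the present boxes of column $c$ the insertion never executes a Bump, which would alter $T$ and sever the link between the algorithm's trajectory and the boxes of the \emph{original} diagram. This is precisely what contiguity, i.e. $A_{j+1}\subset A_j-1$, guarantees, and it is the crux on which the whole argument turns.
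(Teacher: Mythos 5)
Your analysis of the insertion trajectory is correct and carefully done: the pair $(\rho,j)$ does march down the single column $c$ determined by $r$, maximality ($A_{j+1}\subset A_j-1$, Theorem~\ref{thm:canoncialDecomposition}) does make the boxes of that column contiguous from the top, at a present box only a Braid or a Zero can fire (Bump and Inclusion require the box to be absent), and once a Bump occurs no Zero can ever occur afterwards, since containment forces all subsequently visited boxes to be absent. This settles one direction of the corollary completely: if some box of column $c$ is a right descent of its row, the insertion reaches it with the tableau unmodified and executes the Zero move, and since every move is an identity in $\nilcox_k$ this gives $xa_r=0$, i.e.\ $r\in D_R(x)$.

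The gap is in the converse, hidden in your parenthetical ``producing a genuine $k$-code (so $xa_r\neq 0$).'' When the insertion terminates in an Inclusion, your chain of identities shows only that $xa_r=d_{\vec{A}'}$ for a collection $\vec{A}'$ satisfying the containment condition; to conclude $xa_r\neq 0$ you must know that the reading word of an arbitrary $k$-code is nonzero in $\nilcox_k$. In the nil-Coxeter monoid a product of nonzero elements can perfectly well be zero, so this is not automatic, and at this point in the paper it is not available: it is precisely the hard half of Theorem~\ref{thm:compTabBijection}, which is proved \emph{after} Corollary~\ref{cor:descents} by an induction whose key step (``this $x$ has $j\notin D_R(x)$, so $xu_j\neq 0$'') invokes Corollary~\ref{cor:descents} itself, as does Corollary~\ref{cor:containment}. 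So your argument, as written, is circular. Note that the paper is deliberately careful on exactly this point: it only claims that insertion produces a $k$-code \emph{when the product is already known to be nonzero}, and it uses Corollary~\ref{cor:descents} to decide nonvanishing in advance rather than reading nonvanishing off from the insertion's outcome. To repair the ``only if'' direction you need an argument independent of the bijection theorem --- for instance, $r\in D_R(x)$ forces $r\in A_1$ directly from the construction in Corollary~\ref{cor:maxRightSet} (the maximal first row contains all right descents), and the existence of a descent box in column $c$ must then be extracted from the permutation-level (window notation) description of descents as in the proof of Lemma~\ref{lem:maxRightSet}, rather than from the unproved nonvanishing of $d_{\vec{A}'}$.
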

\begin{proof}
These descents occur by repeated use of the braid relation to move a right descent in $d_{A_i}$ to the beginning of a reduced expression for $x$.
\end{proof}

\begin{corollary}
\label{cor:containment}
Let $x$ be an affine permutation with decomposition $x=d_{\vec{A}}$.  Then this decomposition is maximal if and only if $A_{i+1} \subset A_i-1$ for every $i$.
\end{corollary}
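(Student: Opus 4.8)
The forward implication is already available: it is exactly Proposition~\ref{prop:maxCyclic}, which says that a maximal cyclically decreasing expression $x=d_{\vec A}$ satisfies $A_{i+1}\subset A_i-1$ for every $i$. So all of the work lies in the converse, and my plan is to collapse ``maximal'' and ``containment'' into a single condition by showing that a decomposition satisfying containment must coincide with the canonical decomposition produced by Theorem~\ref{thm:canoncialDecomposition}. That canonical decomposition is maximal and, by the same theorem, also satisfies containment; hence once I prove that a containment decomposition of $x$ is \emph{unique}, it follows that any such decomposition equals the canonical one and is therefore maximal.

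To prove uniqueness I would induct on $\len(x)$. Given $x=d_{\vec A}$ with $A_{i+1}\subset A_i-1$, write $x=y\,d_{A_1}$ where $y=d_{A_n}\cdots d_{A_2}$; the truncated data $(A_2,\dots,A_n)$ again satisfies containment and represents the shorter element $y$, so by induction it is the unique containment decomposition of $y$. Thus it suffices to show that the bottom set $A_1$ is determined by $x$ alone — concretely, that $A_1$ equals the unique maximal right-set $A$ of Corollary~\ref{cor:maxRightSet}. One inclusion is free: the factorization $x=y\,d_{A_1}$ exhibits $A_1$ as a right-set, and the corollary following Lemma~\ref{lem:maxRightSet} (every right-set is contained in the maximal one) gives $A_1\subset A$. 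Once the equality $A_1=A$ is in hand, peeling off $d_{A_1}$ reduces to the induction hypothesis and finishes the argument.

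The main obstacle is the reverse inclusion $A\subset A_1$: the containment hypothesis must prevent the bottom row from being too small. A first, encouraging check is that containment is a normal form for the two-row moves of Lemma~\ref{lem:twoRowMoves}: if $p\in A_2$ then $A_2\subset A_1-1$ forces $p+1\in A_1$, which blocks the commutation move, and when $A_1=[i,j]$ the constraint $A_2\subset[\,i-1,j-1\,]$ is incompatible with the configuration $A_2=[\,i-1,p\,]$, $j\in A_2$ demanded by the chute move. The delicate point I expect to fight with is \emph{confluence}: ruling out a single move between consecutive rows does not by itself preclude a lex-greater competitor, since a box might descend to row $1$ only after intermediate commutations reshuffle the upper rows. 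To close this gap cleanly I would route the argument through the insertion algorithm of Section~\ref{ssec:insertion} rather than through abstract move-chasing. For each $i\in D_R(x)$ I would trace the cascade computing $x a_i=0$ and use the containment inequalities (exactly the ``Inclusion / Bump / Braid / Zero'' case analysis) together with the description of the maximal interval $D_i=[i,m]$ from Lemma~\ref{lem:maxRightSet} to show that every residue of $D_i$ already lies in $A_1$; taking the union over $i\in D_R(x)$ yields $A=\bigcup_i D_i\subset A_1$, hence $A=A_1$, completing the induction. Corollary~\ref{cor:descents} should be helpful here, as it lets me read $D_R(x)$ directly off the filling and thereby keep the cascade analysis bookkeeping under control.
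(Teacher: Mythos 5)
Your proposal is correct and takes essentially the same route as the paper: the forward direction is Proposition~\ref{prop:maxCyclic}, and for the converse the paper likewise identifies the bottom row $A_1$ with the union $B_1$ of the maximal descent intervals $D_i$ produced by the algorithm of Remark~\ref{rem:algorithm}, invoking Corollary~\ref{cor:descents}, and then ``repeats the process'' row by row --- exactly your induction with $A_1$ peeled off. The confluence difficulty you flag is real, but it is precisely the step the paper dispatches with its one-line appeal to Corollary~\ref{cor:descents}, so your cascade analysis through the insertion algorithm is a more careful execution of, not a departure from, the paper's argument.
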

\begin{proof}
The forward direction is given by Lemma~\ref{prop:maxCyclic}.

On the other hand, if $A_{i+1} \subset A_i$ for all $i$, we may apply the algorithm in Remark~\ref{rem:algorithm} to obtain a maximal decomposition $x=d_{\vec{B}}$.  We can also associate a $k$-code filling $T$ to the decomposition $d_{\vec{A}}$.  The algorithm constructs sets $D_j$ for each $j\in D_R(x)$ and takes $B_1$ to be the union of the sets $D_j$.  By Corollary~\ref{cor:descents}, we may then observe that $A_1=B_1$.  We may then repeat this process to show that $A_i=B_i$ for every $i$.  Thus, $d_{\vec{A}}$ is the maximal decomposition of $x$.
\end{proof}

\begin{theorem}
\label{thm:compTabBijection}
The set of $k$-codes is in bijection with affine permutations in $\AS_{k+1}$.
\end{theorem}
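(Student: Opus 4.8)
The plan is to show that the map $\sigma$ from affine permutations to $k$-code fillings, already defined via the canonical decreasing decomposition, is the desired bijection. I would organize the argument into three parts: well-definedness of $\sigma$ into the set of $k$-codes, injectivity, and surjectivity.

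For well-definedness, take $x$ with canonical decomposition $x = d_{\vec{A}}$. By Theorem~\ref{thm:canoncialDecomposition} the rows satisfy $A_{i+1}\subset A_i-1$, and I would verify this forces $\sigma(x)$ to be a genuine $k$-code. Under the filling convention (the box in column $c$, row $j$ carries residue $c-j$), a residue $r\in A_i$ sits in column $r+i$; the containment $A_{i+1}\subset A_i-1$ says precisely that $r\in A_{i+1}$ implies $r+1\in A_i$, so a box in column $c$ of row $i+1$ forces a box in column $c$ of row $i$. Hence every column is filled contiguously upward from the first row and the diagram is a valid Ferrer's diagram on the cylinder. Since $A_1\subsetneq I$, some residue $s\notin A_1$, so column $s+1$ is empty in the first row and therefore empty altogether; thus $\sigma(x)$ has a zero part, as required. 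Injectivity is then immediate from uniqueness: $\sigma$ factors as $x\mapsto \vec{A}\mapsto$ filling, where the first arrow is the \emph{unique} maximal decomposition and the second merely records which residues lie in each row, so $\sigma(x)=\sigma(y)$ forces $\vec{A}(x)=\vec{A}(y)$ and hence $x=y$.

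Surjectivity is where the real work lies, and I would phrase it as constructing an inverse $\tau$. Given a $k$-code $\alpha$, read off its rows as sets $A_i\subsetneq I$, obtaining $\vec{A}$ with $A_{i+1}\subset A_i-1$ by column-convexity of the diagram, and set $\tau(\alpha)=d_{\vec{A}}$. To conclude $\sigma(\tau(\alpha))=\alpha$ it suffices to prove that $d_{\vec{A}}$ is reduced, that is $\len(d_{\vec{A}})=\sum_i|A_i|$: once reduced, Corollary~\ref{cor:containment} shows the shifted-containment decomposition is maximal, hence canonical, so $\sigma$ returns exactly $\alpha$, while $\tau(\sigma(x))=x$ holds by definition.

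The main obstacle is this reducedness, and I expect it to be the crux of the whole theorem. I would prove it by induction on the number of rows: with $w=d_{A_n}\cdots d_{A_2}$ reduced by induction (its rows still satisfy shifted containment), the task is to show that appending $d_{A_1}$ on the right is length-additive. The two-row analysis gives the leverage here: Lemma~\ref{lem:2rowEmptyCol} shows that an unreduced product of cyclically decreasing elements manifests either as an absent empty column or as a configuration admitting a Zero two-row move from Lemma~\ref{lem:twoRowMoves}, and I would argue that neither can arise when $A_2\subset A_1-1$ and $A_1\subsetneq I$, using Corollary~\ref{cor:maxRightSet} to identify $A_1$ as the maximal right set of $w\,d_{A_1}$ and so force length-additivity. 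Should the direct count prove delicate, a cleaner alternative is induction on the number of boxes: remove an outer corner of $\alpha$ to obtain a smaller $k$-code $\alpha'=\sigma(x')$, then recover $\alpha$ by the Inclusion step of the insertion algorithm of Section~\ref{ssec:insertion} applied to the corner's residue $p$, where Corollary~\ref{cor:descents} guarantees $p\notin D_R(x')$ so that $x'a_p\neq 0$ and $\sigma(x'a_p)=\alpha$ by construction of the insertion.
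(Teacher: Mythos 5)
Your skeleton matches the paper's: well-definedness and injectivity come from Theorem~\ref{thm:canoncialDecomposition} and recoverability of $x$ from its filling, and everything reduces to surjectivity, i.e.\ to showing that the reading word of an arbitrary $k$-code is nonzero in $\nilcox_k$. You correctly identify this as the crux, but neither of your two routes closes it. In the row-induction route, the appeal to Corollary~\ref{cor:maxRightSet} is circular: that corollary (like Corollary~\ref{cor:containment}) is a statement about affine permutations, i.e.\ about products already known to be nonzero, so it cannot be invoked for $w\,d_{A_1}$ before nonvanishing of that product is established. Moreover, Lemma~\ref{lem:2rowEmptyCol} only says that a \emph{nonzero} two-row product has an empty column; it does not say that vanishing of an $n$-row product is witnessed by a two-row configuration between adjacent rows, and that converse-type claim is exactly what your inductive step needs.

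The box-induction route is much closer to the paper's actual proof (which is indeed an induction on boxes using the insertion algorithm), but your removal/re-insertion pairing is wrong. Right multiplication by $a_p$ does not simply add a box at a corner: the insertion of residue $p$ climbs the column whose \emph{bottom} box carries residue $p$, braiding and bumping boxes out of the neighboring column as it goes, so on codes it acts by $(\alpha_{p-1},\alpha_p)\mapsto(\alpha_p,\alpha_{p-1}+1)$ (in the convention where the first-row box of column $p$ has residue $p$; the product is zero when $\alpha_{p-1}<\alpha_p$). Consequently (i) the residue to insert is the one at the bottom of the corner's column, not the corner's own residue, and (ii) even with the correct residue, corner removal is inverted by insertion only when $\alpha_c=\alpha_{c-1}+1$, and such a corner need not exist. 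Concretely, take $k=1$ and $\alpha=(0,2)=\sigma(a_0a_1)$: the unique outer corner removal gives $\alpha'=(0,1)=\sigma(a_1)$, the corner's residue is $0$, the product $a_1a_0$ is indeed nonzero (your use of Corollary~\ref{cor:descents} is fine), but $\sigma(a_1a_0)=(2,0)\neq\alpha$, and no single right multiplication applied to a one-box permutation yields $(0,2)$. The paper avoids this by removing a box not at a corner but at a column $j$ with $\alpha_{j-1}=0<\alpha_j$: reverse insertion there deletes one box of column $j$ and shifts the remaining boxes of that column into column $j-1$ by commutation two-row moves, an operation that always exists and whose inverse is exactly insertion of residue $j$. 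Replacing your corner removal by this descent-based removal repairs the argument and essentially reproduces the paper's proof.
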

\begin{proof}
The map $\sigma$ takes permutations of length $n$ to $k$-codes with $n$ boxes, so we may consider $\sigma$ as a graded map on finite sets.  Additionally, we may also recover $x$ by taking the reading word of $\sigma(x)$.  By Corollary~\ref{cor:containment}, for any $x\neq y$, we have $\sigma(x)\neq \sigma(y)$, so $\sigma$ is one-to-one.  Then we only need to show that every $k$-code $T$ is a maximal decomposition of some affine permutation; equivalently, that element obtained by the reading word of $T$ is non-zero in $\nilcox_k$.  

For this, we induct on the number of boxes $n$ in  $T$.  At $n=1$, the single box corresponds to a simple transposition, and the statement holds.  Suppose that $S$ is the tableau of shape $\alpha$ with $n+1$ boxes and $\alpha_{j-1}=0, \alpha_{j}\neq 0$.  Then $j$ is a descent of $S$.  Removing the box $j$, we may apply a sequence of commutation two-row moves to remove one box from column $j$ and shift it to position $j-1$.  Since $S$ was a $k$-code, the resulting $S'$ is also a $k$-code, but on $n$ boxes.  As such, it is equal to $\sigma(x)$ for some $x$.  This $x$ has $j\not \in D_R(x)$, so $xu_j\neq 0$.  Reinserting $j$ into $S'$ yields $S$, so we see that $\sigma(xu_j)=S$.  This completes the proof.
\end{proof}

\begin{corollary}
Consider an affine permutation $x$ with $k$-code filling $T$.  Then $x$ is $i$-dominant if and only if $T$ has a flattening which is a $k$-bounded partition with residue $i$ in the lower left box.
\end{corollary}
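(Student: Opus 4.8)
The plan is to reduce the corollary to a clean numerical translation between the right descents of $x$ and the column heights of its $k$-code, and then to read off the geometry of ``flattening to a partition'' directly from those heights. Throughout write $\alpha=(\alpha_0,\dots,\alpha_k)$ for the $k$-code of $x$, so that $\alpha_c$ is the height of column $c$, and recall from Theorem~\ref{thm:canoncialDecomposition} that the canonical decomposition $x=d_{\vec A}$ satisfies $A_{j+1}\subset A_j-1$; in the filling $\sigma(x)$ this means every column is bottom-justified (the boxes of column $c$ occupy exactly rows $1,\dots,\alpha_c$). Recall also that $x$ is $i$-dominant precisely when $D_R(x)\subseteq\{i\}$, and that a residue $r$ in row $1$ sits in column $r+1$, so the box in row $1$ of column $c$ carries residue $c-1$.

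The crux is the following restatement of Corollary~\ref{cor:descents}: for every residue $r$, one has $r\in D_R(x)$ if and only if $\alpha_r<\alpha_{r+1}$ (indices mod $k+1$), i.e.\ if and only if the code has a descent at $r+1$. I would prove both implications from bottom-justification. If $\alpha_r<\alpha_{r+1}$, then column $r+1$ has a box in row $j:=\alpha_r+1$ while column $r$ does not; the residue of that box is $s:=r-\alpha_r$, and the absence of a box in column $r$ at row $j$ says $s-1\notin A_j$, so $s$ is the minimal element of a connected component of $A_j$, hence a right descent of $d_{A_j}$ lying in column $r+1$. Since $\alpha_{r+1}\geq 1$ forces $r\in A_1$, Corollary~\ref{cor:descents} gives $r\in D_R(x)$. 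Conversely, if $r\in D_R(x)$ then Corollary~\ref{cor:descents} supplies a row $j$ whose element $d_{A_j}$ has a right descent $s$ sitting in column $r+1$; then $s\in A_j$, $s-1\notin A_j$, and $s+j=r+1$ translate (again by bottom-justification) into $\alpha_{r+1}\geq j>\alpha_r$, so $\alpha_r<\alpha_{r+1}$.

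Granting this translation, the corollary becomes a statement purely about the cyclic height vector $\alpha$. For the forward direction, if $x$ is $i$-dominant then $D_R(x)\subseteq\{i\}$, so $\alpha$ has at most one descent, necessarily at $i+1$; reading the heights cyclically from $i+1$ they weakly decrease all the way around, so $\alpha_{i+1}\geq\alpha_{i+2}\geq\cdots\geq\alpha_i$ and the minimum is attained at column $i$. Because a $k$-code has a zero column the minimum is $0$, whence $\alpha_i=0$; cutting out this empty column and reading $\alpha_{i+1}\geq\cdots\geq\alpha_{i+k}$ yields a genuine Ferrer's diagram with $k$ columns, i.e.\ a $k$-bounded partition, whose lower-left box is the row-$1$ box of column $i+1$ and therefore carries residue $i$. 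Conversely, if some flattening is a $k$-bounded partition, then after cutting an empty column $c_0$ the surviving heights weakly decrease, so $\alpha$ can have a descent only at $c_0+1$; by the translation $D_R(x)\subseteq\{c_0\}$, so $x$ is $c_0$-dominant, and the lower-left residue is $c_0$, which equals the prescribed $i$. (The identity $x=e$ is the degenerate case: empty code, empty partition, $i$-dominant for every $i$.)

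The main obstacle I anticipate is the crux equivalence of the second paragraph: Corollary~\ref{cor:descents} phrases a right descent of $x$ as ``a row-$1$ residue whose column also contains a right descent of some $d_{A_j}$,'' and the work is to see that this column-coincidence is exactly the height jump $\alpha_r<\alpha_{r+1}$. Making this airtight requires pinning down, via $A_{j+1}\subset A_j-1$, that the single ``step'' in the column profile at $r$ is the top box of a bottom-justified column and simultaneously the component-minimum of the relevant $A_j$; the remaining bookkeeping (that the forced zero column sits at index $i$, so the unique partition-flattening is the cut just before the ascent and pins the lower-left residue to $i$) is then routine.
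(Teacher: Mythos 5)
Your proof is correct and takes essentially the same route as the paper: the paper's entire proof is that the corollary ``follows immediately from Corollary~\ref{cor:descents},'' and your argument simply supplies the details of that deduction---translating Corollary~\ref{cor:descents} into the statement that right descents of $x$ are exactly the cyclic ascents of the height vector $\alpha$ (using bottom-justification from $A_{j+1}\subset A_j-1$), and then reading off $i$-dominance from cyclic monotonicity together with the forced zero column. One minor remark: you place the row-$1$ box of residue $r$ in column $r+1$ (the literal reading of the paper's ``residue $i-j$'' convention with rows indexed from $1$), whereas the paper's figures and Proposition~\ref{prop:kconjagrees} put it in column $r$; this uniform shift only relocates the indices in your crux equivalence and cancels in the final statement, so your argument is internally consistent and proves exactly the claimed corollary.
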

\begin{proof}
This follows immediately from Corollary~\ref{cor:descents}.
\end{proof}

Note that this gives an alternate proof of the bijection between $k$-bounded partitions and $0$-dominant (or Grasssmannian) elements.  

This allows us to prove a theorem on the nil-Coxeter realization of the $k$-bounded symmetric functions.

We complete this subsection with a simple statement relating the $k$-codes to symmetric functions in non-commuting variables.
\begin{theorem}
\label{thm:maxhCoefficient}
Suppose that $\sh(\sigma(x))=\lambda$.  Then:
\[
[x]h_{\lambda} = 1, \text{ and } [x]s_{\lambda}^{(k)} = 1.
\]
\end{theorem}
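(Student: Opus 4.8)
The plan is to prove the two coefficient statements separately, but with the first feeding into the second. For the coefficient in $h_\lambda$, recall that $h_\lambda = \prod_i h_{\lambda_i}$, where each $h_{\lambda_i} = \sum_{|A|=\lambda_i} d_A$ is a sum of cyclically decreasing elements of size $\lambda_i$. Expanding the product $h_{\lambda_1} h_{\lambda_2}\cdots$, every monomial is a cyclically decreasing product $d_{\vec B} = d_{B_1} d_{B_2}\cdots$ with $\sh(\vec B) = \lambda$ exactly (before any reduction), so the coefficient $[x]h_\lambda$ counts the number of ways to write $x$ as such a product with each $|B_i| = \lambda_i$. Since $\sh(\sigma(x)) = \lambda$ by hypothesis, the canonical decreasing decomposition $x = d_{\vec A}$ from Theorem~\ref{thm:canoncialDecomposition} is one such product. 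The key point is \emph{uniqueness}: I would argue that the maximality of $\lambda$ forces this to be the only cyclically decreasing product of shape $\lambda$ equal to $x$. Indeed, any cyclically decreasing decomposition of $x$ whose shape equals the lexicographically maximal shape $\lambda$ must be the canonical one, since by Corollary~\ref{cor:maxRightSet} the rightmost factor $B_1$ must equal the unique maximal set $A_1$ (as $|B_1| = \lambda_1$ is already maximal and $B_1 \subset A_1$), and then one induces on the remaining factors after cancelling $d_{A_1}$ on the right. This gives $[x]h_\lambda = 1$.

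For the $k$-Schur coefficient, the natural approach is to use the triangularity of the Pieri rule. By the Proposition following Corollary~\ref{cor:shiftDominance}, the $k$-Schur function $s_\lambda^{(k)}$ appears with multiplicity one in $h_\lambda$, and more generally the expansion $h_\lambda = \sum_\nu K_{\lambda\nu} s_\nu^{(k)}$ is unitriangular with respect to the dominance order, with $s_\lambda^{(k)}$ the leading term. Inverting this triangular relation gives $s_\lambda^{(k)} = h_\lambda - \sum_{\nu \succ \lambda} c_\nu s_\nu^{(k)}$ for suitable coefficients $c_\nu$, where the sum runs over $\nu$ strictly dominating $\lambda$. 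I would then show that $[x]s_\nu^{(k)} = 0$ for every $\nu \succ \lambda$ with $\nu \neq \lambda$: since every summand appearing in $s_\nu^{(k)}$ has a cyclically decreasing decomposition of shape $\nu$ (or a shape dominated by $\nu$, arising through the Pieri construction), and $x$ has maximal shape exactly $\lambda$, the permutation $x$ cannot appear in any $s_\nu^{(k)}$ with $\nu$ strictly dominating $\lambda$. Combined with $[x]h_\lambda = 1$, this yields $[x]s_\lambda^{(k)} = 1$.

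The main obstacle I anticipate is the second claim, specifically controlling which permutations can appear in $s_\nu^{(k)}$ for $\nu \succ \lambda$. The clean statement ``$x$ has maximal shape $\lambda$, so it cannot appear in $s_\nu^{(k)}$'' requires knowing that every monomial $y$ occurring in $s_\nu^{(k)}$ satisfies $\sh(\sigma(y)) \preceq \nu$, or more precisely that $[y]h_\mu \neq 0$ forces $\mu \preceq \sh(\sigma(y))$ in the appropriate sense. This is where I would need to be careful: the shape of a monomial in the \emph{expanded} product $h_\nu$ need not literally be the canonical shape of $y$, because two-row moves can increase the lexicographic shape during reduction. The correct formulation is that $\sh(\sigma(y))$ is the lexicographically maximal shape among all cyclically decreasing decompositions of $y$, so any product-shape $\nu$ with $[y]h_\nu \neq 0$ satisfies $\nu \preceq_{\mathrm{lex}} \sh(\sigma(y))$. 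For $y = x$ this gives $\nu \preceq_{\mathrm{lex}} \lambda$, which together with the dominance constraint $\nu \succeq \lambda$ from Pieri-triangularity pins down $\nu = \lambda$. Making the interaction between the lexicographic order on shapes and the dominance order rigorous — and confirming that the Pieri expansion genuinely only produces shapes weakly below $\lambda$ in the relevant order — is the delicate step; the first claim $[x]h_\lambda = 1$ is comparatively routine once uniqueness of the maximal-shape decomposition is established.
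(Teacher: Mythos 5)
Your proposal is correct and takes essentially the same route as the paper: the paper also gets $[x]h_{\lambda}=1$ from uniqueness of the maximal cyclic decomposition, writes $h_{\lambda}=s_{\lambda}^{(k)}+\sum_{\mu\succ\lambda}s_{\mu}^{(k)}$, and rules out $[x]s_{\mu}^{(k)}\neq 0$ for $\mu\succ\lambda$ by observing that positivity would then force $[x]h_{\mu}\neq 0$, contradicting maximality of the decomposition of $x$. The ``delicate step'' you flag at the end---that strict dominance implies strictly larger lexicographic order, so a decomposition of shape $\mu$ would beat the canonical one---is precisely the point the paper leaves implicit in the phrase ``contradicting the maximality,'' so your treatment is, if anything, more careful than the original.
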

\begin{proof}
In both $h_{\lambda}$ and $[x]s_{\lambda}^{(k)}$, all coefficients are integers $\geq 0$.  We have $[x]h_{\lambda}=1$ by the uniqueness of the cyclic decomposition of $x$.  Furthermore, $h_\lambda = s_{\lambda}^{(k)} + \sum_\mu s_\mu^{(k)}$, where each $\mu$ dominates $\lambda$.  If we had some $\mu$ with $[x]s_{\mu}^{(k)}=1$, we could then have $[x]h_{\mu}=1$, contradicting the maximality of the decomposition of $x$.
\end{proof}

\subsection{Relating the Various Cyclic Decompositions of an Affine Permutation.}  
\label{ssec:relations}

The constructions of this section may be modified to provide four different $k$-codes associated to any affine permutation $x$.  These are obtained by finding maximal cyclically increasing and decreasing decompositions for $x$ from the right and from the left.  The decomposition from the left finds $x=d_{A_n}d_{A_{n-1}}\cdots d_{A_1}$ maximizing $(|A_n|, |A_{n-1}|, \ldots, |A_1|)$ lexicographically.  This may be found by modifying the algorithm for generating a $k$-code to consider the left descents of $x$ instead of the right descents.

\begin{definition}
Let $x$ be an affine permutation.  Set $\RD(x)$ to be the $k$-code corresponding to the maximal right decomposition of $x$ into a product of cyclically decreasing elements.  Likewise, set $\RI(x)$ to be the $k$-code from the right increasing decomposition, and $\LD(x)$ and $\LI(x)$ be the $k$-codes from the left decreasing and increasing decompositions, respectively.
\end{definition}

\begin{example}
\label{ex:kcastlecomputation}
Let $k=3$, and $x=a_{2, 1, 0, 3, 0, 1, 2, 1, 0, 3, 1, 2, 0, 1, 0}$.
Then $x$ has the following maximal cyclic decompositions:
\begin{itemize}
\item Decreasing Right:  \[x=a_2a_3a_0a_1a_{3,2}a_{0,3,2}a_{1,0,3}a_{2,1,0},\] so   $\RD(x)=(3,8,4,0)$.
\item Increasing Right:  \[x=a_2a_1a_0a_3a_2a_1a_0a_3a_{2,3}a_{1,2}a_{3,0,1},\] so  $\RI(x)=(11,3,0,1)$.
\item Decreasing Left:  \[x=a_{2,1,0}a_{3,2,1}a_{0,3,2}a_{3,1}a_2a_3a_0a_1,\] so $\LD(x)=(4,3,8,0)$.
\item Increasing Left:  \[x=a_{2,3,1}a_{1,3}a_{0,2}a_3a_2a_1a_0a_3a_2a_1a_0,\] so   $\LI(x)=(3,0,11,1)$.
\end{itemize}
\end{example}

An alternative way to produce $\LD(x)$ from $\RD(x)$ is to use the reverse two-row moves to `up-justify' $\RD(x)$.  The resulting object's reading word gives the left decomposition of $x$ into cyclically decreasing elements.

We can establish a more direct relationship between $\LD(x)$ and $\RD(x)$.  
\begin{proposition}
\label{prop:colPermutation}
$\RD(x)$ is a permutation of $\LD(x)$, and $\RI(x)$ is a permutation of $\LI(x)$.
\end{proposition}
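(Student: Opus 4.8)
My plan is to reduce the statement to an equality of partitions, and then prove that equality using a symmetry of $h_\lambda$ rather than trying to track the two-row moves directly.

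First I would record a structural lemma about any maximal decreasing $k$-code. Because Proposition~\ref{prop:maxCyclic} gives $A_{i+1}\subset A_i-1$, the columns of the diagram are solid (a box in one row forces a box in the row below it in the same column), and the boxes of a single row occupy distinct columns. Hence for every $j$ the number of columns of height at least $j$ equals the number of boxes in row $j$, namely $|A_j|$. Writing $\lambda=\sh(\vec A)=(|A_1|,|A_2|,\dots)$, which is already weakly decreasing, this says the conjugate of the multiset of column heights is $\lambda$; equivalently, the multiset of column heights of $\RD(x)$ is exactly $\lambda'$, and likewise for $\LD(x)$ with its own row shape. Since a $k$-code is recovered from its vector of column heights, proving that $\RD(x)$ is a permutation of $\LD(x)$ is equivalent to showing $\sh(\RD(x))=\sh(\LD(x))$ as partitions.

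Next I would prove this equality of shapes. The point is that $[x]h_\lambda\neq 0$ precisely when $x$ admits a cyclically decreasing decomposition with multiset of factor sizes $\{\lambda_i\}$, a condition that refers only to $x$ and $\lambda$ and has no preferred side. Both $\RD(x)$ and $\LD(x)$ furnish such decompositions, so $\sh(\RD(x))$ and $\sh(\LD(x))$ lie in the support of $\lambda\mapsto[x]h_\lambda$. By Theorem~\ref{thm:maxhCoefficient} and the triangularity of $h_\lambda$ against the $k$-Schur basis used in its proof, $\sh(\RD(x))$ is the dominance-maximal $\lambda$ in that support. To obtain the same conclusion for the left side I would introduce the length-preserving anti-automorphism $\theta$ of $\nilcox_k$ fixing the generators, $\theta(a_{i_1}\cdots a_{i_\ell})=a_{-i_\ell}\cdots a_{-i_1}$. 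It is an involution, and one checks $\theta(d_A)=d_{-A}$, so $\theta$ permutes the summands of each $h_i$, fixes each $h_i$ and hence each $h_\lambda$, giving $[x]h_\lambda=[\theta(x)]h_\lambda$ for all $\lambda$. Moreover $\theta$ sends the canonical right decomposition of $\theta(x)$ to the canonical left decomposition of $x$ (it reverses the order of factors and negates the sets, preserving sizes), so $\sh(\LD(x))=\sh(\RD(\theta(x)))$. Applying Theorem~\ref{thm:maxhCoefficient} to $\theta(x)$ and using $[\theta(x)]h_\lambda=[x]h_\lambda$ shows this shape is again the dominance-maximal element of the support of $\lambda\mapsto[x]h_\lambda$. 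Therefore $\sh(\LD(x))=\sh(\RD(x))$, and the column-height multisets agree by the first paragraph.

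For the increasing statement I would run the identical argument with $e_i$, $u_A$ and the increasing $k$-codes in place of $h_i$, $d_A$ and the decreasing ones; here $\theta(u_A)=u_{-A}$, so $\theta$ fixes each $e_i$, and the conjugate-of-shape lemma applies to the column-castle tableaux of $\RI(x)$ and $\LI(x)$. The main obstacle I expect is precisely the shape-equality step: one must be certain that the left decomposition realizes the dominance-maximal element of the support of $\lambda\mapsto[x]h_\lambda$, which is why I would route it through the symmetry $\theta$ and Theorem~\ref{thm:maxhCoefficient} rather than attempting to preserve column heights move by move. Indeed the individual two-row moves, and the chute move in particular, relocate a box between non-adjacent columns and do \emph{not} preserve the column-height multiset one step at a time, so only a global invariant argument seems to close the proof cleanly.
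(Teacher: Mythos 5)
Your overall architecture is sound, and much of it runs parallel to the paper's own proof: the paper likewise reduces the statement to equality of row shapes and gets that equality from Theorem~\ref{thm:maxhCoefficient} plus commutativity of the $h_i$'s (it reverses the commuting product $h_{\lambda_n}\cdots h_{\lambda_1}$ to produce a left decreasing decomposition of the same shape, argues that this decomposition must be left-maximal, and then uses the shifted-containment property to turn equal shapes into equal column multisets). Your anti-automorphism $\theta(a_{i_1}\cdots a_{i_\ell})=a_{-i_\ell}\cdots a_{-i_1}$ is a legitimate substitute for that reversal step: it is well defined (order reversal composed with the Dynkin reflection $i\mapsto -i$ preserves the relations; note it does not fix the generators, despite your wording), it satisfies $\theta(d_A)=d_{-A}$, hence fixes each $h_i$, and it carries the right-maximal decomposition of $\theta(x)$ to the left-maximal decomposition of $x$. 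Your first paragraph (solid columns, column multiset equals conjugate of row shape) is also correct, up to the remark that for $\LD(x)$ the containment is mirrored, so columns are solid from the top rather than the bottom; this is harmless.

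The genuine gap is the intrinsic characterization on which everything hangs: the claim that $\sh(\RD(x))$ is the \emph{dominance}-maximal element of $\{\mu : [x]h_\mu\neq 0\}$, justified ``by Theorem~\ref{thm:maxhCoefficient} and the triangularity of $h_\lambda$ against the $k$-Schur basis.'' Those two facts do not deliver this. From $[x]h_\mu\neq 0$ and $h_\mu=s_\mu^{(k)}+\sum_{\nu\succ\mu}K_{\nu\mu}s_\nu^{(k)}$ you get some $\nu\succeq\mu$ with $[x]s_\nu^{(k)}\neq 0$; and from $[x]h_{\lambda^*}=1=[x]s_{\lambda^*}^{(k)}$ (where $\lambda^*=\sh(\RD(x))$) you can only conclude $[x]s_\nu^{(k)}=0$ for those $\nu$ that strictly dominate $\lambda^*$ \emph{and} occur with positive coefficient in $h_{\lambda^*}$. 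Nothing in these inputs excludes a $\nu$ that is dominance-incomparable to $\lambda^*$ (lexicographically smaller, but with some larger partial sum, as with $(3,3)$ versus $(4,1,1)$), so the step as written does not close; the dominance statement is in fact true, but it is essentially Lam's Theorem 13 and needs a real argument, e.g.\ that every two-row move weakly increases the partial sums of the shape vector. The repair is easy: replace ``dominance-maximal'' by ``lexicographically maximal.'' Since the $h_i$'s commute and all coefficients are nonnegative, $[x]h_\mu\neq 0$ exactly when $x$ admits a decreasing decomposition whose factor sizes, sorted decreasingly from the right, are $\mu$; lexicographic maximality of $\sh(\RD(x))$ in this support is then immediate from the definition of the maximal decomposition, the same holds for $\theta(x)$, and your $\theta$-symmetry argument goes through verbatim with lex in place of dominance.
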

\begin{proof}
Suppose $x=d_{\vec{A}}$ with $\sh(\vec{A})=\lambda$.  Recall that $h_\lambda=h_{\lambda_n}\cdots h_{\lambda_1}$.  By Theorem~\ref{thm:maxhCoefficient}, we have $[x]h_{\lambda} = 1$, corresponding to the unique maximal cyclically decreasing decomposition of $x$.  But because the $h_i$'s commute, we have $[x]h_{\lambda_1}\cdots h_{\lambda_n} = 1$.  Since $x$ appears in $h_{\lambda_n}\cdots h_{\lambda_1}$, there exists a cyclically decreasing decomposition for $x=d_{\vec{B}}$ of shape $(\lambda_n, \ldots, \lambda_1)$.  This decomposition is maximal as a left cyclically decreasing decomposition, or else commutativity of the $h_i$'s would imply that our original decomposition of $x$ was not maximal.  Then $\sh(\vec{B})$ is the reverse of $\lambda$, and $B_{i-1}\subset B_i-1$ for each $i$, implying that the entries in $\LD(x)$ are the same as the entries in $\RD(x)$, up to some reordering.

The proof for the increasing case is identical.
\end{proof}

\begin{problem}
Describe the permutation relating $\RD(x)$ and $\LD(x)$ for arbitrary $x$.  Is there a straightforward way to calculate the permutation, short of directly computing the left maximal decomposition?
\end{problem}

\begin{lemma}
For any affine permutation $x$, the descent sets of $\RI(x)$ and $\RD(x)$ are equal.  Also, the descent sets of $\LI(x)$ and $\LD(x)$ are equal.
\end{lemma}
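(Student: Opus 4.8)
The plan is to show that all four descent sets are controlled by the descent sets of $x$ itself: the descent sets of $\RD(x)$ and $\RI(x)$ both equal $D_R(x)$, while those of $\LD(x)$ and $\LI(x)$ both equal $D_L(x)$. Granting this, the lemma is immediate, since $\RD(x)$ and $\RI(x)$ both record the single permutation $x$ from the right, and $\LD(x),\LI(x)$ record it from the left.

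First I would handle the decreasing codes. For $\RD(x)=\sigma(x)$ this is essentially Corollary~\ref{cor:descents}: an index $r$ is a descent of the code exactly when $r$ occurs in the bottom row $A_1$ and its column carries a right descent of some $d_{A_i}$, and Corollary~\ref{cor:descents} identifies precisely this set with $D_R(x)$. I would verify that the combinatorial condition $\alpha_{r-1}<\alpha_r$ encodes both requirements at once: the strict inequality forces $\alpha_r\geq 1$, so $r\in A_1$, while the containments $A_{i+1}\subset A_i-1$ from Theorem~\ref{thm:canoncialDecomposition} guarantee that a strict drop between columns $r-1$ and $r$ is equivalent to the presence of a genuine right descent in that column. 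Hence the descent set of $\RD(x)$ equals $D_R(x)$.

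Next I would establish the increasing analogue, that the descent set of $\RI(x)$ also equals $D_R(x)$. As noted at the opening of Section~\ref{sec:kcastles}, every statement about cyclically decreasing products has a mirror version for cyclically increasing products; here the role of the smallest index of a connected block $[i,j]$ is played by the largest, since $u_{[i,j]}=a_ia_{i+1}\cdots a_j$ has $D_R=\{j\}$ rather than $\{i\}$. Running the proof of Corollary~\ref{cor:descents} through this reflection gives the increasing counterpart: $r\in D_R(x)$ iff $r$ sits in the first row of $\RI(x)$ and its column carries a right descent of some $u_{B_i}$, which translates into the reflected descent condition on the increasing code. Combining the two computations yields $\mathrm{des}(\RI(x))=D_R(x)=\mathrm{des}(\RD(x))$.

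Finally, the left statement follows by symmetry. The left decompositions are produced by the same algorithm with $D_L(x)$ replacing $D_R(x)$, equivalently by applying the right-hand results to $x^{-1}$, since inversion exchanges left and right descents and interchanges the two decompositions; the identical argument then gives $\mathrm{des}(\LD(x))=D_L(x)=\mathrm{des}(\LI(x))$. I expect the main obstacle to be the middle step: carefully transporting Corollary~\ref{cor:descents} to cyclically increasing elements, where one must check that reflecting the descent convention, reading drops $\alpha_i>\alpha_{i+1}$ rather than rises $\alpha_{i-1}<\alpha_i$, is exactly what keeps the characterization pinned to $D_R(x)$. The example $\RI(x)=(11,3,0,1)$, whose reflected descent set $\{0,1\}$ matches $D_R(x)=\{0,1\}$ while its naive descent set $\{0,3\}$ does not, shows that this reflection is essential rather than cosmetic.
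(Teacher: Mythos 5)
Your proof is correct, and its core matches the paper's: the paper's (one-line) proof simply asserts that the descent sets of $\RI(x)$ and $\RD(x)$ both equal $D_R(x)$ by Corollary~\ref{cor:descents}, and that an analog of that corollary for left decompositions gives the second statement. You deviate in two worthwhile ways. First, for the left case you use the inversion symmetry $\RD(x^{-1})=\LI(x)$, $\LD(x^{-1})=\RI(x)$ together with $D_R(x^{-1})=D_L(x)$, rather than re-proving a left analog of Corollary~\ref{cor:descents}; this is cleaner, reuses the right-hand results verbatim, and is not circular since that inversion proposition is proved independently of this lemma. Second, and more importantly, your point about the reflected descent convention is not cosmetic bookkeeping but an actual correction to the statement: under the paper's literal definition of a descent of a $k$-code ($\alpha_{i-1}<\alpha_i$), the lemma fails on the paper's own Example~\ref{ex:kcastlecomputation}, where $\RD(x)=(3,8,4,0)$ has descent set $\{0,1\}=D_R(x)$ but $\RI(x)=(11,3,0,1)$ has descent set $\{0,3\}$; only the reflected reading $\alpha_i>\alpha_{i+1}$ of the increasing code recovers $\{0,1\}$. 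The reason is exactly the one you give: in the transposed filling convention for increasing codes, a box of $u_{B_j}$ is a right descent precisely when its neighbor in the increasing direction is absent, which translates to $\alpha_r>\alpha_{r+1}$ rather than $\alpha_{r-1}<\alpha_r$. So the lemma is true in the sense you prove it --- all four codes encode $D_R(x)$ or $D_L(x)$ through the appropriate descent reading --- but the paper's statement needs your reflected convention (or must be interpreted as ``both codes determine the same set $D_R(x)$'') to be literally correct, a subtlety the paper's appeal to Corollary~\ref{cor:descents} glosses over.
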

\begin{proof}
The descent sets of $\RI(x)$ and $\RD(x)$ are equal to $D_R(x)$, by Corollary~\ref{cor:descents}.  One can prove an analog of Corollary~\ref{cor:descents} for the left decompositions, giving the second statement.
\end{proof}

Recall that the inverse of an affine permutation $x$ is obtained by reversing any reduced word for $x$.

\begin{proposition}
Let $x$ be an affine permutation.  Then $\RD(x^{-1})=\LI(x)$ and $\LD(x^{-1})=\RI(x)$.  
\end{proposition}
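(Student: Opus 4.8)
The plan is to exploit the fundamental relationship between inversion and the two symmetries at play: reversing a reduced word (which produces the inverse) and exchanging the roles of cyclically increasing and cyclically decreasing elements. The key observation is that if $d_A = a_{i_1} \cdots a_{i_n}$ is a cyclically decreasing element, then reading its reduced word backwards yields $a_{i_n} \cdots a_{i_1}$, which is precisely $u_A$, the cyclically increasing element on the same set $A$. This is immediate from the definitions: in $d_A$, larger adjacent indices appear to the left, so reversing the word places them to the right, giving exactly the cyclically increasing ordering. Thus $(d_A)^{-1} = u_A$ and $(u_A)^{-1} = d_A$ as elements of the affine symmetric group.

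First I would take the canonical decreasing decomposition $x = d_{A_n} d_{A_{n-1}} \cdots d_{A_1}$ from the left (which is what $\LD(x)$ records, maximizing $(|A_n|, \ldots, |A_1|)$ lexicographically). Inverting gives
\[
x^{-1} = (d_{A_1})^{-1} (d_{A_2})^{-1} \cdots (d_{A_n})^{-1} = u_{A_1} u_{A_2} \cdots u_{A_n},
\]
which is a cyclically \emph{increasing} decomposition of $x^{-1}$ read as a right decomposition $u_{A_1} u_{A_2} \cdots u_{A_n}$, with shape $(|A_1|, |A_2|, \ldots, |A_n|)$. I would then argue that this decomposition is maximal in the lexicographic sense defining $\RI(x^{-1})$. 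The crux is that maximality is preserved under inversion: any cyclically increasing right decomposition of $x^{-1}$ inverts to a cyclically decreasing left decomposition of $x$ with the reversed shape vector, and since inversion is a length-preserving bijection on the monoid, a lexicographically larger increasing decomposition of $x^{-1}$ would invert to a lexicographically larger decreasing left decomposition of $x$, contradicting the maximality of $\LD(x)$. Hence the $k$-codes coincide, giving $\RD(x^{-1}) = \LI(x)$ by the completely symmetric argument and $\LD(x^{-1}) = \RI(x)$ as just sketched.

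The main obstacle will be handling the bookkeeping of the shape vector orientation correctly: the left decomposition maximizes $(|A_n|, \ldots, |A_1|)$ while the right decomposition maximizes $(|A_1|, \ldots, |A_n|)$, so when inverting I must verify that reversing the order of factors interacts correctly with the left-versus-right convention so that a single lexicographic comparison transfers cleanly. I expect this to be routine once the factor-reversal $(d_{\vec{A}})^{-1}$ is written out carefully, but it is the step most prone to an off-by-one or orientation error. I would also need to confirm that $u_{A_1} \cdots u_{A_n}$ satisfies the containment property $A_{i} \subset A_{i+1} - 1$ appropriate for a maximal increasing code, which follows from applying Corollary~\ref{cor:containment} (or its increasing analogue) to $\LD(x)$, whose factors satisfy $A_{i+1} \subset A_i - 1$; the inversion simply relabels which containment direction corresponds to maximality.
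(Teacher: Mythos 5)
Your proof is correct and takes essentially the same route as the paper: inversion reverses reduced words, turning cyclically decreasing factors into cyclically increasing ones and exchanging right-maximality with left-maximality, so the canonical decompositions (and hence the $k$-codes) correspond factor-by-factor. One cosmetic slip in your final paragraph: with your indexing ($A_{i+1}$ sitting to the right of $A_i$ in $x^{-1}=u_{A_1}\cdots u_{A_n}$), the containment characterizing a maximal increasing decomposition is $A_i \subset A_{i+1}+1$, not $A_i \subset A_{i+1}-1$ --- though your main contradiction argument via bijectivity of inversion never actually needs this check.
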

\begin{proof}
The element $x^{-1}$ is obtained by reversing a reduced expression for $x$.  The reversal of a cyclically decreasing element is a cyclically increasing element, and vice versa.  Thus, reversing the cyclic decomposition immediately converts the maximal decreasing right decomposition for $x$ into the maximal increasing left decomposition for $x$ (which coincides with the maximal right increasing decomposition of $x^{-1}$).
\end{proof}


\subsection{Affine Codes and $k$-Codes}  
\label{ssec:affLehmer}

The various $k$-codes associated to an affine permutation relate directly to the affine code derived from considering $x$ as a permutation of the integers.  There are various ways to construct the code of a permutation in the finite case; we directly generalize four methods and place them in correspondence with the $k$-codes.  Two of these methods correspond to the affine code of the permutation, and two correspond to the inversion vector.  We unify the two concepts by referring to both as simply the code of the permutation.

\begin{definition}
An \emph{affine code} is given by a vector with $k+1$ entries, $L=\{L_1, \ldots, L_{k+1}\}$.  Four different affine codes are described below, by providing an algorithm for finding the $i$th entry of the code for an affine permutation $x$.
\begin{itemize}
\item CRD: The \emph{right decreasing code} is given by the number of $j<i$ with $x(j)>x(i)$.
\item CRI: The \emph{right increasing code} is given by the number of $j>(i+1)$ with $x(j)<x(i+1)$.
\item CLD: The \emph{left decreasing code} is given by the number of $j<x^{-1}(i+1)$ with $x(j)>i+1$.
\item CLI: The \emph{left increasing code} is given by the number of $j>x^{-1}(i)$ with $x(j)<i$.
\end{itemize}
\end{definition}

\begin{example}
Consider the affine permutation $x$ with $k=3$ and window notation $[1, -6, 0, 15]$.  
Then the CRD is $(3,8,4,0)$: for example, $11$, $7$ and $3$ all appear to the left of $1=x(1)$, so that the first entry of $\LCRD(x)$ is $3$.  This matches the $k$-code for this element, described in Example~\ref{ex:kcastlecomputation}.
\end{example}

\begin{proposition}
The affine codes described above are equal to the respective $k$-codes for an affine permutation.
\end{proposition}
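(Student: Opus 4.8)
The plan is to prove the equivalence of the four affine codes with the corresponding $k$-codes by establishing, in each case, that the $i$th entry of the combinatorially-defined code counts exactly the number of boxes in the $i$th column of the corresponding $k$-code diagram. Since all four cases are structurally parallel (and the increasing/left variants follow from the decreasing/right case by the symmetries already established in Section~\ref{ssec:relations}, namely Proposition~\ref{prop:colPermutation} and the inverse relations), the main work is to settle a single case carefully; I would focus on $\LCRD$ versus $\RD(x)$.

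First I would unwind what the $i$th column of $\RD(x)$ counts. Recall that $\RD(x)=d_{\vec{A}}$ is the canonical decreasing decomposition, with $A_{j+1}\subset A_j-1$, so the $k$-code filling has residue $i-(\text{row index})$ in the box of column $i$. The number of boxes in column $i$ therefore records how many rows $A_j$ contain the appropriate shifted residue, i.e.\ how deep the ``rampart'' rises over position $i$. The combinatorial claim is that this equals the number of inversions $\#\{j<i : x(j)>x(i)\}$, the right decreasing code $\LCRD_i$. The cleanest route is induction on $\len(x)$, tracking how both quantities change under right multiplication by a single generator $a_p$ (equivalently, the insertion $\RD(x)\leftarrow p$ described in Section~\ref{ssec:insertion}). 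On the permutation side, right-multiplying by $s_p$ swaps the values in positions $p$ and $p+1$ and so alters the inversion counts in a completely local and predictable way (it changes $\LCRD_p$ and $\LCRD_{p+1}$ by $\pm 1$ and leaves the others fixed, modulo the cyclic/affine bookkeeping). On the $k$-code side, I would read off from the explicit insertion moves (Inclusion, Bump, Braid) exactly which column gains a box. The heart of the argument is matching these two local pictures: showing the box added by $\RD(x)\leftarrow p$ lands in precisely the column whose inversion count increases.

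The \emph{main obstacle} I anticipate is the affine/cyclic bookkeeping of inversions. For a classical permutation the Lehmer code counts finitely many inversions in an unambiguous way, but for an affine permutation the condition ``$j<i$ with $x(j)>x(i)$'' must be interpreted within one window while respecting the periodicity $x(i+k+1)=x(i)+k+1$; one must check that the count is well-defined, that it agrees with the stated normalization (in particular that some column is $0$, matching the ``gate'' in the $k$-code), and that the local $\pm1$ changes under right multiplication genuinely correspond to crossing a single wall in the alcove picture. Reconciling the residue-labelling of columns (residue $i-j$ in row $j$) with the position-labelling of the window is where signs and index shifts are most likely to go wrong, so I would verify the base case and one generator step against the worked Example with window $[1,-6,0,15]$, where $\LCRD=(3,8,4,0)$ already agrees with Example~\ref{ex:kcastlecomputation}.

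Once the $\LCRD$ case is fully reconciled with $\RD(x)$, I would obtain $\LCRI=\RI(x)$ by the same induction run with cyclically increasing elements (the insertion moves and inversion counts both dualize), and then deduce the two left codes using the inverse and conjugation relations: $\RD(x^{-1})=\LI(x)$ and $\LD(x^{-1})=\RI(x)$ from the preceding Proposition convert left statements into already-proven right statements, since passing to $x^{-1}$ exchanges the position-based and value-based inversion counts that define $\LCLD$ and $\LCLI$. This reduces all four equalities to the single carefully-checked base case, so the overall proof is short provided the indexing in that base case is pinned down correctly.
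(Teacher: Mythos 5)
Your route is genuinely different from the paper's, and it is viable. The paper also reduces everything to the single equality $\LCRD(x)=\RD(x)$, but its induction is coarser: it peels off one whole maximal cyclically decreasing factor at a time, writing $x=yd_A$ with $A$ maximal and comparing $\LCRD(yd_A)$ with $\LCRD(y)$, and it dismisses the remaining three cases as ``similar logic.'' You instead induct on $\len(x)$ one generator at a time, using the insertion algorithm of Subsection~\ref{ssec:insertion} as the $k$-code-side mechanism, and you reduce the two left codes to the right ones via inverses ($\LCLI(x)=\LCRD(x^{-1})$ together with $\RD(x^{-1})=\LI(x)$, and likewise $\LCLD(x)=\LCRI(x^{-1})$ with $\RI(x^{-1})=\LD(x)$). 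That reduction is cleaner and more explicit than what the paper offers, and the single-generator induction has the advantage of reusing machinery the paper has already built.

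However, the local rule you propose to match is wrong on both sides, and as stated your ``heart of the argument'' would fail. Under right multiplication by $s_p$ (length increasing), the code does \emph{not} change by $\pm1$ in two spots: a direct check gives $L_p(xs_p)=L_{p+1}(x)$ and $L_{p+1}(xs_p)=L_p(x)+1$, with all other entries fixed --- a transposition of two adjacent entries followed by an increment, which can move individual entries by arbitrarily large amounts. Correspondingly, the insertion $\RD(x)\leftarrow p$ does \emph{not} simply add one box to the column whose count increases: the braid moves walk down through the rows where columns $p$ and $p+1$ are both occupied, the bump cascade then transfers every remaining box of column $p$ into column $p+1$, and the terminal inclusion adds one more box there, so the net column effect is $(\alpha_p,\alpha_{p+1})\mapsto(\alpha_{p+1},\alpha_p+1)$. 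These two corrected local pictures coincide exactly, so your induction does close once the rule is fixed --- but with the rule as you stated it, the matching step cannot be carried out. (You are in good company: the paper's own inductive step, asserting that right multiplication by $d_A$ ``increases by one or stabilizes each entry according to whether $i\in A$,'' is also not literally correct. Since every existing row of the filling moves up one row, the old entries shift one column cyclically before the indicator of $A$ is added; already for $x=s_1\cdot d_{\{0,2\}}$ at $k=2$ one has $\LCRD(s_1)=(0,1,0)$ but $\LCRD(x)=(1,0,2)$, in which an entry decreases. So getting this bookkeeping exactly right is the real content of the proposition under either approach.)
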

\begin{proof}
We show that $\LCRD(x)=\RD(x)$; the other three equalities follow similar logic.  Let $L:=\LCRD(x)=(L_1, \ldots, L_{k+1})$, and let $K:=\RD(x)=(K_1, \ldots, K_{k+1})$.  We show that $L=K$.

When $x=d_A$ for some $A$, the statement is clear:  For any $i\in I$, if $i\not \in A$ then there are no larger elements to the left of position $i$, so $L_i=0$, as will $K_i$.  If $i\in A$, the transposition at positions $i, i+1$ moves exactly one large element to the left of position $i$, so that $L_i=1$.  Likewise, $K_1=1$ in this case, so the base case holds.

For the induction step, let $x=yd_A$ be a reduced product, with $A$ maximal.  By induction, we have $\LCRD(y)=\RD(y)$.  By inspection, applying $d_A$ to $y$ either increases by one or stabilizes each entry $L_i$ in $\LCRD(y)$, according to whether $i\in A$.  Then the proposition holds.
\end{proof}

\subsection{Grassmannian and $i$-Dominant Elements.}  
\label{ssec:grassmannian}

A special case, important in the study of $k$-Schur functions, occurs when an affine permutation $x$ has $D_R(x)\subset \{i\}$ for some $i\in I$.  When $i=0$, $x$ is called a \emph{Grassmannian element}, and otherwise it is known as an $i$-dominant element.  By Corollary~\ref{cor:descents} these are given by the $k$-codes with (at most) a single decent at position $i$; a flattening of such a $k$-code is a $k$-bounded partition.  

The following result is known within the community (in particular to the authors of~\cite{bbtz2011}), but the author has been unable to find a reference.  We state the result here as a corollary of the $k$-code construction.
\begin{corollary}
\label{cor:cycDecrWord}
Let $w_\lambda$ be the $0$-dominant element in the expansion of $s_\lambda^{(k)}$.  Then $w_\lambda$ has a unique reduced decomposition as a maximal cyclically decreasing product, where the $i$-th cyclically decreasing element has length $\lambda_i$.  This word is obtained by writing the diagram of the $k$-bounded partition of $\lambda$ and marking the $k+1$-residues in each box, and then reading the rows of the resulting tableau right-to-left.  In other words, if $\lambda$ has $n$ parts,
\[
w_\lambda = \prod_{i=1}^{n} d_{ [-n+i, -n+i+\lambda_{n-i}-1 ]},
\]
where the subscripts are considered modulo $k+1$.
\end{corollary}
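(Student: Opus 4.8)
The plan is to build the asserted decomposition by hand as a specific $k$-code, to verify via Corollary~\ref{cor:containment} that it is maximal and via the bijection of Theorem~\ref{thm:compTabBijection} that it is the canonical decreasing decomposition of a genuine affine permutation $x_\lambda$, and then to use the coefficient computation of Theorem~\ref{thm:maxhCoefficient} to identify $x_\lambda$ with $w_\lambda$. Concretely, I would draw the diagram of the $k$-bounded partition $\lambda=(\lambda_1,\dots,\lambda_n)$, orient it so that its lower-left box carries residue $0$, and fill each box using the rule that the box in column $c$, row $j$ receives residue $c-j\bmod(k+1)$. Because $\lambda_1\le k<k+1$, the diagram occupies at most $\lambda_1$ of the $k+1$ cylinder columns, so at least one column is empty; hence the filling is a legitimate $k$-code. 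Reading row $j$ from right to left produces the reduced word of a cyclically decreasing element $d_{A_j}$ whose residue set $A_j$ is the cyclic interval $[\,1-j,\ \lambda_j-j\,]$ of size $\lambda_j$, and the full reading word is the product $d_{A_n}\cdots d_{A_1}$.

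First I would check maximality and extract $x_\lambda$. Since the residues of row $j+1$ form the interval of row $j$ with both endpoints decremented and its right endpoint further shortened by $\lambda_j-\lambda_{j+1}\ge 0$, a one-line computation gives $A_{j+1}\subset A_j-1$ for every $j$. By Corollary~\ref{cor:containment} this containment is precisely the condition that makes $d_{A_n}\cdots d_{A_1}$ a maximal cyclically decreasing product; by Theorem~\ref{thm:compTabBijection} its reading word is therefore nonzero and equals the canonical decreasing decomposition of an affine permutation $x_\lambda$ with $\sigma(x_\lambda)$ equal to this $k$-code. In particular $\sh(\sigma(x_\lambda))=\lambda$ and the $j$-th factor has length $\lambda_j$. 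Moreover, the flattened diagram is a $k$-bounded partition with residue $0$ in its lower-left box, so the corollary following Theorem~\ref{thm:compTabBijection} yields $D_R(x_\lambda)\subset\{0\}$; that is, $x_\lambda$ is $0$-dominant.

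Next I would identify $x_\lambda=w_\lambda$. Since $\sh(\sigma(x_\lambda))=\lambda$, Theorem~\ref{thm:maxhCoefficient} gives $[x_\lambda]s_\lambda^{(k)}=1$, so $x_\lambda$ is a $0$-dominant summand of $s_\lambda^{(k)}$. The Proposition preceding Corollary~\ref{cor:shiftDominance} guarantees that such a summand is unique, whence $x_\lambda=w_\lambda$. Uniqueness of the maximal cyclically decreasing decomposition of $w_\lambda$ is then exactly Theorem~\ref{thm:maxDecomposition}. Finally, reindexing the factors of $d_{A_n}\cdots d_{A_1}$ so that the product runs in the stated order turns $A_j=[\,1-j,\ \lambda_j-j\,]$ into the displayed intervals $[-n+i,\,-n+i+\lambda_{n-i}-1]$, giving the closed formula.

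I expect the main obstacle to be bookkeeping rather than anything conceptual: one must fix the orientation and the row/column indexing so that the lower-left residue really is $0$ and the interval endpoints match the statement, and one must confirm that with this convention the filled $k$-bounded diagram has a single right descent (at $0$) and a genuine empty column. These verifications are elementary, but each hinges on aligning the residue rule ``column $c$, row $j$ has residue $c-j$'' with the drawing, and there is a real danger of an off-by-one in the subscript of $\lambda$ in the final product.
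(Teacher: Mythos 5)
Your proposal is correct and takes essentially the same route as the paper: the paper states this result without a separate proof, as an immediate consequence of the $k$-code machinery (Theorem~\ref{thm:compTabBijection}, Corollary~\ref{cor:containment}, the $i$-dominance corollary following Theorem~\ref{thm:compTabBijection}, Theorem~\ref{thm:maxhCoefficient}, and uniqueness of the $0$-dominant summand), which is precisely the chain of results you assemble explicitly. One caveat: your reindexing in fact yields $\lambda_{n-i+1}$ as the length of the $i$-th factor rather than the displayed $\lambda_{n-i}$, and this mismatch is an off-by-one typo in the paper's own formula (at $i=n$ it would refer to the nonexistent $\lambda_0$, and it disagrees with the paper's example $\lambda=(3,2,2,1,1)$), so the formula you derive is the correct one---exactly the off-by-one danger you flagged.
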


An identical argument allows one to find a reduced word for $w_\lambda$ as a maximal cyclically increasing product.  To find this reading word, consider the bijection between $k$-bounded partitions and $k+1$-cores.  The $k$-bounded partition is obtained by removing all boxes with hook $>k+1$ from the $k+1$-core, and then ``left-justifying'' the resulting skew shape (called the $k$-boundary of the core).  To obtain a cyclically increasing word for $w_\lambda$, one instead ``down-justfies'' the $k$-boundary to obtain a partition whose columns are all $k$-bounded.  Fill the boxes of this partition with $k+1$ residues, and then read the columns top-to-bottom, right-to-left.  

Let $\lambda$ be a $k$-bounded partition.  Then the bijection between $k$-bounded partitions and $k+1$ cores yields a core $\mu$.  The bijection between $k+1$-cores and $k$-column bounded partitions gives us a $k$-column bounded partition $\nu$.  To all of these things, there is a $0$-dominant element $w\in \nilcox_k$.  We can read off the maximal cyclically decreasing product for $w$ from $\lambda$, and the maximal cyclically increasing product for $w$ from $\nu$.

In particular, we can convert very quickly between maximal cyclically increasing and decreasing expressions for $w$.

If we wish to find an $i$-dominant maximal cyclically decreasing (resp. increasing) word, we can simply add $i$ to all the residues in $\lambda$ (resp. $\nu$); this is equivalent to applying the Dynkin diagram automorphism $i$ times to the word for the $0$-dominant element $w$.

Suppose $x$ is an $i$-dominant affine permutation with $k$-code $\alpha=\RD(x)$ which flattens to the $k$-bounded partition $\lambda$.  Then $x$ also has a $k$-column castle $\beta=\RI(x)$, which also has (at most) one descent, and is thus also associated to a $k$-bounded partition $\mu$.  These two partitions are related by an operation called the $k$-conjugate.  There is a bijection $\mathfrak{c}$ from $k$-bounded partitions to $k+1$ cores, which are partitions containing no hooks of length $k+1$.  Denote the core associated to a partition $\nu$ by $\mathfrak{c}(\nu)$, and the conjugate of a partition by $\nu^t$.  Then the $k$-conjugate of $\lambda$ is defined to be $\lambda^{(k)}:=\mathfrak{c}^{-1}((\mathfrak{c}(\lambda))^t)=\mu$.  

We summarize this discussion in the following proposition:
\begin{proposition}
\label{prop:kconjagrees}
Let $x$ be a $0$-dominant affine permutation, associated to $k$-bounded partition $\lambda$ with column heights given by $\lambda'=(\lambda_1', \ldots, \lambda_k')$, some of which may be zero.  Then the $k$-code $\RD(x)=(\lambda_1', \ldots, \lambda_k',0)$.  Furthermore, if $\nu$ is the $k$-conjugate of $\lambda$, with column heights $(\nu_1', \ldots, \nu_k')$, we have $\RI(x)=(\nu_1', \ldots, \nu_k',0)$.
\end{proposition}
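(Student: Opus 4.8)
The plan is to derive both equalities directly from Corollary~\ref{cor:cycDecrWord}, together with the translation between residues and cylinder-columns that is built into the definition of a $k$-code, treating the increasing statement as the formal mirror of the decreasing one.

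First I would establish the decreasing equality $\RD(x)=(\lambda_1',\ldots,\lambda_k',0)$. Since $x$ is $0$-dominant, Corollary~\ref{cor:cycDecrWord} gives the unique maximal cyclically decreasing decomposition $x=d_{\vec A}$ explicitly: the $j$th row $A_j$ (counting from the bottom) is the consecutive interval of residues read off from the residue-marked diagram of the $k$-bounded partition $\lambda$, so $|A_j|=\lambda_j$ and $\sh(\vec A)=\lambda$, while Proposition~\ref{prop:maxCyclic} records the shifted containment $A_{j+1}\subset A_j-1$. The $k$-code $\RD(x)=\sigma(x)$ is by definition the filling whose $j$th row carries the residues of $A_j$, and its window notation records column heights. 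The key computation is to convert residues into columns: in a $k$-code filling the box of residue $r$ sitting in row $j$ occupies column $r+j$ on the cylinder. Carrying this out on the intervals supplied by Corollary~\ref{cor:cycDecrWord}, I expect the left end of each $A_j$ to land in column $0$, so that every row is left-justified and row $j$ spans columns $0,1,\ldots,\lambda_j-1$. Consequently column $i$ has height equal to the number of rows of length exceeding $i$, i.e.\ $\alpha_i=\lambda_{i+1}'$ for $0\le i\le k-1$; since $\lambda$ is $k$-bounded, column $k$ is empty and plays the role of the gate, giving $\alpha_k=0$ and hence $\RD(x)=(\lambda_1',\ldots,\lambda_k',0)$.

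For the increasing equality I would run the formally identical argument with cyclically increasing elements in place of cyclically decreasing ones, using the increasing analogue of Corollary~\ref{cor:cycDecrWord} recorded in the discussion above: the maximal cyclically increasing decomposition of $x=w_\lambda$ is read off from the $k$-column-bounded partition obtained by down-justifying the $k$-boundary of the core $\mathfrak{c}(\lambda)$ and reading its columns top-to-bottom, right-to-left. The one genuinely new step is to identify this down-justified $k$-boundary with the $k$-conjugate. Because conjugating a core transposes its $k$-boundary, the down-justified $k$-boundary of $\mathfrak{c}(\lambda)$ is precisely the diagram whose left-justification is $\nu=\mathfrak{c}^{-1}\big((\mathfrak{c}(\lambda))^t\big)=\lambda^{(k)}$. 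Feeding $\nu$ through the analogous residue-to-position translation for a $k$-column castle then yields $\RI(x)=(\nu_1',\ldots,\nu_k',0)$.

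I expect the main obstacle to be bookkeeping rather than anything conceptual: pinning down the residue conventions so that the empty gate genuinely lands in the final coordinate (position $k$) rather than at some other cyclic rotation, and confirming in the increasing case that the column-castle window notation records exactly the column heights $\nu'$ of the $k$-conjugate, and not the row lengths of $\nu$ nor $\nu$ itself. Both points hinge on correctly tracking the transpose implicit in the definition of the $k$-conjugate, together with the off-by-one between the bottom row of the diagram and the parts of $\lambda$. Once these conventions are fixed, both equalities fall out of the explicit reduced words furnished by Corollary~\ref{cor:cycDecrWord} and its increasing counterpart.
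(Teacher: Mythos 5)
Your proposal is correct and takes essentially the same route as the paper: the proposition there is stated as a summary of the immediately preceding discussion, namely Corollary~\ref{cor:cycDecrWord} (reading the maximal cyclically decreasing word from the residue-filled rows of $\lambda$), its cyclically increasing analogue (reading columns of the down-justified $k$-boundary of $\mathfrak{c}(\lambda)$), and the identification of that down-justified boundary with the transpose of the $k$-conjugate $\nu=\lambda^{(k)}$. The residue-to-column bookkeeping you flag does work out as you expect—each row of $\RD(x)$ is left-justified starting in column $0$, so the gate lands in column $k$—and the analogous check identifies the column-castle rows with $\nu'$.
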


\begin{example}
Consider the $3$-bounded partition $\lambda=(3,2,2,1,1)$.  Below we see $\lambda$, the associated $(3+1)$-core, and $3$-column bounded partition:
\vspace{.1in}

\begin{center}
\begin{tikzpicture}[scale=.5]
\coordinate (base point) at (0,0);
\node (tab0) at (1,1) [rectangle, minimum size=1cm] {};
\foreach \entry / \r / \c in {  0/0/0, 1/0/1, 2/0/2, 
                                3/1/0, 0/1/1, 
                                2/2/0, 3/2/1, 
                                1/3/0, 
                                0/4/0 }
{ \draw (base point) ++(\c,\r) rectangle +(1,1) node [midway] {\entry}; }


\coordinate (base point) at (6,0);
\node (tab0) at (1,1) [rectangle, minimum size=1cm] {};
\foreach \entry / \r / \c in { x/0/0, x/0/1, x/0/2, +/0/3, +/0/4, +/0/5,
                               x/1/0, +/1/1, +/1/2,  
                               x/2/0, +/2/1, +/2/2, 
                               +/3/0, +/4/0}
{ \draw (base point) ++(\c,\r) rectangle +(1,1) node [midway] {\entry}; }

\coordinate (base point) at (15,0);
\node (tab0) at (1,1) [rectangle, minimum size=1cm] {};
\foreach \entry / \r / \c in {  0/0/0, 1/0/1, 2/0/2, 3/0/3, 0/0/4, 1/0/5, 
                                3/1/0, 0/1/1, 1/1/2 }
{ \draw (base point) ++(\c,\r) rectangle +(1,1) node [midway] {\entry}; }
\end{tikzpicture}
\end{center}
\vspace{.1in}

Then the maximal cyclically decreasing decomposition for $w_\lambda$ is $a_{0}a_{1}a_{3,2}a_{3,0}a_{2,1,0}$, obtained by reading the residues in the rows from right to left, top to bottom.  The maximal cyclically increasing decomposition is $a_{1}a_{0}a_{3}a_{1,2}a_{0,1}a_{3,0}$.
\end{example}

The constructions of this section provide a natural generalization of the $k$-conjugate to arbitrary affine permutations.

\begin{definition}
\label{def:kconj}
If $x$ has $\RD(x)=\alpha$ and $\RI(x)=\beta$, then we say that $\alpha$ and $\beta$ are \emph{$k$-conjugates}, and write $\alpha^{(k)}=\beta$.  Additionally, we define the $k$-conjugate of $x$ to be the affine permutation $x^{(k)}$ with $\RD(x^{(k)})=\beta$ and $\RI(x^{(k)})=\alpha$.
\end{definition}

The following proposition is then immediate.

\begin{proposition}
The $k$-conjugate induces an involution on the affine symmetric group.  This involution preserves length and right descent sets of affine permutations.
\end{proposition}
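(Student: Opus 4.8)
The plan is to leverage the bijection of Theorem~\ref{thm:compTabBijection} (and its increasing counterpart, which makes $x\mapsto\RI(x)$ a bijection from $\AS_{k+1}$ onto $k$-codes as well) to reduce every assertion to a statement about $k$-codes. Concretely, I would read Definition~\ref{def:kconj} as setting $x^{(k)}:=\RD^{-1}(\RI(x))$, so that $\RD(x^{(k)})=\RI(x)$ holds by construction. With this in hand, the three claims of the proposition become: (i) $\RI(x^{(k)})=\RD(x)$, which is simultaneously the consistency of Definition~\ref{def:kconj} and the involution property; (ii) $\len(x^{(k)})=\len(x)$; and (iii) $D_R(x^{(k)})=D_R(x)$.

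Claims (ii) and (iii) I expect to dispatch immediately once (i) is secured. For length, every $k$-code of an affine permutation has exactly $\len$-many boxes (the number of boxes of a $k$-code filling equals the number of letters in the associated cyclic decomposition), so $\len(x^{(k)})=\#\,\RD(x^{(k)})=\#\,\RI(x)=\len(x)$. For descents, the Lemma following Proposition~\ref{prop:colPermutation} shows that the descent set (in the sense $\alpha_{i-1}<\alpha_i$) of either $\RD(z)$ or $\RI(z)$ equals $D_R(z)$; hence $D_R(x^{(k)})=\operatorname{des}(\RD(x^{(k)}))=\operatorname{des}(\RI(x))=D_R(x)$. The involution is then formal: applying the construction twice gives $\RD\bigl((x^{(k)})^{(k)}\bigr)=\RI(x^{(k)})=\RD(x)$ by (i), and injectivity of $\RD$ forces $(x^{(k)})^{(k)}=x$.

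The real content is therefore claim (i): that the code-level map $\kappa:=\RI\circ\RD^{-1}$ squares to the identity, equivalently that the relation ``$\RD(x)=\alpha$ and $\RI(x)=\beta$'' is symmetric in $\alpha,\beta$. To prove this I would introduce the Dynkin-diagram reflection, the algebra automorphism $\omega:\nilcox_k\to\nilcox_k$ with $\omega(a_i)=a_{-i}$ (indices mod $k+1$); one checks it preserves the braid and commutation relations and that $\omega(d_A)=u_{-A}$ for connected $A$, so that $\omega$ interchanges cyclically decreasing and cyclically increasing elements while reflecting residues. Since $\omega$ is a length-preserving automorphism carrying the subset lattice to itself via $A\mapsto -A$, it sends the maximal decreasing decomposition of $x$ to the maximal increasing decomposition of $\omega(x)$ and vice versa, yielding the code identities $\RD(\omega(x))=\rho(\RI(x))$ and $\RI(\omega(x))=\rho(\RD(x))$, where $\rho$ is the residue reflection $i\mapsto -i$ on $k$-codes. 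The first identity rewrites as $\RI=\rho\circ\RD\circ\omega$, hence $\kappa=\rho\circ\Omega$ with $\Omega:=\RD\circ\omega\circ\RD^{-1}$; since $\Omega^2=\mathrm{id}$ and $\rho^2=\mathrm{id}$, we get $\kappa^2=\mathrm{id}$ as soon as $\rho$ and $\Omega$ commute. The Grassmannian case (Proposition~\ref{prop:kconjagrees}), where $\kappa$ restricts to the classical $k$-conjugate of $k$-bounded partitions and is manifestly an involution, provides both a consistency check and a base for this reduction.

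The step I expect to be the main obstacle is exactly controlling the residue reflection $\rho$ that $\omega$ introduces: because $\omega$ both swaps increasing/decreasing and reflects, what it hands me is $\kappa=\rho\circ\Omega$ rather than an expression that is visibly involutive, so the crux is to show that $\rho$ commutes with the conjugated automorphism $\Omega=\RD\circ\omega\circ\RD^{-1}$ on all $k$-codes, not merely the single-descent ones. Should the reflection prove hard to tame in general, the fallback is to exhibit a single permutation-level involution $\iota$ on $\AS_{k+1}$ realizing $\RD\circ\iota=\RI$ directly (so that $\kappa(\RD(x))=\RD(\iota(x))$ and $\kappa^2(\RD(x))=\RD(\iota^2(x))=\RD(x)$), or to establish (i) row by row along the maximal decomposition, reducing each two-row interaction to the core computation already handled in the Grassmannian case and propagating symmetry through the reversible insertion algorithm of Section~\ref{ssec:insertion}.
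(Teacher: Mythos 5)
Your handling of the easy claims matches what the paper actually does: the paper offers no proof at all, declaring the proposition ``immediate'' from Definition~\ref{def:kconj}, which \emph{builds in} both $\RD(x^{(k)})=\RI(x)$ and $\RI(x^{(k)})=\RD(x)$. Granting that such an $x^{(k)}$ exists, the involution is forced by injectivity of $\RD$ (Theorem~\ref{thm:compTabBijection}), length is the box count, and the descent claim follows from Corollary~\ref{cor:descents} together with the lemma equating the descent sets of $\RI(x)$ and $\RD(x)$ --- exactly your (ii) and (iii). You have also correctly isolated the one piece of genuine mathematical content, your claim (i): that the unique $y$ with $\RD(y)=\RI(x)$ automatically satisfies $\RI(y)=\RD(x)$. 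This is the well-posedness of Definition~\ref{def:kconj}, and the paper never proves it; it is simply assumed when the definition is stated (with Proposition~\ref{prop:kconjagrees} supplying only the Grassmannian case).

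The gap in your proposal is therefore exactly the one you flag, and it is not a removable technicality. The identity $\kappa=\rho\circ\Omega$ yields $\kappa^2=\mathrm{id}$ only if $\rho$ commutes with $\Omega=\RD\circ\omega\circ\RD^{-1}$, and unwinding that commutation (using your own relation $\rho\circ\RD=\RI\circ\omega$) shows it is equivalent to $\omega$ commuting with $\RD^{-1}\circ\RI$ --- which is essentially a restatement of claim (i). So the reduction buys nothing: the obstacle \emph{is} the theorem. Your fallback is circular for the same reason, since an involution $\iota$ with $\RD\circ\iota=\RI$ is precisely the $k$-conjugation whose existence is in question. Moreover, the reflection $\rho$ genuinely cannot be suppressed: already for length-two elements of $\AS_3$ (take $x=a_0a_1$, with the code conventions fixed by the paper's Example~\ref{ex:kcastlecomputation}) one checks that the unreflected map $\RD^{-1}\circ\RI$ has orbits of size greater than two, hence is not an involution, while inserting $\rho$ turns the composite into the automorphism $\omega$ itself --- an involution, but one that sends $D_R(x)$ to $-D_R(x)$ rather than fixing it. Any complete proof must resolve this tension between the involution claim and the descent-preservation claim; neither your argument nor the paper (which asserts the proposition without proof) does so.
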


\subsection{Generalized Pieri Rule.}
\label{ssec:genpieri}

There is a combinatorial Pieri Rule on $k$-bounded partitions which corresponds to the Pieri Rule for $k$-Schur functions.

In this subsection, we generalize the combinatorial Pieri Rule to $k$-codes, and general affine permutations.  First, we establish the notion of skew $k$-codes.

\begin{definition}
Let $\alpha$ and $\beta$ be $k$-codes.  We say that $\beta$ \emph{contains} $\alpha$, $\alpha \subset \beta$, if $\alpha_i\leq \beta_i$ for every $i \in I$.  We define a \emph{skew $k$-code} to be a pair $(\beta, \alpha)$ where $\alpha \subset \beta$.  The \emph{tableau} of a skew $k$-code is the $k$-code filling of $\beta$ with all boxes from $\alpha$ removed.

We say that a skew-$k$-code is a \emph{horizontal strip} if the tableau of $(\beta, \alpha)$ contains no more than one box in each column.  Likewise, $(\beta, \alpha)$ is a \emph{vertical strip} if its tableau contains no more than one box in each row.
\end{definition}

\begin{proposition}
\label{prop:containment}
Let $x$ and $y$ be affine permutations with $xy\neq 0$.  Then $\RD(y)\subset\RD(xy)$.
\end{proposition}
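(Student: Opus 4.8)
The plan is to reduce the statement to an entrywise inequality on the affine codes and then deduce it from an inversion-set inclusion. By the identification $\RD(w)=\LCRD(w)$ established in Subsection~\ref{ssec:affLehmer}, the entry at residue $i$ satisfies $\RD(w)_i=\#\{\,j<i : w(j)>w(i)\,\}$ (a finite count, independent of the chosen integer representative of $i$ by periodicity). Since $xy\neq 0$ means $xy$ is a genuine affine permutation equal to the composition $(xy)(j)=x(y(j))$, the desired $\RD(y)\subset\RD(xy)$ is exactly the assertion that, for each fixed integer $i$,
\[
\{\,j<i : y(j)>y(i)\,\}\ \subseteq\ \{\,j<i : x(y(j))>x(y(i))\,\}.
\]
I would obtain this by proving the stronger claim that \emph{every} inversion of $y$ is an inversion of $xy$, and then specializing the larger coordinate to $i$.

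The key step, which I expect to be the heart of the argument, is the inversion-set inclusion $\mathrm{Inv}(y)\subseteq\mathrm{Inv}(xy)$. Recall that $xy\neq 0$ in $\nilcoxmon_k$ is precisely the reducedness condition $\len(xy)=\len(x)+\len(y)$. For a position pair $p<q$, being an inversion of $y$ means $y(p)>y(q)$, while being an inversion of $xy$ means $x(y(p))>x(y(q))$; thus an inversion of $y$ fails to survive into $xy$ exactly when $x$ reverses the value pair $y(q)<y(p)$, i.e.\ when $(y(q),y(p))$ is an inversion of $x$. Because $y$ is a bijection commuting with the shift $m\mapsto m+(k+1)$, each inversion class of $x$ corresponds under $y^{-1}$ to a single position pair that is either an inversion of $y$ destroyed by $x$ or a non-inversion of $y$ created by $x$. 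Counting these two types separately gives
\[
\len(xy)=\len(x)+\len(y)-2N,
\]
where $N$ is the number of inversions of $y$ reversed by $x$. Reducedness forces $N=0$, which is precisely $\mathrm{Inv}(y)\subseteq\mathrm{Inv}(xy)$. This is the standard length-additivity lemma for Coxeter groups; the only care needed is to run the bookkeeping over inversion classes modulo the simultaneous shift by $k+1$.

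With the inclusion in hand, fixing the larger coordinate equal to $i$ and letting $p=j$ range over integers $<i$ yields the displayed set inclusion, hence $\RD(y)_i\le\RD(xy)_i$ for every residue $i$, i.e.\ $\RD(y)\subset\RD(xy)$. I would also explicitly flag why the tempting route through the insertion algorithm of Subsection~\ref{ssec:insertion} does \emph{not} apply directly: one builds $\RD(xy)$ by starting from $\RD(x)$ and right-inserting a reduced word for $y$, and although each insertion raises the total box count by one, the bump move transports boxes between columns, so individual column heights are not monotone and no column-by-column comparison with $\RD(y)$ is manifest. It is exactly this failure of local monotonicity that makes the global inversion-set inclusion, rather than a step-by-step insertion argument, the natural tool.
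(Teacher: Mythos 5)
Your proposal is correct, but it takes a genuinely different route from the paper's. The paper stays inside its two-row-move machinery: it stacks the filling of $\RD(x)$ on top of that of $\RD(y)$ and maximizes the product with two-row moves, arguing that every move only deposits boxes into the lower row of the pair it acts on, and that no move acts within the rows coming from $\RD(y)$ (which, being maximal, has no adjacent rows forming $\tp$ columns), so the columns of $\RD(y)$ can only grow. You instead pass through the identification $\RD(w)=\LCRD(w)$ of Subsection~\ref{ssec:affLehmer} --- legitimate to invoke here, since it is proved earlier and independently of this proposition --- and reduce the claim to the inversion-set inclusion $\mathrm{Inv}(y)\subseteq\mathrm{Inv}(xy)$, which you derive from the count $\len(xy)=\len(x)+\len(y)-2N$ together with the fact that $xy\neq 0$ in $\nilcoxmon_k$ is exactly length-additivity, forcing $N=0$; your bookkeeping of inversion classes modulo the shift by $k+1$ is handled correctly, and specializing the larger coordinate gives the entrywise inequalities that constitute containment of $k$-codes by the paper's definition. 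As for what each approach buys: the paper's argument rehearses the stacking-and-maximizing technique that is immediately reused and extended (via the multi-row moves of Figure~\ref{fig.genPieri}) in the proof of Theorem~\ref{thm:genPieri}, so it earns its keep structurally; however, it is terse at the delicate point, since chute moves do transport boxes diagonally between columns and one must also rule out moves becoming newly available between rows of $y$'s part after boxes flow into its top row. Your argument is insensitive to how individual boxes travel, resting instead on the classical Coxeter-theoretic fact about inversion sets under reduced products, and your closing remark correctly diagnoses why a step-by-step insertion argument cannot yield column-by-column monotonicity --- the bump move destroys exactly the local monotonicity such an argument would need.
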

\begin{proof}
Given the $k$-codes $\RD(x), \RD(y)$ we may obtain $\RD(xy)$ by stacking the two castle tableaux appropriately and applying a sequence of two-row moves to obtain a maximal decomposition of $xy$.  In the  application of two-row moves, the lower row is always preserved as columns of type $\tp$ are eliminated.  Since $\RD(y)$ contains no pairs of adjacent rows of type $\tp$, we then observe that the $k$-code of $y$ is preserved as we maximize the product $xy$ to obtain $\RD(xy)$.
\end{proof}

\begin{theorem}[Generalized Pieri Rule]
\label{thm:genPieri}
Let $x$ be an affine permutation with maximal right decomposition $x=d_{\vec{A}}$ and $k$-code $\alpha=\RD(x)$.  Let $B\subsetneq I$.  Suppose the product $d_Bx\neq 0$.  Then the skew composition $(\RD(d_Bx), \RD(x))$ is a horizontal strip and the skew composition $(\RI(d_Bx), \RI(x))$ is a vertical strip.
\end{theorem}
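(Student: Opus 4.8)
The plan is to treat both window vectors $\RD(\cdot)$ and $\RI(\cdot)$ as ordinary (column) $k$-codes, so that $\RD(x)_i$ and $\RI(x)_i$ record column heights, and to prove each assertion by analysing how the entries change under left multiplication by $d_B$. The containment $\RD(x)\subset\RD(d_Bx)$ is Proposition~\ref{prop:containment}; the analogous $\RI(x)\subset\RI(d_Bx)$ follows because, the product being reduced, left multiplication by $d_B$ destroys no inversion of $x$, so every code entry is non-decreasing. Together with the length identity $\len(d_Bx)=\len(x)+|B|$ (valid since $d_Bx\neq 0$ is reduced), this shows that each skew composition has exactly $|B|$ boxes, and it remains only to control how these boxes are distributed.

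For the horizontal strip I would pass to the affine code via $\RD=\LCRD$, so that $\RD(x)_i=\#\{\,j<i:x(j)>x(i)\,\}$ and, since left multiplication acts on values, $\RD(d_Bx)_i=\#\{\,j<i:d_B(x(j))>d_B(x(i))\,\}$. Two facts finish this half. First, reducedness preserves all inversions, so every pair counted for $x$ is still counted for $d_Bx$ and each entry is non-decreasing. Second, I would establish the structural lemma that a cyclically decreasing $d_B$ acts on each connected block $[p,q]$ of $B$ by sending $p\mapsto q+1$ and shifting $[p+1,q+1]$ down by one, so that the only value overtaking a given $v$ from below is the bottom $p$ of the unique block whose range contains $v$. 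Hence for each fixed $i$ at most one index $j<i$ newly satisfies $x(j)<x(i)$ and $d_B(x(j))>d_B(x(i))$, so each column gains at most one box and $(\RD(d_Bx),\RD(x))$ is a horizontal strip.

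For the vertical strip I would run the parallel analysis on the cyclically increasing side, using $\RI=\LCRI$ and the same block description of $d_B$. The crucial difference is that a single column of $\RI$ may genuinely gain several boxes: a connected block contributes all of its new inversions through the single anchoring value $p$, hence to a single column, so a per-column count no longer suffices. What must be shown instead is that the $|B|$ new boxes occupy pairwise distinct rows, i.e.\ the "at most one box per row'' condition for the increasing castle. I would prove this by tracking the residue of each new box, showing that the new boxes carry pairwise distinct residues (for a connected block these turn out to be exactly the residues of $B$, suitably shifted). As an alternative route I would reduce the vertical strip to the horizontal strip already proved, via the relation $\RI=\RD^{(k)}$ of Definition~\ref{def:kconj} together with a lemma that $k$-conjugation of skew $k$-codes exchanges horizontal and vertical strips, generalizing the classical weak-strip duality for $k$-bounded partitions.

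The main obstacle is precisely this second half. The horizontal-strip statement reduces cleanly to a column-by-column count, but the vertical-strip statement is not a counting statement: because one column can absorb an entire block's worth of new boxes, I must control the exact rows (equivalently, residues) into which the new boxes fall and show no two coincide. I expect the block description of $d_B$ to supply this, but verifying that distinct anchoring blocks deposit their boxes into distinct rows, rather than merely distinct columns, is the delicate point, and is where either the refined residue bookkeeping or the $k$-conjugation duality lemma will have to carry the weight.
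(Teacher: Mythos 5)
Your treatment of the first half (the horizontal strip for $\RD$) is correct, and it takes a genuinely different route from the paper. The paper never touches inversions: it first transfers the claim about $(\RD(d_Bx),\RD(x))$ to a claim about $(\RD(xd_B),\RD(x))$ via Proposition~\ref{prop:colPermutation} and left/right symmetry, and then proves that claim by stacking $\RD(x)$ on the one-row code of $d_B$ and maximizing with iterated commutation and chute moves (the multi-row analogues of Lemma~\ref{lem:2rowReduction}), observing that each column absorbs at most one box. You instead work directly with the identification $\RD=\LCRD$ of Subsection~\ref{ssec:affLehmer} and the explicit action of $d_B$ on values: each affected interval $[p,q+1]$ (and its $(k+1)$-translates) is cyclically rotated, so for a fixed $v=x(i)$ at most one value $u<v$, namely the bottom of the unique rotated interval containing $v$, can newly invert past $v$, and since $x$ is a bijection this produces at most one new box in column $i$. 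Together with preservation of inversions under a reduced product, this is a complete and arguably cleaner argument; it buys you independence from the column-permutation transfer the paper leans on, at the cost of verifying the (easy) structural lemma about $d_B$ as a permutation.

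The second half (the vertical strip for $\RI$) has a genuine gap --- the one you flag yourself --- and neither of your proposed routes closes it. On the residue route: your own block analysis shows that all new boxes coming from a connected block $[p,q]$ of $B$ land in the single column of $\RI$ indexed by the position class holding that block's bottom value, and they fill a consecutive interval of rows $(c,c+b]$, where $c$ is the old entry and $b=q-p+1$. The vertical strip condition is exactly that these row intervals, taken over the several blocks of $B$, are pairwise disjoint. Pairwise distinct residues cannot deliver this: two boxes in the \emph{same} row $j$ but in distinct columns $i_1\neq i_2$ automatically have distinct residues $i_1-j\neq i_2-j$ modulo $k+1$, so the invariant you propose to track is strictly weaker than the statement you need. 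What must actually be controlled is the relative position of the old entries $c_t$ at the affected columns --- genuine information about $x$ and about how reducedness (each bottom value occurring before all other values of its interval) constrains those counts --- and your sketch supplies no mechanism for that.

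On the conjugation route: it is circular. Definition~\ref{def:kconj} defines the $k$-conjugate of codes only \emph{through} the pair $(\RD(x),\RI(x))$; the paper gives no independent combinatorial description of this operation on codes for non-Grassmannian elements. So a ``lemma that $k$-conjugation of skew $k$-codes exchanges horizontal and vertical strips,'' applied to the pairs $(\RD(d_Bx),\RD(x))$ and $(\RI(d_Bx),\RI(x))$, is not an available tool: given your first half, it is word-for-word the statement you are trying to prove. (The classical weak-strip duality you cite is a statement about cores and bounded partitions, i.e.\ the Grassmannian case, and does not extend for free.) For comparison, the paper disposes of this half in one sentence, asserting the increasing analogue of its moves argument is ``similar''; that is thin, but within the moves framework there is at least a symmetric machinery to appeal to, whereas in your inversion framework the increasing case is structurally different --- one column absorbs an entire block --- and requires a new argument that is still missing.
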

\begin{proof}
We see that $\RD(x)\subset \RD(d_Bx)$ by Proposition~\ref{prop:containment}.

It is easier to show that the skew composition $(\LD(d_Bx), \LD(x))$ has no more than one box in each column.  The result then follows from Proposition~\ref{prop:colPermutation}, which states that the columns of $\LD(d_Bx)$ are a permutation of the columns of $\RD(d_Bx)$.  Furthermore, showing that $(\LD(d_Bx), \LD(x))$ has no more than one box in each column is equivalent to showing that $(\RD(xd_B), \RD(x))$ has no more than one box in each column.  Thus, we will focus on proving this statement.

To prove this statement, we stack $\RD(x)$ on $\RD(d_B)$ to form a skew $k$-code $(\beta, \alpha)$, and maximize the product using two-row moves.  We note that by Lemma~\ref{lem:2rowEmptyCol} there must be an empty column in $(\beta, \alpha)$ or else the product would not be reduced.  One may then use an algorithm similar to the algorithm in the proof of Lemma~\ref{lem:2rowReduction} to maximize the product and obtain $\RD(xd_B)$.  When $i\in B$ but $i-1\not \in A_1$, (so we have a $\tp$ state), we can use a sequence of two-row moves to move an entire column of $d_A$ downward.  The two types of move needed - iterated commutation moves and iterated chute moves, are illustrated in Figure~\ref{fig.genPieri}.  Otherwise, the algorithm is exactly as in Lemma~\ref{lem:2rowReduction}.

\begin{figure}
  \begin{center}
  \includegraphics[scale=.9]{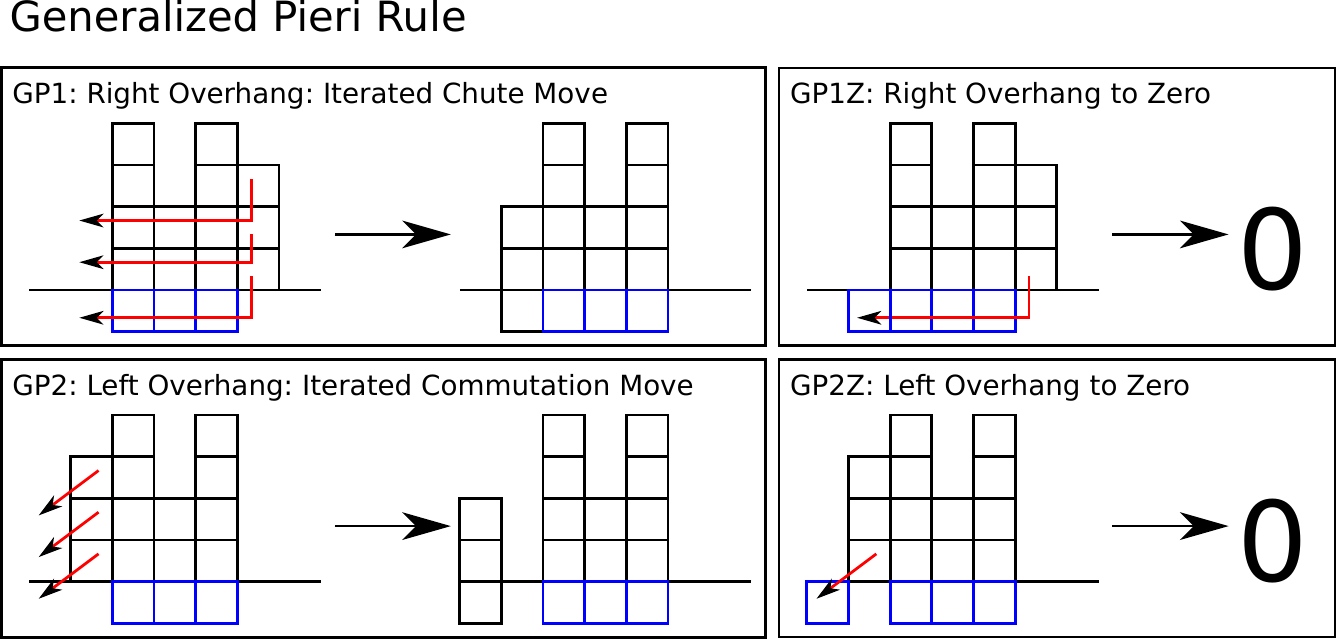}
  \end{center}
  \caption{Multi-row moves used in the proof of the Generalized Pieri Rule, for finding the $k$-code for the product $xd_B$.  The blue boxes represent residues in $d_B$, and the black boxes represent the $k$-code $\RD(x)$.  The empty column appears on the left; reading to the right, we may move down columns appearing over an empty space by applying iterated commutation or chute moves.}
  \label{fig.genPieri}
\end{figure}

The proof that $(\RI(d_Bx), \RI(x))$ is a vertical strip is similar.
\end{proof}

\section{Multiplication of Cyclically Increasing Elements by Cyclically Decreasing Elements.}
\label{sec:incrdecr}

In this section we investigate products of cyclically increasing with cyclically decreasing elements.  We focus in particular on product $u_Bd_A$ with connected $A, B\subsetneq I$, since non-connected cyclic elements are commutative products of connected elements.

\begin{lemma}
Let $A, B \subsetneq I$ with $|B|+|A|\geq k+1$. Then $u_B d_A$ is $i$-dominant if and only if $A$ is $i$-dominant and $B$ is $(i-1)$-dominant.
\end{lemma}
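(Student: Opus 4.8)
The plan is to translate the $i$-dominance condition on each factor into a statement about descent sets and the cyclic structure of the sets, and then to track how the product $u_B d_A$ interacts with right multiplication by generators. Recall that an element $w$ is $i$-dominant precisely when $D_R(w) \subset \{i\}$, so the goal is to show that $D_R(u_B d_A) \subset \{i\}$ if and only if $A$ is $i$-dominant (i.e.\ $D_R(d_A) \subset \{i\}$, equivalently $A = [i-\ell+1, i]$ as a connected set with right descent $i$) and $B$ is $(i-1)$-dominant (so $B = [i-1-m+1, i-1]$ with right descent $i-1$). The size hypothesis $|A| + |B| \geq k+1$ is what forces the two connected blocks to overlap enough that their interaction is rigid; without it the product could split into commuting pieces with multiple descents.

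\textbf{Key steps.} First I would establish the forward direction by contraposition: assuming $A$ is $i$-dominant and $B$ is $(i-1)$-dominant, I would directly locate the right descent of $u_B d_A$. Since $d_A$ is cyclically decreasing and connected with $D_R(d_A) = \{i\}$, any reduced word for $d_A$ ends in $a_i$. I would argue that right-multiplying $u_B d_A$ by a generator $a_p$ is nonzero only when $p = i$, by using the single-generator and two-row moves from Lemma~\ref{lem:twoRowMoves} to understand which residues can be appended. The condition $|A|+|B| \geq k+1$ guarantees that $A \cup (B+1)$ together with the overlap covers enough of $I$ that no second descent can appear; concretely, a second right descent at some $p \neq i$ would require $p \notin A$ and $p-1 \notin A$ in the bottom row after maximization, and the size bound rules this out. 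Conversely, for the other direction I would show that if either $A$ is not $i$-dominant or $B$ is not $(i-1)$-dominant, then $u_B d_A$ acquires a right descent distinct from $i$ (or fails to have descent $i$ at all), again by reading off descents via Corollary~\ref{cor:descents} after converting the mixed product to a canonical form.

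\textbf{Alternative via explicit window notation.} A cleaner route may be to work directly with the permutation action on $\ZZ$. Here $d_A$ acts on positions as a connected cyclically decreasing product and $u_B$ as a connected cyclically increasing product, and $i$-dominance of $u_B d_A$ is the condition $x_i > x_{i+1}$ with $x_j \leq x_{j+1}$ for all $j \neq i$ in the base window. I would compute the effect of $u_B d_A$ on the window, show that the descent of the composite sits at position $i$ exactly when the descent of $d_A$ is at $i$ and the descent of $u_B$ is at $i-1$, and use the size hypothesis to guarantee the two blocks interlock rather than leaving a gap that would create an extra ascent-to-descent transition elsewhere.

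\textbf{Main obstacle.} The hard part will be pinning down exactly how the $(i-1)$-dominance of $B$ matches up with the $i$-dominance of $A$ across the multiplication, i.e.\ the ``shift by one'' in the descent index. This shift is natural given that cyclically increasing and decreasing elements are related by a reflection and that the near-commutation rules of Proposition~\ref{prop:updowndownup} (and the bump/braid moves) introduce a $-1$ shift in residues when moving a generator past a cyclically decreasing element. I expect the crux is to verify carefully that the size bound $|A|+|B| \geq k+1$ is precisely what forces the unique overlap pattern, ruling out the degenerate cases where $u_B d_A$ could be written with a smaller combined support and thereby admit an unexpected second descent.
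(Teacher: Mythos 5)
Your proposal has genuine gaps in both directions, and it also misplaces the role of the hypothesis $|A|+|B|\geq k+1$. For the direction where $A$ is assumed $i$-dominant and $B$ is assumed $(i-1)$-dominant (which you mislabel as the forward direction by contraposition; it is the converse), the tools you invoke do not apply: the two-row moves of Lemma~\ref{lem:twoRowMoves} are stated for products $d_Bd_A$ of two cyclically \emph{decreasing} elements, and Corollary~\ref{cor:descents} reads descents off a maximal cyclically decreasing decomposition; neither says anything about the mixed product $u_Bd_A$, which is exactly where the difficulty lies. In particular, when $|A|+|B|>k+1$ the connected sets $A$ and $B$ overlap, and one must first prove that $u_Bd_A$ is reduced (nonzero) before there are any descents to read off; your sketch has no mechanism for this. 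The tool that handles mixed products is the extended braid relation (Lemma~\ref{lem:extendedBraidRelation}), via a computation of the kind carried out for Corollary~\ref{cor:fatMove} showing $u_Bd_A=d_{A+1}u_{B+1}$. The paper avoids this computation altogether by an indirect argument you do not anticipate: it proves the other direction first, then expands $e_{|B|}h_{|A|}=s_{(1^{|B|})}^{(k)}s_{(|A|)}^{(k)}$, uses the already-proved direction to force every $0$-dominant summand of this product to have the form $u_{B'}d_{A'}$ with $|A'|=|A|$ and $|B'|=|B|$, concludes that the product is a single $k$-Schur function, and invokes uniqueness of $i$-dominant summands (Corollary~\ref{cor:shiftDominance}) together with the Dynkin diagram automorphism to obtain reducedness and $i$-dominance of $u_Bd_A$ for free.

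For the direction where $u_Bd_A$ is assumed $i$-dominant, ``reading off descents after converting the mixed product to a canonical form'' omits the entire argument, since the conversion is the hard part. Writing $A=[i,l]$, the content of the paper's proof is: (a) $D_R(u_B)\subseteq\{i-1,l+1\}$, so $B$ has at most two connected components; (b) a component of $B$ with right descent $l+1$ must be a singleton, or a braid relation in $u_Bd_A$ creates a right descent at $l+1\neq i$; and (c) the bound $|A|+|B|\geq k+1$ then forces $B$ to consist of a single component with right descent $i-1$. This is the direction where the size hypothesis is indispensable — not, as you assert, the other one. Indeed, without the bound your contrapositive claim is false: take $k=4$, $A=\{0\}$, $B=\{1,3,4\}$; then $u_Bd_A=a_3a_4a_1a_0$ is $0$-dominant even though $B$ is disconnected and hence not $4$-dominant. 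So any proof of this direction must use $|A|+|B|\geq k+1$ in essentially the way steps (a)--(c) do, and your sketch neither identifies the possible extra descent at $l+1$ nor performs the component analysis into which the bound enters.
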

\begin{proof}
Assume $u_Bd_A$ is $i$-dominant.  Then it is clear that $A$ must be $i$-dominant (and thus connected).

Let $A=\{i, i+1, \ldots, l \}$. In order for the product $u_Bd_A$ to be $i$-dominant, we must have $D_R(B)\subseteq \{i-1, l+1\}$, and thus $B$ has at most two connected components.  However, since $u_B$ is cyclically increasing, if there is a component with right descent $l+1$, that component must have cardinality one, or else there will be a braid relation in the product $u_Bd_A$ creating a right descent at $l+1\neq i$.  

Now if $|B|+|A|> k+1$, we have $B \cap A \neq \emptyset$.  We have $|A|<k+1$, so $|B|\geq 2$.  In this case, $B$ must have a single connected component because the component with right descent $i-1$ must be large enough to overlap with $A$, since the component with right descent $l+1$ has cardinality 1.  But then $l+1$ is in the component with right descent $i-1$, implying that there was only one component to begin with.

If $|B|+|A|=k+1$, by similar reasoning, we have $B=I\setminus A$, and is thus connected with right descent $i-1$.  

For the reverse direction, we associate with $B$ the $k$-bounded partition $(1^{|B|})$, and we associate with $A$ the $k$-bounded partition $(|A|)$.  Then we consider the product $e_{|B|}h_{|A|} = s_{(1^{|B|})}^{(k)}s_{(|A|)}^{(k)}$.  By the forward direction, any $0$-dominant element in this product is of the form $u_{B'} d_{A'}$ for some $A', B'\subsetneq I$, where $A'$ is $0$-dominant, $B'$ is $k$-dominant, $|A'|=|A|$, and $|B'|=|B|$.  (If these last two conditions did not hold, we could find such an expression for the element because $u_{B'} d_{A'}$ is a summand in the product $e_{|B|}h_{|A|}$.)  But this specifies $A'$ and $B'$ completely.  Thus, there is only one summmand in $e_{|B|}h_{|A|}$ when expressed as a sum of $k$-Schur functions.  As such, there is only one $i$-dominant term, and it may be obtained by applying the Dynkin diagram automorphism $i$ times.  This exactly recovers the sets $A$ and $B$, and implies that the product $u_Bd_A$ is reduced and $i$-dominant.
\end{proof}

In proving this Lemma, we have also proved the following Corollary:
\begin{corollary}
Suppose $\lambda$ splits into two partitions $(1^p)$ and $(q)$, with $p+q\geq k+1$.  Then 
\[
s_\lambda^{(k)} = s_{(1^p)}^{(k)}s_{(q)}^{(k)}.
\]
\end{corollary}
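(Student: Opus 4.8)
The plan is to read the statement off the reverse direction of the preceding Lemma, which has already done essentially all of the algebraic work. First I would rewrite the right-hand side using the identifications $s_{(1^p)}^{(k)}=e_p$ and $s_{(q)}^{(k)}=h_q$, exactly as in the Lemma's proof (these are valid since $p,q\le k$, which holds because the two components fit as connected pieces on the cylinder, equivalently because the associated sets lie in $I$ and are proper). Thus the claim becomes the statement that $e_p h_q$ equals a single $k$-Schur function in $\nilcox_k$, with the correct index.

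Next I would invoke the reverse direction of the preceding Lemma with $i=0$, $|A|=q$, and $|B|=p$. That argument shows that $e_p h_q$ contains exactly one $0$-dominant summand, namely the reduced product $u_B d_A$ where $A=[0,q-1]$ is the $0$-dominant connected set of size $q$ and $B=[k-p+1,k]$ is the $k$-dominant connected set of size $p$; the hypothesis $p+q\ge k+1$ is precisely what forces these sets to abut so that $u_B d_A$ is reduced with its single right descent at $0$. To upgrade this to $k$-Schur functions, write $e_p h_q=\sum_\nu c^\nu s_\nu^{(k)}$ with $c^\nu\in\ZZ_{\ge 0}$. By Corollary~\ref{cor:shiftDominance} each $s_\nu^{(k)}$ carries a unique $0$-dominant summand $w_\nu$, appearing with coefficient one. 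Counting $0$-dominant monomials on both sides yields $\sum_\nu c^\nu=1$, so the product equals a single $k$-Schur function $s_\nu^{(k)}$ whose $0$-dominant element is $w_\nu=u_B d_A$.

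The remaining, and main, step is to identify $\nu$ with $\lambda$; this is the only place where the geometric splitting hypothesis of Definition~\ref{def:splits} is used, rather than the purely numerical condition $p+q\ge k+1$. For this I would compute the $0$-dominant element $w_\lambda$ attached to $\lambda$ directly from its diagram. Because $\lambda$ splits into the row $(q)$ and the column $(1^p)$, its $k$-boundary falls into two connected components separated by a gate on the cylinder, and reading off a reduced word via Corollary~\ref{cor:cycDecrWord} (and its cyclically increasing analogue) separates $w_\lambda$ into the cyclically increasing block contributed by the column and the cyclically decreasing block contributed by the row, with no braid interaction between them precisely because $p+q\ge k+1$ keeps the components from touching. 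Matching these two blocks against $u_B$ and $d_A$ gives $w_\lambda=u_B d_A=w_\nu$, and since $\lambda\mapsto w_\lambda$ is a bijection between $k$-bounded partitions and $0$-dominant affine permutations (Theorem~\ref{thm:compTabBijection}), we conclude $\nu=\lambda$. I expect translating the ``split'' into this explicit factorization of $w_\lambda$ to be the crux, as it is the one genuinely combinatorial ingredient beyond the Lemma.
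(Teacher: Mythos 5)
Your proposal is correct and follows essentially the same route as the paper, which obtains this corollary as a direct byproduct of the reverse direction of the preceding Lemma: the forward direction forces any $0$-dominant summand of $e_p h_q = s_{(1^p)}^{(k)}s_{(q)}^{(k)}$ to be the unique reduced product $u_B d_A$ with $A$ $0$-dominant and $B$ $k$-dominant, so the product is a single $k$-Schur function. Your final step, explicitly identifying the index as $\lambda$ via Corollary~\ref{cor:cycDecrWord} applied to $\lambda=(q,1^p)$, is a detail the paper leaves implicit (it simply states the corollary was proved ``in proving this Lemma''), and spelling it out is a sound addition rather than a departure.
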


\begin{corollary}
\label{cor:fatMove}
If $u_Bd_A$ is $i$-dominant with $|B|+|A|\geq k+1$, we have:
\[
u_Bd_A = d_{A+1} u_{B+1},
\]
where $S+1$ is the set obtained by adding 1 to each element of $S \subset I$.  In particular, $u_B$ is $i-1$-dominant.
\end{corollary}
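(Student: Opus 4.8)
The plan is to separate the two assertions. The last sentence is immediate: the preceding Lemma, applied to the hypothesis that $u_Bd_A$ is $i$-dominant with $|B|+|A|\geq k+1$, says that $A$ is $i$-dominant and $B$ is $(i-1)$-dominant, so in particular $u_B$ is $(i-1)$-dominant. Concretely this means $A=[i,l]$ and $B=[p,i-1]$ for suitable $l,p$, whence $A+1=[i+1,l+1]$ and $B+1=[p+1,i]$; thus $d_{A+1}$ is $(i+1)$-dominant and $u_{B+1}$ is $i$-dominant. These observations pin down exactly which sets appear on the right-hand side.

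For the identity $u_Bd_A = d_{A+1}u_{B+1}$, I would reuse the symmetric-function machinery from the proof of the Lemma. There it was shown that $u_Bd_A$ is the unique $i$-dominant summand of $e_{|B|}h_{|A|}=s_{(1^{|B|})}^{(k)}s_{(|A|)}^{(k)}=s_\lambda^{(k)}$, where $\lambda$ is the split partition of the Corollary immediately above. Because every $k$-Schur function is, by the Pieri rule, a polynomial in the $h_i$, both $e_{|B|}=s_{(1^{|B|})}^{(k)}$ and $h_{|A|}$ lie in the commutative subalgebra of $\nilcox_k$ generated by the $h_i$ (Lam's theorem). Hence $e_{|B|}h_{|A|}=h_{|A|}e_{|B|}$ as elements of $\nilcox_k$, and this single element admits the two expansions $\sum u_{B'}d_{A'}$ and $\sum d_{A'}u_{B'}$.

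The key step is then to identify the unique $i$-dominant summand of the second expansion $h_{|A|}e_{|B|}=\sum_{|A'|=|A|,\,|B'|=|B|} d_{A'}u_{B'}$. I would establish the mirror of the Lemma for decreasing-then-increasing products: $d_{A'}u_{B'}$ is $i$-dominant exactly when $B'$ is $i$-dominant and $A'$ is $(i+1)$-dominant. Given the fixed sizes $|A'|=|A|$ and $|B'|=|B|$, these dominance conditions force $B'=[p+1,i]=B+1$ and $A'=[i+1,l+1]=A+1$, so the unique $i$-dominant summand of $h_{|A|}e_{|B|}$ is $d_{A+1}u_{B+1}$. Since $s_\lambda^{(k)}$ has exactly one $i$-dominant summand by Corollary~\ref{cor:shiftDominance}, the two descriptions of that summand must coincide, which yields $u_Bd_A=d_{A+1}u_{B+1}$.

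The main obstacle is precisely this mirror statement, i.e.\ verifying that $d_{A+1}u_{B+1}$ is reduced and $i$-dominant (equivalently, that left-multiplication by $d_{A+1}$ introduces no right descent beyond the $i$ carried by $u_{B+1}$). This is the increasing/decreasing-swapped analog of the forward direction of the Lemma, and I would prove it by rerunning that window-notation descent analysis with the roles of $u$ and $d$ interchanged, as sanctioned by the remark opening Section~\ref{sec:kcastles} that all such results adapt to the opposite cyclic type. As a self-contained alternative to keep in reserve, one can bypass the mirror Lemma and prove the identity by direct rewriting: the overlap of $B=[p,i-1]$ and $A=[i,l]$ produces a palindromic subword to which the Extended Braid Relation (Lemma~\ref{lem:extendedBraidRelation}) applies, converting an up-then-down pattern into a down-then-up pattern while shifting each residue up by one, with the remaining non-overlapping residues carried across by commutations.
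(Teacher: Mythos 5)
Your proposal is correct, but it takes a genuinely different route from the paper. The paper's proof is an explicit word computation: after invoking the Lemma to pin down $A=[i,l]$ and $B=[j,i-1]$ (connected, with $D_R(u_B)=\{i-1\}$), it verifies $u_Bd_A=d_{A+1}u_{B+1}$ directly --- by pure commutations when $|B|+|A|=k+1$ (so $j=l+1$), and via the extended braid relation (Lemma~\ref{lem:extendedBraidRelation}) when the sets overlap. That computation is exactly the ``backup'' you hold in reserve in your last paragraph. Your primary route instead forces the identity by abstract uniqueness: $e_{|B|}h_{|A|}$ and $h_{|A|}e_{|B|}$ are the same element of the commutative subalgebra (your justification that $e_{|B|}=s_{(1^{|B|})}^{(k)}$ lies in the span of the $h_\mu$'s by Pieri unitriangularity is sound, and consistent with the paper's own use of $e_{|B|}h_{|A|}=s_{(1^{|B|})}^{(k)}s_{(|A|)}^{(k)}$); this element has a unique $i$-dominant summand by Corollary~\ref{cor:shiftDominance}; and the ``mirror'' Lemma identifies that summand in the expansion $h_{|A|}e_{|B|}=\sum d_{A'}u_{B'}$ as $d_{A+1}u_{B+1}$, since dominance plus the fixed cardinalities pin down $A'$ and $B'$. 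The one ingredient you leave as a sketch is that mirror Lemma; note that rather than rerunning the descent analysis with $u$ and $d$ interchanged, you can get it for free by applying the index-negation automorphism $a_j\mapsto a_{-j}$ of $\nilcox_k$ (a symmetry of the cyclic Dynkin diagram), which sends $d_A\mapsto u_{-A}$ and $u_B\mapsto d_{-B}$ and negates right-descent sets, so the forward direction of the paper's Lemma transforms into precisely the statement you need. As for what each approach buys: the paper's computation is short, self-contained, and simultaneously re-verifies that the rewritten product is reduced; yours avoids all word manipulation and shows the identity is dictated by commutativity together with uniqueness of dominant summands, at the cost of heavier machinery and one additional lemma.
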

\begin{example}
This calculation is easiest to see with a particular example; the general case is identical.  Suppose $I=\{0,1,2,3,4,5,6\}$ and $A=\{0,1,2,3,4\}, B=\{3,4,5,6\}$.  Then:
\begin{eqnarray*}
u_Bd_A &=& a_{3456}a_{43210} \\
       &=& a_{345643210} \\
       &=& a_{345432160} \\
       &=& a_{543452160} \\
       &=& a_{543214560} \\
       &=& a_{54321}a_{4560} \\
       &=& d_{A+1}u_{B+1}.
\end{eqnarray*}
\end{example}
\begin{proof}[Proof of Corollary.]
This follows directly from the Lemma and a simple computation. By the Lemma, $A=\{i, i+1, \ldots, l\}$ and $B=\{j, j+1, \ldots, i-1,\}$ are connected, and so $D_R(u_B)=\{i-1\}$.  Then:
\[
u_Bd_A= (a_j a_{j+1} \cdots a_{i-2} a_{i-1}) (a_l a_{l-1} \cdots a_{i+1} a_i).
\]
If $|B|+|A|=k+1$, we have $j=l+1$.  Then using the commutation relations:
\begin{eqnarray*}
u_Bd_A &=& (a_{l+1} a_{l+2} \cdots a_{i-2} a_{i-1}) (a_l a_{l-1} \cdots a_{i+1} a_i) \\
       &=& ( a_{l+1} a_l a_{l-1} \cdots a_{i+1} ) ( a_{l+2} \cdots a_{i-2} a_{i-1} a_i ) \\
       &=& d_{A+1}u_{B+1}.
\end{eqnarray*}
If $|B|+|A|>k+1$, then $B$ and $A$ must overlap.  Thus we have $l\in B$.  In the following computation, we use a sequence of subscripts to indicate the product of $a_i$'s.  (So, for example, $a_{1,2,3}=a_1a_2a_3$.)  (The computation uses the extended braid relation, Lemma~\ref{lem:extendedBraidRelation}.)  Then:
\begin{eqnarray*}
u_Bd_A &=& a_{j,j+1,\cdots,l,l+1,l+2,\cdots,i-2,i-1} a_{l,l-1,\cdots,i+1,i} \\
       &=& a_{j,j+1,\cdots,l,l+1}a_{l+2,\cdots,i-2,i-1} a_{l,l-1,\cdots,i+1}a_i \\
       &=& a_{j,j+1,\cdots,l,l+1} a_{l,l-1,\cdots,i+1} a_{l+2,\cdots,i-2,i-1} a_i \\
       &=& a_{j,j+1,\cdots,l,l+1} a_{l,l-1,\cdots,j+1,j,j-1,\cdots,i+1} a_{l+2,\cdots,i-2,i-1} a_i \\
       &=& a_{j,j+1,\cdots,l,l+1,l,\cdots,j+1,j}a_{j-1,\cdots,i+1} a_{l+2,\cdots,i-2,i-1} a_i \\
       &=& a_{l+1,l,\cdots,j+1,j,j+1,\cdots,l,l+1}a_{j-1,\cdots,i+1} a_{l+2,\cdots,i-2,i-1} a_i \\
       &=& a_{l+1,l,\cdots,j+1,j}a_{j+1,\cdots,l,l+1}a_{j-1,\cdots,i+1} a_{l+2,\cdots,i-2,i-1} a_i \\
       &=& a_{l+1,l,\cdots,j+1,j}a_{j-1,\cdots,i+1} a_{j+1,\cdots,l,l+1} a_{l+2,\cdots,i-2,i-1,i} \\
       &=& d_{A+1}u_{B+1}.
\end{eqnarray*}
This completes the proof.
\end{proof}

\subsection{Products of Cyclically Increasing and Decreasing Elements}
\label{ssec:interactions}

We catalog the result of multiplying $u_Bd_A$ for any connected $A, B \subsetneq I$.  First we fix some notation.

\begin{definition}
Let $B\subsetneq I$ be connected, with $B=\{i, i+1, \ldots, j-1, j\}$.  Set:
\begin{eqnarray*}
B^+ &=& B \cup \{ j+1 \} \\
B^- &=& B \setminus \{ j \} \\
B_+ &=& B \cup \{ i-1 \} \\
B_- &=& B \setminus \{ i \}
\end{eqnarray*}
Additionally, let the sets with both subscripts and superscripts be defined in the obvious way.  (So that $B^+_- = (B\cup \{j+1\}) \setminus \{i\}$, for example.)
\end{definition}

\begin{lemma}
\label{lem:breakCycle}
Let $B\subsetneq I$ be connected, with $B=\{i, i+1, \ldots, l, l+1, \ldots, j-1, j\}$.  
Set $B_1=\{i, i+1, \ldots, l\}$ and $B_2=\{l, l+1, \ldots, j\}$.  Then:
\begin{eqnarray*}
u_B &=& u_{B_1} u_{B_2} \\
d_B &=& d_{B_2} d_{B_1}. \\
\end{eqnarray*}
\end{lemma}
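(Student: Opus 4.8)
The plan is to prove the two identities by direct manipulation using the definition of cyclically increasing and decreasing elements together with the commutation relations in $\nilcox_k$; the two claims are dual, so I would prove the increasing identity $u_B = u_{B_1}u_{B_2}$ in full and then remark that the decreasing identity follows by the analogous argument (or by reversing reduced words, since reversal interchanges $u$ and $d$ and reverses products).

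First I would write out reduced words for all three elements straight from the definition. Since $B = \{i, i+1, \ldots, j\}$ is connected, the cyclically increasing element is the genuinely increasing word $u_B = a_i a_{i+1} \cdots a_j$ (because for consecutive residues the smaller must appear to the left). Similarly $u_{B_1} = a_i a_{i+1} \cdots a_l$ and $u_{B_2} = a_l a_{l+1} \cdots a_j$. The key step is then to observe that the concatenation $u_{B_1}u_{B_2} = (a_i \cdots a_l)(a_l a_{l+1} \cdots a_j)$ contains the repeated letter $a_l$, so a priori this is not obviously reduced; the heart of the argument is to resolve this repetition. But here I would invoke $a_l a_l = 0$ only as a warning that this is \emph{not} what happens: instead, since $a_l$ commutes with $a_{l+1}, \ldots, a_j$ except for $a_{l+1}$, I cannot simply cancel. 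The cleaner route is to note that the claim as stated has $l$ appearing in \emph{both} $B_1$ and $B_2$, so I must check carefully that $u_{B_1}u_{B_2}$ really equals $a_i \cdots a_j$ rather than having two copies of $a_l$.

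The resolution, and the step I expect to be the main obstacle, is to verify that the intended reading of the lemma is that the overlapping $a_l$ is a genuine equality of monoid elements: I would compute $u_{B_1}u_{B_2} = a_i\cdots a_{l-1}\,(a_l\,a_l)\,a_{l+1}\cdots a_j$ and recognize this would vanish, so the correct interpretation must be that the product telescopes via the structure of connected cyclically increasing elements rather than by naive concatenation of the two increasing words. Thus the real content is that $u_B$, built as a \emph{single} connected increasing element, factors through the intermediate residue $l$; I would establish this by showing both sides act identically on the underlying permutation, or equivalently by exhibiting the common reduced word $a_i a_{i+1}\cdots a_j$ and checking that the defining left-to-right order constraints for $u_{B_1}u_{B_2}$ (namely $i < i+1 < \cdots < l$ then $l < l+1 < \cdots < j$) are jointly consistent with the single increasing order on $B$. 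Once the increasing identity is in hand, the decreasing identity $d_B = d_{B_2}d_{B_1}$ follows by the mirror-image argument: for decreasing elements consecutive residues force the \emph{larger} to the left, so $d_B = a_j a_{j-1}\cdots a_i$, $d_{B_2} = a_j \cdots a_l$, $d_{B_1} = a_l \cdots a_i$, and the same telescoping through $l$ yields the product, completing the proof.
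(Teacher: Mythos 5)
You correctly spotted the real issue: as printed, $B_1=\{i,\ldots,l\}$ and $B_2=\{l,\ldots,j\}$ share the residue $l$, so the product $u_{B_1}u_{B_2}=(a_i\cdots a_l)(a_l\cdots a_j)$ contains an adjacent pair $a_la_l=0$. But your proposed resolution---that ``the product telescopes via the structure of connected cyclically increasing elements rather than by naive concatenation''---is not a valid move, and this is a genuine gap. In $\nilcoxmon_k$ the product of two elements \emph{is} the concatenation of their reduced words, read modulo the defining relations; there is no mechanism by which two adjacent copies of $a_l$ merge into one. Concretely, any nonzero product in the nil-Coxeter monoid satisfies $\len(xy)=\len(x)+\len(y)$, so $u_{B_1}u_{B_2}$ is either $0$ or has length $|B_1|+|B_2|=|B|+1$; in neither case can it equal $u_B$, which has length $|B|$. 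For the same reason your fallback of ``showing both sides act identically on the underlying permutation'' cannot work: with the overlapping sets the left-hand side is not a permutation at all, it is $0$. Checking that the ordering constraints defining $u_{B_1}$ and $u_{B_2}$ are ``jointly consistent'' with the single increasing word $a_i\cdots a_j$ does not make the monoid product equal to that word.

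The correct resolution is that the statement contains a typographical slip: the two pieces are meant to be disjoint, $B_1=\{i,\ldots,l\}$ and $B_2=\{l+1,\ldots,j\}$, so that $B=B_1\sqcup B_2$. This reading is confirmed by how the lemma is actually used in Proposition~\ref{prop:updowndownup}, where $B$ is split into the \emph{disjoint} pieces $B\cap A$ and $B\setminus(B\cap A)$. With the disjoint reading, the first paragraph of your proposal already finishes the proof: $u_{B_1}u_{B_2}=(a_i\cdots a_l)(a_{l+1}\cdots a_j)=a_i\cdots a_j=u_B$, and $d_{B_2}d_{B_1}=(a_j\cdots a_{l+1})(a_l\cdots a_i)=a_j\cdots a_i=d_B$, which is exactly the ``follows immediately from the definitions'' argument the paper gives. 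Your remark that the decreasing identity can alternatively be deduced from the increasing one by reversing reduced words is fine; what you should not do is accept the overlapping statement at face value and then invent an interpretation under which a false identity becomes true.
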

\begin{proof}
This is follows immediately from the definitions of $u_B$ and $d_B$.
\end{proof}

\begin{proposition}
\label{prop:simpleProducts}
Let $B=\{i, i+1, \ldots, j-1, j\}$ and $A=\{p, p+1, \ldots, q-1, q\}$.  Then we have the following:
  \begin{equation}
    u_B d_A =
    \begin{cases}
      0              & \text{$j= q$}\\
      d_A u_{B^+_-}  & \text{if $B\subset A$ and $j\neq q$}\\
      d_{A^+_-} u_B  & \text{if $A\subset B$ and $j\neq q$}\\
      d_{A_-}u_{B^+} & \text{if $A\cap B=\emptyset, i=p-1, j\neq q+1$}\\
      d_{A^+}u_{B_-} & \text{if $A\cap B=\emptyset, i\neq p-1, j=q+1$}\\
      d_{A^+_-}u_{B^+_-} & \text{if $A\cap B=\emptyset, i=p-1, j=q+1$}\\
      d_Au_B         & \text{if $A\cap B=\emptyset, i\neq p-1, j\neq q+1$}\\
    \end{cases}
  \end{equation}
\end{proposition}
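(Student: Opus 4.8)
The plan is to establish each of the seven cases by direct computation in the affine nil-Coxeter monoid $\nilcoxmon_k$, reducing the work via the structural tools already developed. Since both $A$ and $B$ are connected intervals, $u_B$ and $d_A$ each have a single reduced word read off the interval, and the product $u_B d_A$ is a concatenation of two monotone runs of generators. My first move is to dispose of the easy cases. When $A \cap B = \emptyset$ and the intervals are neither adjacent nor with $j = q+1$ (the last case), no braid relations can fire and only possible commutations occur, so $u_B d_A = d_A u_B$ follows immediately from the commutation relations $a_i a_j = a_j a_i$ for $|i-j|>1$. The vanishing case $j = q$ is handled by observing that the top generator $a_j$ of $u_B$ and the top generator $a_q = a_j$ of $d_A$ collide: after sliding generators past one another we produce a repeated generator $a_j^2 = 0$, or more cleanly, one checks $j \in D_R(u_B) = \{j\}$ while $j \in D_L(d_A)$, triggering the \emph{Zero} single-generator move from the start of Subsection~\ref{ssec:maxMoves}.

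For the two containment cases ($B \subset A$ and $A \subset B$), the strategy is to apply a single braid relation repeatedly, i.e. to invoke the Extended Braid Relation (Lemma~\ref{lem:extendedBraidRelation}) to reverse a monotone run and thereby reorganize the product. When $B \subset A$, the increasing run $u_B$ sits inside the range of the decreasing run $d_A$; sliding $u_B$ rightward through $d_A$ via iterated braid moves pulls $d_A$ to the front unchanged and leaves a shifted increasing element $u_{B^+_-}$. This is essentially the single-generator \emph{Braid} move $s_i d_A = d_A s_{i+1}$ iterated across the interval $B$, which shifts the residues of $B$ up by one and also drops the bottom residue while adding a top one—exactly the $B^+_-$ bookkeeping. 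The case $A \subset B$ is symmetric, with the roles of shifting falling on $A$.

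The adjacency cases ($A \cap B = \emptyset$ with $i = p-1$, or $j = q+1$, or both) are the analogues of Corollary~\ref{cor:fatMove} but in the non-overlapping regime, and these I would derive by a short explicit braid computation exactly in the style of the worked example preceding the proof of that corollary. When $i = p-1$, the intervals $B$ and $A$ are adjacent with $B$ sitting immediately below $A$ in residue; the generators at the junction ($a_i$ at the bottom of $u_B$ meets $a_p = a_{i+1}$ at the top of $d_A$) admit one braid relation, which effects the transfer encoded in $d_{A_-} u_{B^+}$. The bilateral adjacency case combines both junctions and yields $d_{A^+_-} u_{B^+_-}$.

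The main obstacle I expect is verifying the precise endpoint bookkeeping—that is, confirming in each case that the resulting sets are exactly $A^+, A_-, B^+, B_-$ and their combinations rather than some off-by-one variant, and confirming that the stated side conditions ($j \neq q$, $j \neq q+1$, etc.) are precisely what separate one case from the next. The computations themselves are routine applications of braid and commutation relations, but because the indices wrap modulo $k+1$ and because adjacency of intervals on the cyclic Dynkin diagram can occur at either end, the careful part is checking that the seven cases are genuinely exhaustive and mutually exclusive for connected $A, B \subsetneq I$, and that no further braid relation fires unexpectedly to collapse or extend an interval beyond the claimed result. I would organize the proof to first fix the cyclic picture of how two intervals on the Dynkin diagram can meet, enumerate the configurations, and then attach the appropriate one- or two-relation computation to each.
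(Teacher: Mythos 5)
Your overall strategy coincides with the paper's: the paper's own proof is just the remark that all cases follow from straightforward computations with the extended braid relation (Lemma~\ref{lem:extendedBraidRelation}), ``nearly identical'' to the computation in Corollary~\ref{cor:fatMove}, with the details omitted; you are supplying those details. Your handling of the zero case ($u_B$ ends in $a_j$, $d_A$ begins in $a_q=a_j$, so $a_j^2=0$), of the disjoint non-adjacent case (pure commutation), and of both containment cases (iterating the single-generator move $s_md_A=d_As_{m+1}$ across $B$, giving $d_Au_{B+1}=d_Au_{B^+_-}$, and the mirror move for $A\subset B$) is correct.

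However, one step as you describe it cannot be carried out: in the disjoint \emph{adjacent} cases you claim the generators at the junction ``admit one braid relation, which effects the transfer.'' When $A\cap B=\emptyset$, every generator occurs exactly once in the word $u_Bd_A$, so no braid relation $a_ma_{m+1}a_m=a_{m+1}a_ma_{m+1}$ can ever apply. The actual mechanism is commutation plus \emph{regrouping}: the junction generator is simply absorbed from one monotone run into the other. For instance, with $B=\{0,1\}$, $A=\{2,3\}$ and $k\geq 4$, one has $u_Bd_A=a_0a_1a_3a_2=a_3\,(a_0a_1a_2)=d_{A_-}u_{B^+}$. This is exactly what happens in the $|A|+|B|=k+1$ branch of the proof of Corollary~\ref{cor:fatMove}, which the paper explicitly performs ``using the commutation relations''; the extended braid relation is only needed when $A$ and $B$ overlap. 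The fix is trivial, but the step as stated would fail. Separately, your flagged concern about endpoint bookkeeping is more justified than you may realize: the example above satisfies $i\neq p-1$ and $j\neq q+1$, so by the printed conditions it should give $d_Au_B$, yet $u_Bd_A=d_{A_-}u_{B^+}\neq d_Au_B$. The adjacency conditions as printed appear to have $i$ and $j$ interchanged: the junctions relevant to the displayed formulas are $j=p-1$ (top of $B$ meets bottom of $A$) and $i=q+1$ (top of $A$ meets bottom of $B$); note that with $A\cap B=\emptyset$ the literal conditions $i=p-1$ or $j=q+1$ force $B$ to be a singleton. A written-out proof along your lines should enumerate the cyclic configurations as you propose, but prove the formulas under these corrected conditions.
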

\begin{proof}
These all follow from straight-forward computations and the extended braid relation, Lemma~\ref{lem:extendedBraidRelation}.

These computations are nearly identical to the computation in the proof of Corollary~\ref{cor:fatMove}, and are thus omitted here.
\end{proof}

\begin{proposition}
\label{prop:updowndownup}
Let $B, A\subsetneq I$, with both $B$ and $A$ connected.  Then there exist connected sets $B', A' \subsetneq I$ with $|A'+B'|=|A+B|$ such that
\[
u_Bd_A = d_{A'} u_{B'}.
\]
Furthermore, the pair $(A', B')$ is one of $(A, B), (A, B^+_-), (A^+_-, B), (A^+, B_-)$, or $(A_-, B^+)$.
\end{proposition}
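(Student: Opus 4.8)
The plan is to reduce the general statement to the connected-pair computations already catalogued in Proposition \ref{prop:simpleProducts}. The key observation is that Proposition \ref{prop:simpleProducts} treats $u_B d_A$ only under the hypothesis that \emph{both} $A$ and $B$ are single connected intervals; here $A$ and $B$ are connected by assumption, so one might hope to invoke it directly. However, the issue is that even when $A$ and $B$ are connected, the product $u_B d_A$ need not be expressible with a single connected decreasing factor on the left and a single connected increasing factor on the right without splitting one of the intervals first. So the first step I would take is to case-split on the overlap structure of $A$ and $B$ exactly as in Proposition \ref{prop:simpleProducts}: the cases $j=q$ (product is zero, excluded), $B\subset A$, $A\subset B$, and the various disjoint sub-cases. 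In each of these, Proposition \ref{prop:simpleProducts} already delivers $u_B d_A = d_{A'} u_{B'}$ with $(A',B')$ one of $(A,B)$, $(A, B^+_-)$, $(A^+_-, B)$, $(A_-, B^+)$, $(A^+, B_-)$, or $(A^+_-, B^+_-)$.

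The main obstacle is the case $A\cap B=\emptyset$ with $i=p-1$ and $j=q+1$, which in Proposition \ref{prop:simpleProducts} yields $(A^+_-, B^+_-)$ — a pair \emph{not} in the list of allowed outputs claimed here. I would resolve this by observing that in this configuration $A\cup B$ is connected with $|A|+|B|=k$ or $k+1$, wrapping around the Dynkin diagram; so one can reinterpret which interval is ``$A$'' and which is ``$B$'' via Lemma \ref{lem:breakCycle} to absorb the simultaneous $+$ and $-$ adjustments into a single one-sided move. Concretely, when both endpoints are adjacent, the shift $A^+_-$ moves $A$ rigidly by reindexing, and the combined effect is equivalent to one of $(A_-, B^+)$ or $(A^+, B_-)$ after recording that the two intervals have closed into a cycle. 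Verifying that this reindexing lands in the claimed list is the delicate bookkeeping step, and I would check it by direct computation using the extended braid relation (Lemma \ref{lem:extendedBraidRelation}) on a representative, mirroring the calculation in Corollary \ref{cor:fatMove}.

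The length condition $|A'|+|B'|=|A|+|B|$ follows in every case by inspection: each of the moves $B^+_-$, $A^+_-$, etc.\ adds one element and removes one, preserving cardinality, while $(A_-, B^+)$ and $(A^+, B_-)$ trade a box between the two sets. Since the product $u_B d_A$ is reduced (it equals a nonzero element of $\nilcox_k$ whenever $j\neq q$), and $d_{A'}u_{B'}$ must have the same length, the cardinalities of the underlying sets must match, giving the claim. I would organize the final write-up by stating that all cases except the doubly-adjacent disjoint case are immediate from Proposition \ref{prop:simpleProducts}, then devote the bulk of the argument to showing the $(A^+_-, B^+_-)$ output coincides, after reindexing, with an admissible pair.
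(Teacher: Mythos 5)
There is a genuine gap, and it sits exactly where the paper's own proof does its work. The case list of Proposition~\ref{prop:simpleProducts} ($j=q$; $B\subset A$; $A\subset B$; $A\cap B=\emptyset$) is \emph{not} exhaustive for pairs of connected proper subsets of the cyclic index set, and your proposal silently treats it as exhaustive. Two cyclic intervals can overlap partially: for $k=5$, take $A=\{0,1,2\}$ and $B=\{2,3,4\}$, so $A\cap B=\{2\}\neq\emptyset$ but neither set contains the other. These overlapping configurations are precisely what the paper's proof addresses: it writes $B=(B\cap A)\cup(B\setminus(B\cap A))$, uses Lemma~\ref{lem:breakCycle} to factor $u_B=u_{B\cap A}u_{B\setminus A}$ (one factor contained in $A$, the other disjoint from $A$), and then applies Proposition~\ref{prop:simpleProducts} twice, checking the six resulting ``overlapping'' cases. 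In the example above this gives $u_Bd_A=u_{\{2\}}\bigl(u_{\{3,4\}}d_{\{0,1,2\}}\bigr)=u_{\{2\}}d_{\{0,1,2,3\}}u_{\{4\}}=d_{\{0,1,2,3\}}u_{\{3,4\}}=d_{A^+}u_{B_-}$. Your write-up never produces $A'$ and $B'$ in such cases, so it does not cover the generic relative position of two connected sets.

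The one case you do single out---disjoint $A$ and $B$ adjacent at both ends, with output $(A^+_-,B^+_-)$---is a discrepancy you were right to flag: that pair genuinely occurs and is absent from the proposition's stated list. But your proposed repair is wrong. In that configuration disjointness forces $B$ to be a single residue and $A=I\setminus B$, so $A\sqcup B=I$ and $|A|+|B|=k+1$ exactly (not ``$k$ or $k+1$''); in particular $A^+=I$ is not a proper subset, so $(A^+,B_-)$ is unavailable, and the other alternative fails as well. Concretely, for $k=2$, $B=\{0\}$, $A=\{1,2\}$: one has $u_Bd_A=a_0a_2a_1=d_{\{0,2\}}u_{\{1\}}=d_{A^+_-}u_{B^+_-}$, whereas $d_{A_-}u_{B^+}=a_2a_0a_1\neq a_0a_2a_1$ (for $k=2$ no commutation relations exist and neither word admits a braid move, so distinct words are distinct elements), and the remaining listed pairs fail similarly. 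So no ``reindexing'' absorbs $(A^+_-,B^+_-)$ into the five listed pairs; the honest conclusion is that this sixth pair must be added to the proposition's conclusion---an omission in the paper's own statement, which its proof, being concerned only with the overlapping cases, also does not address.
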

\begin{proof}
One may use Lemma~\ref{lem:breakCycle} and Proposition~\ref{prop:simpleProducts} to derive arbitrary products $u_Bd_A$ by taking $B=(B\cap A)\cup (B\setminus (B\cap A))$.  Then the proof comes down to checking six additional cases, which all work out.  These additional cases are the `overlapping' cases where $A\cap B\neq \emptyset$, but $B$ not contained in $A$ and vice versa.
\end{proof}

In particular, consider the product $w$ for $A=\{0, 1, \ldots, 1\}$ and $B_i$ connected with $|B_i|>|B_{i+1}|$ for each $i$ given by:
\[
w := u_{B_l}u_{B_{l-1}}\ldots u_{B_1} d_A = d_{A'} u_{B_l'}u_{B_{l-1}'}\ldots u_{B_1'}
\]
Then $w$ is $0$-dominant only if $u_{B_1'}u_{B_2'}\ldots u_{B_l'}$ is $0$-dominant.

\section{The $k$-Littlewood-Richardson Rule for Split\newline $k$-Schur Functions}
\label{sec:lwrule}

Our goal in this section is to prove a special case of the Littlewood-Richardson rule for $k$-Schur functions, as described in the introduction.  The proof will rely heavily on the maximal decomposition of affine permutations as well as multiplication of cyclically increasing and decreasing elements.

First, we reformulate the splitting condition for cores in terms of the sizes of rows and columns of the associated bounded partitions.

\begin{lemma}
Let $\lambda$ be a $k$-bounded partition whose associated $k+1$-core $\core(\lambda)$ splits into $k+1$-cores $\core(\mu)$ and $\core(\nu)$.  Then for any $i,j$, we have $\mu^{(k)}_i + \nu_j \geq k+1$.  
\end{lemma}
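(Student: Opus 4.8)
The plan is to translate the geometric splitting condition for the $k+1$-core $\core(\lambda)$ into a statement about the $k$-boundary, and then to read off the inequality $\mu^{(k)}_i + \nu_j \geq k+1$ from the hook-length structure along the diagonal strip separating the two components. Recall that the $k$-boundary $\partial_k(\core(\lambda))$ consists of all boxes of the core with hook length $\leq k$, and that splitting means this skew shape is disconnected, with $\core(\mu)$ and $\core(\nu)$ being (the cores whose boundaries are) two diagonally-stacked connected components. The key point is that between two connected components of the boundary there must be a box (or diagonal region) whose hook length is $\geq k+1$; this is precisely what disconnects the strip. My first step would be to make this precise: identify the box $b$ on the interface whose arm and leg straddle the gap between the component giving $\core(\mu)$ and the component giving $\core(\nu)$.

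Next I would relate the arm and leg lengths at this interface box to the row and column data of $\mu$ and $\nu$. The intended reading is that $\mu^{(k)}_i$ is a column length (a part of the $k$-conjugate, which by Proposition~\ref{prop:kconjagrees} records column heights), so it measures a vertical extent within the $\core(\mu)$ component, while $\nu_j$ is a row length measuring a horizontal extent within the $\core(\nu)$ component. Because $\core(\mu)$ sits diagonally above-and-right (or below-and-left) of $\core(\nu)$, a hook anchored at the appropriate corner box passes through $\mu^{(k)}_i$ boxes going down into one component and $\nu_j$ boxes going across into the other. The disconnectedness forces this hook to have length exceeding $k$, since any box with hook $\leq k$ would belong to the boundary and bridge the two components, contradicting that they are separate connected pieces. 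Thus $\mu^{(k)}_i + \nu_j + (\text{the corner box}) \geq k+2$, or some such accounting, which after the standard off-by-one bookkeeping of hook lengths (hook $=$ arm $+$ leg $+1$) yields $\mu^{(k)}_i + \nu_j \geq k+1$.

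The main obstacle I anticipate is the bookkeeping: correctly matching which index $i$ of $\mu^{(k)}$ and which index $j$ of $\nu$ correspond to a common hook, and verifying that the inequality holds uniformly for \emph{all} $i,j$ rather than just for the extremal pair on the interface. For the extremal pair --- the largest column of $\mu$ and the longest row of $\nu$ meeting at the diagonal gap --- the argument above is most direct. To obtain the claim for arbitrary $i,j$ I would note that the component cores are themselves genuine $k+1$-cores, so $\mu^{(k)}_i$ is weakly decreasing in $i$ and $\nu_j$ is weakly decreasing in $j$; hence it suffices to establish the inequality for the \emph{smallest} relevant parts, i.e.\ the worst case, and then monotonicity gives it for all larger parts. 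I would therefore reduce to showing $\mu^{(k)}_i + \nu_j \geq k+1$ when $i$ ranges over nonzero columns of $\mu$ and $j$ over nonzero rows of $\nu$, identify the minimal such pair, and pin the disconnection hook to that configuration.

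A cleaner alternative, which I would pursue in parallel, is to argue entirely on the level of residues and the diagonal indexing of the core. Each box of a $k+1$-core sits on a diagonal, and the splitting of the boundary corresponds to a run of $k+1$ consecutive diagonals carrying no boundary box between the two components. Translating $\mu^{(k)}_i$ and $\nu_j$ into counts of occupied diagonals on either side of this empty run, the inequality $\mu^{(k)}_i + \nu_j \geq k+1$ becomes the statement that the two components, together with the mandatory gap of width related to $k+1$, account for enough diagonals. This residue-theoretic formulation sidesteps delicate hook-length casework and is likely the route that composes most smoothly with the cyclically increasing/decreasing machinery of the preceding section, since residues are exactly the data carried by the $k$-code fillings.
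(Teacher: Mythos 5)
Your proposal is essentially the paper's own proof: the paper likewise pins down the disconnecting box (whose hook exceeds $k$, hence is at least $k+2$ since $\core(\lambda)$ is a $k+1$-core), reads its leg as the last column of $\core(\mu)$'s boundary and its arm as the top row of $\core(\nu)$'s boundary, so that the hook equals $\mu^{(k)}_m + \nu_n + 1$, and then extends to all $i,j$ by the weak decrease of partition parts. One caution: the gap hook passes through the \emph{smallest} parts $\mu^{(k)}_m$ and $\nu_n$ (as your final "pin the hook to the minimal pair" step correctly requires), so your earlier identification of the interface pair as "the largest column of $\mu$ and the longest row of $\nu$" is a slip that would break the monotonicity reduction if taken literally, and the parallel residue sketch (a run of $k+1$ empty diagonals) is not accurate --- the gap can be a single empty diagonal --- though it is also not needed.
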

\begin{proof}
Suppose $\mu^{(k)}$ has $m$ parts and $\nu$ has $n$ parts.  We show that $\mu^{(k)}_m + \nu_n \geq k+1$; the statement then holds for arbitrary $i,j$ since $\mu^{(k)}$ and $\nu$ are partitions, so that: 
\[
\mu^{(k)}_i + \nu_j \geq \mu^{(k)}_m + \nu_n \geq k+1.
\]

Diagonally stacking the cores $\core(\mu)$ and $\core(\nu)$ yields the core $\core(\lambda)$.  By pushing the $k$-boundary of $\core(\mu)$ down, we obtain the $k$-column bounded partition whose transpose is the $k$-bounded partition $\mu^{(k)}$.  Pushing the $k$-boundary of $\core(\nu)$ to the left, we obtain $\nu$.  (See Figure~\ref{fig.splitCore2} for an example.)  All of the boxes in the last column of $\core(\mu)$ have hook $\leq k$, and are thus in the boundary $\partial_k(\core(\mu))$.  Likewise for the boxes in the top row of $\core(\nu)$.  But $\core(\lambda)$ splits at a box with hook $\geq k+1$, so we have $\mu^{(k)}_m + \nu_n \geq k+1$.
\end{proof}

\begin{figure}
  \begin{center}
  \includegraphics[scale=1]{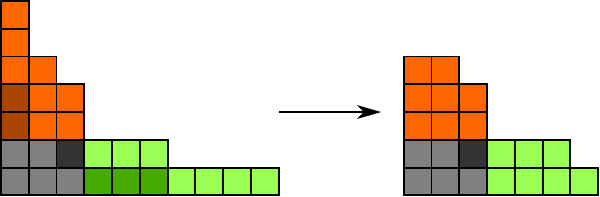}
  \end{center}
  \caption{A split $5$-core.  The core splits into the orange core $\core(\mu)$ and the green core $\core(\nu)$.  The $5$-boundary is given by the lighter-colored boxes.  Down-justifying the orange core gives a $4$-column bounded partition, and left-justifying the green partition gives a $4$-bounded partition.  The sum of any light orange column and light green row is $\geq 5$, because the dark grey box has hook $>5$.}
  \label{fig.splitCore2}
\end{figure}

Suppose $\lambda$ splits into factors $\mu$ and $\nu$.  We will express summands in $s_\mu^{(k)}$ as products of cyclically increasing elements and summands in $s_\nu^{(k)}$ as a product of cyclically decreasing elements.  
To find $k$-Littlewood-Richardson coefficients, we need to identify $0$-dominant terms in the product $s_\mu^{(k)}s_\nu^{(k)}$.  
In any product $wv$ for $w, v\in \nilcoxmon$, we have $D_R(v) \subset D_R(wv)$.  Thus, if $v$ is not $0$-dominant then the product $wv$ cannot be $0$-dominant.  Since $s_\nu^{(k)}$ has a unique $0$-dominant summand $u_\nu$, we consider products $u_{\vec{A}} u_\nu$, where $u_{\vec{A}}$ appears in $s_\mu^{(k)}$.  We then need to answer two questions:
\begin{itemize}
\item For which $\vec{A}$ is the product $u_{\vec{A}} u_\nu$ $0$-dominant?
\item Which of these $u_{\vec{A}}$ appear as summands in $s_\mu^{(k)}$?
\end{itemize}

\begin{definition}
Let $x,y \in \nilcoxmon_k$.  Then $x$ is \emph{left-compatible} with $y$, which we denote $x\vdash y$, if $xy\neq 0$ and $D_R(xy)=D_R(y)$.
\end{definition}

Then, when $\nu\neq \emptyset$, the $k$-Littlewood-Richardson coefficients may be expressed as:
\[
c^{\lambda}_{\mu, \nu} = \sum [x]s_\mu^{(k)},  
\]
where the sum is over $x \vdash u_{\nu}$ such that $x u_{\nu}=u_{\lambda}$.

\begin{lemma}
\label{lem:mainMachine}
Suppose $u_{\vec{B}} = u_{B_m} \cdots u_{B_1}$ is a maximal cyclically increasing product of shape $\lambda = (|B_1|, \ldots, |B_m|)$ and $d_{\vec{A}}=d_{A_n}\cdots d_{A_1}$ satisfies $|A_j|+|B_i|\geq k+1$ for all $i, j$.  Then $u_{\vec{B}} d_{\vec{A}}$ is $i$-dominant if and only if $d_{\vec{A}}$ is $i$-dominant and $u_{\vec{B}}$ is $(i-n)$-dominant.  

In this case, we also have:
\[
u_{B_m} \cdots u_{B_1} d_{\vec{A}} = d_{\vec{A}+m} u_{B_m+1} \cdots u_{B_1+1}.
\]
\end{lemma}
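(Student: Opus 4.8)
The plan is to reduce everything to the single-factor case already in hand: the Lemma preceding Corollary~\ref{cor:fatMove} (the dominance criterion for a lone product $u_Bd_A$) together with Corollary~\ref{cor:fatMove} (which gives $u_Bd_A=d_{A+1}u_{B+1}$ whenever that product is $i$-dominant). The engine is the per-factor dominance structure of maximal dominant cyclic products, which I would record first. Writing the $i$-dominant decreasing product as $d_{\vec{A}}=d_{A_n}\cdots d_{A_1}$, Proposition~\ref{prop:maxCyclic} and Corollary~\ref{cor:cycDecrWord} (shifted by $\Psi^i$) force $A_j=[i-j+1,\cdot]$, so the $j$-th decreasing factor from the right is $(i-j+1)$-dominant; dually, an $(i-n)$-dominant maximal increasing product $u_{\vec{B}}=u_{B_m}\cdots u_{B_1}$ has its $s$-th factor from the right $(i-n+s-1)$-dominant. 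Every application of Corollary~\ref{cor:fatMove} below needs to know the residue at which the relevant interface is dominant, so these two facts are exactly what make the moves legal. Note also that the hypothesis $|A_j|+|B_i|\ge k+1$ is shift-invariant, since $|S+t|=|S|$; hence it persists through every move and never has to be rechecked.

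For the reverse implication together with the displayed identity I would argue by two nested inductions. First, the single-$u$ push: assuming $d_{\vec{A}}$ is $i$-dominant and the lone element $u_{B_1}$ is $(i-n)$-dominant, I slide $u_{B_1}$ rightward through $d_{A_n},\dots,d_{A_1}$ one factor at a time. Just before the $(t+1)$-st move the element $u_{B_1+t}$ is $(i-n+t)$-dominant and meets $d_{A_{n-t}}$, which is $(i-n+t+1)$-dominant; the base Lemma certifies that the product is $(i-n+t+1)$-dominant and Corollary~\ref{cor:fatMove} replaces it by $d_{A_{n-t}+1}\,u_{B_1+t+1}$. After all $n$ moves this gives
\[
u_{B_1}\,d_{\vec{A}}=d_{\vec{A}+1}\,u_{B_1+n},
\]
with $u_{B_1+n}$ now $i$-dominant. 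Then I induct on $m$: applying the single-$u$ push to the rightmost factor yields $u_{\vec{B}}d_{\vec{A}}=u_{B_m}\cdots u_{B_2}\,d_{\vec{A}+1}\,u_{B_1+n}$, and $u_{B_m}\cdots u_{B_2}$ is a maximal increasing product whose rightmost factor $u_{B_2}$ is $(i-n+1)$-dominant, i.e.\ it is $((i+1)-n)$-dominant relative to the $(i+1)$-dominant block $d_{\vec{A}+1}$; the inductive hypothesis pushes it through and reassembly gives the stated factorization, an $i$-dominant element in which each decreasing factor is raised by $m$ and each increasing factor is raised by $n$. (I would observe here that the displayed exponent on the increasing factors must be $+n$, not $+1$: the shift has to equal the number $n$ of decreasing factors for the rightmost factor $u_{B_1+n}$, and hence the whole right-hand side, to be $i$-dominant; the two agree precisely when $n=1$.)

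For the forward implication the decreasing half is immediate: $d_{\vec{A}}$ is a right factor of the product, so $D_R(d_{\vec{A}})\subseteq D_R(u_{\vec{B}}d_{\vec{A}})\subseteq\{i\}$, whence $d_{\vec{A}}$ is $i$-dominant and, by the structure above, $A_j=[i-j+1,\cdot]$. The content is to see that $u_{\vec{B}}$ is forced to be $(i-n)$-dominant. Here I would generalize the window-notation/descent computation used in the forward direction of the base Lemma: treating $u_{\vec{B}}$ as a single affine permutation, I would peel the unique right descent of the product (at residue $i$) back through a reduced word, using that the descents contributed by $d_{\vec{A}}$ occupy exactly the residues $i,i-1,\dots,i-n+1$ at the bottoms of the columns of its $k$-code, and track how the corresponding values displace the entries of $u_{\vec{B}}$; the conclusion is that no right descent of $u_{\vec{B}}$ can survive except at $i-n$.

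The main obstacle is precisely this forward half. The reverse implication and the factorization are essentially bookkeeping once the per-factor dominance structure is in place, but the forward claim that $u_{\vec{B}}$ must be $(i-n)$-dominant cannot be read off from right-descent containment (which controls only suffixes) and does not follow from uniqueness of the $i$-dominant summand alone, since distinct increasing parts give distinct products. I therefore expect the delicate step to be the extension of the base Lemma's value-tracking argument to the full block $d_{\vec{A}}$, together with the subsidiary fact—needed in the reverse induction as well—that deleting the rightmost factor of a maximal increasing product again yields a maximal increasing product with the predicted per-factor dominances.
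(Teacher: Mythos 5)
Your structural preliminaries are sound: the per-factor dominance claims ($A_j=[i-j+1,\cdot]$ for an $i$-dominant maximal decreasing product, and the dual statement for $\vec{B}$) do follow from Proposition~\ref{prop:maxCyclic} and Corollary~\ref{cor:cycDecrWord}, and your observation that the displayed identity must carry the shift $+n$ on the increasing factors (not $+1$) is a genuine correction to the statement — the paper's exponent is consistent only when $n=1$, as right-descent containment applied to the right factor of the rewritten product shows. Nevertheless, the proposal has two real gaps, one acknowledged and one not.

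The acknowledged gap is the forward implication, and it is exactly where the paper does its hardest work; your sketch (peel the descent of the product through a reduced word and ``track how the corresponding values displace the entries of $u_{\vec{B}}$'') is not carried out and does not obviously close. The paper instead inducts on $m$ with a combinatorial base case: writing $A_n=[i-n+1,j]$, it shows that any connected component $L$ of $B_1$ whose right descent is not $i-n$ must avoid $\{i-n,\ldots,j\}$ entirely (a generator from that range either kills the element or, via the insertion/$k$-code machinery, creates a forbidden descent), and then a cardinality count against the hypothesis $|B_1|+|A_n|\geq k+1$ shows $B_1$ must be connected with right descent $i-n$. The inductive step uses maximality of $u_{\vec{B}}$ to refactor a shifted copy of $B_m$ into the rightmost position and invokes the base case. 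Nothing in your proposal substitutes for this counting argument, so the ``if and only if'' is not established.

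The unacknowledged gap is in your reverse direction. The pairwise certificates (the base Lemma applied to each adjacent pair $u_{B_1+t}d_{A_{n-t}}$, followed by Corollary~\ref{cor:fatMove}) make each local rewriting legal and hence do prove the factorization identity, but they do not prove that the full multi-factor product is nonzero and $i$-dominant. In the nil-Coxeter monoid a product can vanish even when all consecutive pairs of factors are nonzero (e.g.\ $a_1\cdot a_2\cdot a_1a_2=0$ while $a_1a_2\neq 0$ and $a_2a_1a_2\neq 0$), and $D_R(xy)\supseteq D_R(y)$ bounds descents only from below; ending with $d_{\vec{A}+m}\Psi^n(u_{\vec{B}})$, an $(i+m)$-dominant element times an $i$-dominant one, yields $\{i\}\subseteq D_R$ rather than $D_R\subseteq\{i\}$. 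The paper closes this by a completely different route: it diagonally stacks the cores $\core(\lambda^{(k)})$ and $\core(\mu)$ — the hypothesis $|A_j|+|B_i|\geq k+1$ is what makes the stack a $k+1$-core splitting into these components — and identifies $\Psi^{-i}(u_{\vec{B}}d_{\vec{A}})$ with the canonical Grassmannian element of the stacked core, so nonvanishing and dominance come for free. Your push-through scheme would need a further argument (for instance, tracking $\RD$ through Theorem~\ref{thm:genPieri} and Corollary~\ref{cor:descents}, or simply quoting the core-stacking identification) before the phrase ``an $i$-dominant element'' in your reassembly step is justified.
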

\begin{proof}
For the reverse direction, let $\lambda=\sh(\vec{B})$ and $\mu=\sh(\vec{A})$ be the bounded partitions associated to the elements $u_{\vec{B}}$ and $d_{\vec{A}}$.  Then diagonally stacking the $k+1$-cores $\core(\lambda^{(k)})$ and $\core(\mu)$ yields a $k+1$-core $\nu$ that splits into $\lambda^{(k)}$ and $\mu$.  The $0$-dominant element associated to $\nu$ is equal to $\Psi^{(-i)}(u_{\vec{B}} d_{\vec{A}})$, and so we see that the product is $i$-dominant.

The forward direction is more complicated.  We see immediately that $d_{\vec{A}}$ must be $i$-dominant in order for the product to be $i$-dominant.  Thus, we induct on $m$, the number of parts of $\vec{B}$.

For the base case, $\vec{B}=B_1$, and our assumption is $u_{B_1}\vdash d_{\vec{A}}$.  Since $d_{\vec{A}}$ is $i$-dominant, we have $A_n$ connected, so let $A_n=[i-n+1, j]$.  (The features of the base case are illustrated in Figure~\ref{fig.example-mainBaseCase}.)

Let $L=[p,q]$ be any connected component of $B_1$ with $D_R(L) \neq \{i-n\}$.  (We will argue by contradiction to show that no such $L$ can exist.)  We observe that $D_R(u_{B_1})\cap\{i-n+1, \ldots, j\}=\emptyset$, or else the product is zero or non-dominant.  Thus, $q\not \in \{i-n+1, \ldots, j\}$, so that $q\in [j+1, i-n-1]$.  

In fact, we claim that $L\cap \{i-n, \ldots, j\}=\emptyset$.  Otherwise, $L$ must contain $j$, as $u_L$ is an increasing product.  If $j\in L$, set $d_{\vec{A'}}$ be the maximal decomposition of $a_{j+1}\cdots a_q d_{\vec{A}}$.  Since $[j+1,q]\cap \{i-n, \ldots, j\}=\emptyset$, $\vec{A'}$ has the same number of parts as $\vec{A}$.  Then we consider $a_jd_{\vec{A'}}$.  If $j+1\in A'_{m}$, multiplication by $a_j$ starts a new row in the $k$-code $\RD(d_{\vec{A'}})$, which creates a new right descent.  If $j+1\not \in A'_m$, then $a_jd_{\vec{A'}}=0$, since $j\in D_R(d_{A_m})$.  Thus, $j\not \in L$, and so $L\cap \{i-n, \ldots, j\}=\emptyset$.

If there is no connected component of $B_1$ with right descent $i-n$, we then have $B_1\subset I \setminus \{i-n, \ldots, j\}$.  But there are $|A_n|+1$ elements in $\{i-n, \ldots, j\}$, so that $|B_1|\leq k+1 - (|A_n|+1)$, so that $|B_1|+|A_n|\leq k$, contradicting the assumption that $|B_1| + |A_n| \geq k+1$.

On the other hand, suppose $B_1$ has a connected component $C$ with right descent $i-n$.  Then $C=\{r, \ldots i-n-1, i-n\}$.  If there are other connected components, we know that they are contained in $[j+1, i-n-1]$.  Since $C$ contains $i-n$, no other connected component may contain any elements in $[r-1, i-n+1]$, or else that component would be connected to $C$.  As a result, if $C\cap [i-n+1, j]\neq \emptyset$ there can be no other connected components.  Thus, all other components are subsets of $[j+1, r-2]$.  
But if there are other connected components, we then have $|C \cup [j+1, r-2] \cup A_n| = k$, so that $|B|+|A_n|<k+1$, contrary to assumption.

Thus, there are no connected components of $B_1$ with right descent other than $(i-n)$.  As a result, $B_1$ is $(i-n)$-dominant, as desired.
(For an example, see Figure~\ref{fig.example-mainBaseCase}.)

\begin{figure}
  \begin{center}
  \includegraphics[scale=1.25]{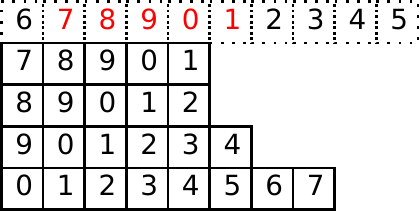}
  \end{center}
  \caption{An example with $k=9$ for the base case of Lemma~\ref{lem:mainMachine}.  On the left is the $k$-code is for a $0$-dominant element $d_{\vec{A}}$ of length $24$; we consider a $0$-dominant product $u_Bd_{\vec{A}}$ where $|B|\geq 5$.  Left-multiplying by any of the generators in $A_n=\{a_7, a_8, a_9, a_0, a_1\}$ will either create a descent or kill the element.  Thus, elements in $A_n$ cannot be right descents of $u_B$, and by the argument in the proof, cannot be in any connected component of $B$ whose right descent is not $6$.  \newline
  On the right is the Dynkin diagram with $k=9$.  The set $A_n$ is highlighted in red, and an increasing subset $C$ with right descent $6$ is highlighted in cyan.  The proof for the base case argues that any other connected component of $B$ would then have to be contained in the set $\{2, 3\}$, but then $|B|+|A_n|\leq 9$, contradicting the assumption that $|B|+|A_n|\geq 10=k+1$.  We may then observe that $B$ must be connected with right descent 6.}
  \label{fig.example-mainBaseCase}
\end{figure}


For the inductive step, we assume that $x=u_{B_{m-1}}\cdots u_{B_1}$ is $(i-m)$-dominant and $u_{\vec{B}}$ maximal.  Set $\vec{B}^- =\{B_1, \ldots, B_{m-1}\}$. We consider $D_R(u_{B_m})$.  Maximality of $\vec{B}$ implies that each $B_n$ is a subset of $B_{n-1}+1$.

We observe that if $u_{B_m}$ is not connected with right descent $i-n+m$, there exists an alternate factorization $x=u_{\vec{B'}}$ with 
$B_1'=\{j-(m-1) \mid j \in B_m\}$.  This occurs because each connected component of $B_m$ contributes a right descent to $u_{\vec{B}}$.  Factoring out this right descent on the right leaves the next element in the connected component, and so on.  This $B_1'$ must have $u_{B_1'}d_{\vec{A}}$ $0$-dominant, or else $u_{\vec{B}}d_{\vec{A}}$ will not be zero-dominant.  Then by the base case, this $B_1'$ must be connected with right descent $i-n$.  But this means that $B_m$ was connected with right descent $i-n+m$.

\begin{figure}
  \begin{center}
  \includegraphics[scale=1]{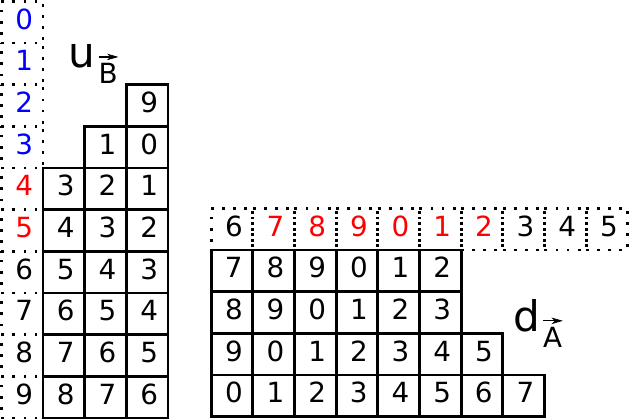}
  \end{center}
  \caption{An example with $k=9$ for the induction step of Lemma~\ref{lem:mainMachine}.  The $k$-code on the left represents the element $u_{\vec{B}}$ and the $k$-code on the right is for an element $d_{\vec{A}}$; consider a $0$-dominant product $u_Bu_{\vec{B}}d_{\vec{A}}$ where $|B|\geq 5$.  Left-multiplying $d_A$ by any of the generators in $S=\{a_7, a_8, a_9, a_0, a_1\}$ will either create a descent or kill the element.  $B$ cannot contain any of the blue residues or else maximality of $u_Bu_{\vec{B}}$ will be violated.  If $a_5$ is a right descent of $u_B$ then $a_2$ will be a right descent of $u_Bu_{\vec{B}}$, and is thus disallowed (and colored red in the diagram).  If $B_m$ is disconnected, (for example, set $B=\{4,5,6,8,9\}$), we can re-express the element as $u_{\vec{B}}=xu_{1,2,3,5,6}$.  But this violates the base case.  Then since $|B|\geq 5$, $B$ must be connected with $D_R(u_B)=\{9\}$.}
  \label{fig.example-mainInduct}
\end{figure}
\end{proof}

\begin{lemma}
\label{lem:mainResult}
Suppose a $k$-bounded partition $\lambda$ splits into two components, $\mu$ and $\nu$.  Then $s_{\mu}^{(k)}s_\nu^{(k)}=s_\lambda^{(k)}$.
\end{lemma}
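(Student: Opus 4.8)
The plan is to prove the equivalent statement that the product $s_\mu^{(k)}s_\nu^{(k)}$ has a single $0$-dominant summand, namely $w_\lambda$, occurring with multiplicity one. This suffices because the $k$-Schur functions form a basis of $\Lambda^{(k)}$, and each $s_\kappa^{(k)}$ contains $w_\kappa$ as its unique $0$-dominant summand with distinct $k$-bounded partitions giving distinct $w_\kappa$; hence $[w_\kappa](s_\mu^{(k)}s_\nu^{(k)})=c^\kappa_{\mu,\nu}$, and the identity $s_\mu^{(k)}s_\nu^{(k)}=s_\lambda^{(k)}$ is exactly the assertion $c^\kappa_{\mu,\nu}=\delta_{\kappa\lambda}$. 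I would compute these coefficients through the expression for the $k$-Littlewood--Richardson coefficients derived above, $c^\kappa_{\mu,\nu}=\sum[x]s_\mu^{(k)}$ summed over $x\vdash w_\nu$ with $xw_\nu=w_\kappa$, using that $w_\nu$ is the unique $0$-dominant summand of $s_\nu^{(k)}$ (so that, since $D_R(v)\subseteq D_R(xv)$, no other summand of $s_\nu^{(k)}$ can yield a $0$-dominant product). The case $\nu=\emptyset$ is trivial, so I assume $\nu\neq\emptyset$.

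Next I would fix the two decompositions that feed Lemma~\ref{lem:mainMachine}. By Corollary~\ref{cor:cycDecrWord} write $w_\nu=d_{\vec A}$ as its maximal cyclically decreasing product, whose $n=\ell(\nu)$ blocks satisfy $|A_j|=\nu_j$; and write each relevant summand $x$ of $s_\mu^{(k)}$ as a maximal cyclically increasing product $u_{\vec B}$, whose blocks have sizes $\mu^{(k)}_i$ in the dominant case by Proposition~\ref{prop:kconjagrees}. The splitting hypothesis enters through the preceding lemma: since $\core(\lambda)$ splits into $\core(\mu)$ and $\core(\nu)$, we have $\mu^{(k)}_i+\nu_j\geq k+1$ for all $i,j$, which is precisely the size condition $|B_i|+|A_j|\geq k+1$ needed to apply Lemma~\ref{lem:mainMachine}.

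Both conclusions then follow from the two directions of Lemma~\ref{lem:mainMachine} taken with $i=0$. For \emph{existence}, let $x_0=\Psi^{-n}(w_\mu)$ be the unique $(-n)$-dominant summand of $s_\mu^{(k)}$ supplied by Corollary~\ref{cor:shiftDominance}; it shares the increasing shape $\mu^{(k)}$ of $w_\mu$, so the size condition holds, and the reverse direction of Lemma~\ref{lem:mainMachine}, through its core-stacking description, identifies $x_0w_\nu$ with the $0$-dominant element of the core obtained by diagonally stacking $\core(\mu)$ and $\core(\nu)$, which is exactly $w_\lambda$. Thus $x_0\vdash w_\nu$ and $x_0w_\nu=w_\lambda$, and since $[x_0]s_\mu^{(k)}=1$ this gives $c^\lambda_{\mu,\nu}\geq 1$. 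For \emph{uniqueness}, the forward direction of the ``iff'' in Lemma~\ref{lem:mainMachine} shows that any summand $x=u_{\vec B}$ of $s_\mu^{(k)}$ for which $xw_\nu$ is $0$-dominant must itself be $(-n)$-dominant, hence equal to $x_0$ by the uniqueness clause of Corollary~\ref{cor:shiftDominance}. Combining the two directions yields $c^\lambda_{\mu,\nu}=1$ and $c^\kappa_{\mu,\nu}=0$ for $\kappa\neq\lambda$, which is the claimed identity.

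The main obstacle is the \emph{shape control} required to invoke the forward direction for every summand $x$ of $s_\mu^{(k)}$, not just the dominant one: Lemma~\ref{lem:mainMachine} presumes $|B_i|+\nu_j\geq k+1$, and for an arbitrary summand the increasing block sizes $|B_i|$ need not visibly be as large as $\mu^{(k)}_i$. The crucial sub-claim I would isolate and prove is that every summand $u_{\vec B}$ of $s_\mu^{(k)}$ satisfies $\min_i|B_i|\geq\mu^{(k)}_{\text{last}}$, so that the splitting bound $\mu^{(k)}_{\text{last}}+\nu_{\text{last}}\geq k+1$ forces the size condition uniformly; an alternative is to argue directly that any summand violating the size bound multiplies $w_\nu$ either to $0$ or to a non-$0$-dominant element, using the vanishing and descent-creating cases of Proposition~\ref{prop:simpleProducts}. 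A secondary, purely bookkeeping point is to track the Dynkin shift by $n=\ell(\nu)$ and to confirm that $\Psi^{-n}(w_\mu)$ occurs in $s_\mu^{(k)}$ with coefficient exactly one.
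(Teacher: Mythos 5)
Your setup is sound and, up to the point you flag, matches the paper's own strategy: reduce to counting $0$-dominant summands via $c^{\kappa}_{\mu,\nu}=\sum[x]s_\mu^{(k)}$ over $x\vdash w_\nu$, get existence from the reverse (core-stacking) direction of Lemma~\ref{lem:mainMachine}, and hope to get uniqueness from its forward direction. But the ``main obstacle'' you identify at the end is not a bookkeeping point --- it is the entire content of the paper's proof --- and both of your proposed ways to close it fail. The sub-claim that every summand $u_{\vec{B}}$ of $s_\mu^{(k)}$ satisfies $\min_i|B_i|\geq$ (smallest part of $\mu^{(k)}$) is false. Take $k=4$ and $\mu=(2,2)$, so $\mu^{(k)}=(2,2)$, and let $x$ be the (classical) permutation with window $[2,4,1,5,3]$, viewed in $\AS_5$. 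Its maximal cyclically decreasing decomposition is $d_{\{1,3\}}d_{\{2,4\}}$, of shape $(2,2)$, so $[x]s_{(2,2)}^{(4)}=1$ by Theorem~\ref{thm:maxhCoefficient}; yet its maximal cyclically increasing decomposition is $u_{\{3\}}u_{\{1,2,4\}}$, of shape $(3,1)$, which has a block of size $1<2$. Lexicographic maximality of $\RI(x)$ only bounds the largest blocks from below, never the smallest, so no dominance or triangularity argument will rescue this claim.

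Your fallback --- that any summand violating the size bound multiplies $w_\nu$ to $0$ or to a non-$0$-dominant element --- is also untrue, and the paper's proof shows exactly why: Claim 3 of that proof concerns elements $x=u_Ca_\rho$ with $x\vdash a_\nu$, i.e.\ elements whose product with $a_\nu$ \emph{is} $0$-dominant, but whose maximal decomposition $u_{C'}u_{\vec{B'}}$ has a top block $C'$ too small for the hypotheses of Lemma~\ref{lem:mainMachine}. Such elements cannot be eliminated by looking at the multiplication; they must be eliminated by showing their \emph{coefficient} in $s_\mu^{(k)}$ vanishes. That is what the paper's induction on the number of parts of $\mu^{(k)}$ accomplishes: writing $e_l s_\rho^{(k)}=s_\mu^{(k)}+\sum_\kappa s_\kappa^{(k)}$ by the Pieri rule, it shows (Claims 1--2, using the inductive hypothesis applied to $\rho$) that every $x\vdash a_\nu$ in this product has the form $u_Ca_\rho$ and satisfies $[x]e_ls_\rho^{(k)}=1$, and then (Claim 3, via Theorem~\ref{thm:genPieri} and Theorem~\ref{thm:maxhCoefficient}) that when $u_Cu_{\vec{B}}$ is not maximal, this single occurrence is already absorbed by $s_{\gamma^+}^{(k)}$, where $\gamma^+$ is the shape of the maximal decomposition, forcing $[x]s_\mu^{(k)}=0$. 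Only after this does Lemma~\ref{lem:mainMachine} apply to the surviving summands. This inductive coefficient accounting is the missing idea in your proposal, and without it your uniqueness argument does not go through.
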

\begin{proof}

We induct on the number of parts in $\mu$, considering elements of $\nilcoxmon_k$ appearing with non-zero coefficient in $s_{\mu}^{(k)}$ as products of cyclically increasing elements, and those appearing in $s_{\nu}^{(k)}$ as decreasing elements.  Then any $x$ with $[x]s_{\mu}^{(k)}\neq 0$ has a cyclically increasing expansion $x=u_{B_m}\cdots u_{B_1}$ where $|B_i|=\mu_i^{(k)}$.

Let $a_\nu$ denote the unique $0$-dominant term in $s_{\nu}^{(k)}$, with maximal cyclically decreasing product $a_\nu=d_{A_n}\cdots d_{A_1}$.  Then we claim that there exists a unique summand $x$ in $s_{\mu}^{(k)}$ such that $x\vdash a_\nu$, and furthermore that $x$ is $(k+1-n)$-dominant.  By the splitting condition we have $|B_i|+|A_j|\geq k+1$ for all $i, j$.  To do this, we will induct on the number of parts of $\mu^{(k)}$.

For the base case, $\mu^{(k)}$ has a single part, so that $s_{(l)}^{(k)}=e_{l} = \sum_{|A|=l} u_A$.  In this case, every summand is maximal.  Then by Lemma~\ref{lem:mainMachine}, if $x\vdash a_\nu$, then $x$ is $(k+1-n)$-dominant.  There is a unique such element in $s_{(l)}^{(k)}$, so the product $s_{(l)}^{(k)}s_\lambda^{(k)}$ has a single $0$-dominant summand, as desired.

For the induction step, we suppose the statement holds for any $k$-bounded partition $\rho$ with $\leq m$ parts.  Let $\mu^{(k)} = \rho \cup (l)$ be a $k$-bounded partition with $m+1$ parts.  We consider the product:
\[
e_l s_\rho^{(k)} = s_\mu^{(k)} + \sum_\kappa s_\kappa^{(k)},
\]
according to the Pieri rule.  We recall that each $\kappa \succ \mu$.  Finally, let $a_\rho$ be the unique $(k+1-n)$-dominant summand in $s_\rho^{(k)}$.

Claim 1: For any $x$ a summand in $e_l s_\rho^{(k)}$, if $x\vdash a_\nu$ then $x=u_Ca_\rho$ with $|C|=l$.

We are interested in $0$-dominant elements in the product $e_l s_\rho^{(k)} s_\nu^{(k)}$.  Recall that for any elements $p,q$ in a Coxeter group, we have $D_R(q)\subset D_R(pq)$.  Then any $0$-dominant term in $s_\rho^{(k)} s_\nu^{(k)}$ is of the form $p a_\nu$ for some $p$.  But by the inductive hypothesis, we see that the only non-zero summand $p$ in $s_\rho^{(k)}$ with $p\vdash a_\nu$ is $a_\rho$.  The claim then follows immediately.


Claim 2: For any $x\vdash a_\nu$, we have 
\[
[x] e_l s_\rho^{(k)} = 1.
\]

By first claim, we have $x=u_Ca_\rho$.  If the coefficient were greater than $1$, we would have a second decomposition $u_Da_\rho$.  But then $u_C=u_D$.

Claim 3: Let $x=u_Ca_\rho \vdash a_\nu$, and $a_\nu:=u_{\vec{B}}$.  Then if $u_Cu_{\vec{B}}$ is not maximal, then $[x] s_\mu^{(k)} = 0$.

By the results of Section~\ref{sec:kcastles}, $x$ has a unique maximal decomposition $x=u_{C'}u_{\vec{B'}}$, where $\vec{B'}$ has the same number of parts as $\vec{B}$ and $|C'|\leq|C|$.  By the inductive hypothesis, $u_{\vec{B'}}$ is then $(k+1-n)$-dominant; let it be of shape $\gamma$.  
By Proposition~\ref{prop:containment}, we have $\rho \subset \gamma$.  Set $\gamma^+=\gamma \cup (|C'|)$, which is $\sh(x)$.

By Theorem~\ref{thm:genPieri}, $\gamma^+\setminus \rho$ is a weak strip, so $\gamma^+$ appears in the Pieri rule expansion of $e_ls_\rho^{(k)}$.  Furthermore, $[x]s_{\gamma^+}^{(k)}=1$, by Theorem~\ref{thm:maxhCoefficient}.  
Then we observe that: 
\begin{eqnarray*}
1 &=& [x] e_l s_\rho^{(k)} \\
  &=& [x] (s_\mu^{(k)} + s_{\kappa^+}^{(k)} + \sum_\kappa s_\kappa^{(k)})\\ 
  &=& [x]s_\mu^{(k)} + [x]s_{\kappa^+}^{(k)}+ \sum_\kappa [x]s_\kappa^{(k)}.
\end{eqnarray*}
All of these coefficients are $\geq 0$, and $[x]s_{\kappa^+}^{(k)}=1$, so $[x]s_\mu^{(k)}=0$.  

Thus, when $x\vdash a_{\nu}$ and $[x]s_\mu^{(k)}>0$, we have $x=u_Cu_{\vec{B}}$ is maximal.  Then by Lemma~\ref{lem:mainMachine}, $x$ is $(k+1-n)$-dominant.  There is a unique such element in $s_\mu^{(k)}$, which completes the proof.
\end{proof}

\begin{theorem}
\label{thm:mainResult}
Suppose $\lambda$ splits into components $\mu_i$.  Then 
\[
s_\lambda^{(k)} = \prod s_{\mu_i}^{(k)}.
\]
\end{theorem}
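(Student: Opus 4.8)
The plan is to reduce the general statement to the two-factor case already established in Lemma~\ref{lem:mainResult}, inducting on the number of components $r$ into which $\lambda$ splits. The base case $r=1$ is immediate, since then $\lambda = \mu_1$ and there is nothing to prove. For the inductive step I would peel off a single component along the diagonal, say the extreme one $\mu_1$, and collect the remaining components $\mu_2, \ldots, \mu_r$ into a single factor $\nu$, as permitted by Definition~\ref{def:splits}. Because the components are diagonally stacked, the $\mu_2, \ldots, \mu_r$ form a contiguous sub-stack, so $\core(\lambda)$ is the diagonal stacking of $\core(\mu_1)$ and $\core(\nu)$, and $\lambda$ splits into the two factors $\mu_1$ and $\nu$.

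The key observation is that Lemma~\ref{lem:mainResult}, although phrased in terms of ``two components,'' really operates at the level of a core splitting into two diagonally stacked cores $\core(\mu)$ and $\core(\nu)$: its proof invokes only the inequality $\mu^{(k)}_i + \nu_j \geq k+1$ arising from the disconnectedness of the $k$-boundary, which holds verbatim when $\nu$ is a factor rather than a single component. Applying the lemma to the splitting of $\lambda$ into $\mu_1$ and $\nu$ therefore yields
\[
s_\lambda^{(k)} = s_{\mu_1}^{(k)}\, s_\nu^{(k)}.
\]

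It remains to expand $s_\nu^{(k)}$. Since $\nu$ splits into the $r-1$ components $\mu_2, \ldots, \mu_r$, the inductive hypothesis gives $s_\nu^{(k)} = \prod_{i=2}^{r} s_{\mu_i}^{(k)}$. Substituting, and using that the $k$-Schur functions all lie in the commutative ring $\Lambda^{(k)}$ (so that the product is independent of the order of its factors), I obtain
\[
s_\lambda^{(k)} = s_{\mu_1}^{(k)} \prod_{i=2}^{r} s_{\mu_i}^{(k)} = \prod_{i=1}^{r} s_{\mu_i}^{(k)},
\]
which completes the induction.

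I expect the only real obstacle to be the bookkeeping in the inductive step: one must check that grouping $\mu_2, \ldots, \mu_r$ into a factor $\nu$ genuinely realizes $\lambda$ as a two-factor split (so that Lemma~\ref{lem:mainResult} applies) and simultaneously realizes $\nu$ as an $(r-1)$-factor split (so that the inductive hypothesis applies). Both reduce to the elementary fact that diagonal stacking of cores is associative, so that the connected components of $\partial_k(\core(\lambda))$ partition cleanly between $\mu_1$ and the sub-stack $\nu$; peeling off an \emph{extreme} component rather than an interior one is what guarantees that the remainder is a single factor.
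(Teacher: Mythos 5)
Your proof is correct and is essentially the paper's own argument: the paper's proof of Theorem~\ref{thm:mainResult} consists of the single line ``This follows from successive application of Lemma~\ref{lem:mainResult},'' which is exactly the induction you spell out. Your explicit handling of the component-versus-factor issue (grouping $\mu_2,\ldots,\mu_r$ into a single factor $\nu$ and noting that Lemma~\ref{lem:mainResult} only needs the inequality $\mu^{(k)}_i+\nu_j\geq k+1$) is precisely the bookkeeping the paper leaves implicit, and it is the reason Definition~\ref{def:splits} introduces the notion of a factor ``in anticipation of the main result.''
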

\begin{proof}
This follows from successive application of Lemma~\ref{lem:mainResult}.
\end{proof}

\bibliographystyle{amsalpha}

\begin{thebibliography}{LLM{\etalchar{+}}12}

\bibitem[BB93]{billeyBergeron}
Nantel Bergeron and Sara Billey, \emph{Rc-graphs and schubert polynomials},
  Experiment. Math. \textbf{2} (1993), no.~4, 257--269.

\bibitem[BB05]{Bjorner_Brenti.2005}
Anders Bj{\"o}rner and Francesco Brenti, \emph{Combinatorics of {C}oxeter
  groups}, Graduate Texts in Mathematics, vol. 231, Springer, New York, 2005.
  \MR{MR2133266 (2006d:05001)}

\bibitem[BBTZ11]{bbtz2011}
Chris Berg, Nantel Bergeron, Hugh Thomas, and Mike Zabrocki, \emph{Expansion of
  $k$-schur functions for maximal $k$-rectangles within the affine nilcoxeter
  algebra}, Preprint: \url{http://arxiv.org/abs/1107.3610}.

\bibitem[FG98]{FominGreene98}
Sergey Fomin and Curtis Greene, \emph{Noncommutative schur functions and their
  applications}, Discrete Mathematics \textbf{193} (1998), no.~1–3, 179 --
  200.

\bibitem[Hum90]{humphreys90}
James~E. Humphreys, \emph{Reflection groups and {C}oxeter groups}, Cambridge
  Studies in Advanced Mathematics, vol.~29, Cambridge University Press,
  Cambridge, 1990. \MR{MR1066460 (92h:20002)}

\bibitem[Lam06]{Lam06affinestanley}
Thomas Lam, \emph{Affine stanley symmetric functions}, Amer. J. Math
  \textbf{128} (2006), 1553--1586.

\bibitem[LLM03]{LLM2003}
Luc Lapointe, Alain Lascoux, and Jennifer Morse, \emph{Tableau atoms and a new
  macdonald positivity conjecture}, Duke Math. J. \textbf{116} (2003),
  103--146.

\bibitem[LLM{\etalchar{+}}12]{kschurprimer}
Thomas Lam, Luc Lapointe, Jennifer Morse, Anne Schilling~Mark Shimozono, and
  Mike Zabrocki, \emph{A primer on $k$-schur functions}, 2012, In preparation.

\bibitem[LM03]{lapointeMorse2003}
Luc Lapointe and Jennifer Morse, \emph{Tableaux on $k+1$-cores, reduced words
  for affine permutations, and $k$-schur expansions}, J. Combinatorial Theory,
  Ser. A \textbf{112} (2003), 44--81.

\bibitem[LM07]{lapointeMorse2007}
\bysame, \emph{A $k$-tableau characterization of $k$-schur functions}, Adv.
  Math. \textbf{213} (2007), 183--204.

\bibitem[Lus83]{lusztig83}
G.~Lusztig, \emph{Some examples of square integrable representations of
  semisimple $p$-adic groups}, Trans. Amer. Math. Soc. (1983), 623--653.

\bibitem[S{\etalchar{+}}09]{Sage}
W.\thinspace{}A. Stein et~al., \emph{{S}age {M}athematics {S}oftware ({V}ersion
  3.3)}, The Sage~Development Team, 2009, {\tt http://www.sagemath.org}.

\bibitem[SCc09]{Sage-Combinat}
The {S}age-{C}ombinat community, \emph{{S}age-{C}ombinat: enhancing sage as a
  toolbox for computer exploration in algebraic combinatorics}, 2009, {\tt
  http://combinat.sagemath.org}.

\end{thebibliography}

\newcommand{\etalchar}[1]{$^{#1}$}
\providecommand{\bysame}{\leavevmode\hbox to3em{\hrulefill}\thinspace}
\providecommand{\MR}{\relax\ifhmode\unskip\space\fi MR }
\providecommand{\MRhref}[2]{%
  \href{http://www.ams.org/mathscinet-getitem?mr=#1}{#2}
}
\providecommand{\href}[2]{#2}

\end{document}